\tikzset{snake it/.style={decorate, decoration=snake}}
\newcommand{\bb}{\mathbb}
\newcommand{\BB}{\mathcal{B}}
\newcommand{\Cal}{\mathcal}
\newcommand{\Z}{\mathbb{Z}}
\newcommand{\R}{\mathbb{R}}
\newcommand{\C}{\mathbb{C}}
\newcommand{\D}{\mathbb{D}}
\newcommand{\EE}{\mathcal{E}}
\newcommand{\GG}{\mathcal{G}}
\newcommand{\HH}{\mathbb{H}}
\newcommand{\F}{\mathcal{F}}
\newcommand{\LL}{\mathcal{L}}
\newcommand{\dbar}{\bar\partial}
\newcommand{\del}{\partial}
\newcommand{\Lag}{\mathcal{L}\mathit{ag}}
\newcommand{\CM}{\mathcal{M}}
\newcommand{\CW}{\mathcal{CW}}
\newcommand{\Moore}{\mathcal{P}}
\newcommand{\tLag}{\widetilde{\Lag}}
\newcommand{\bfD}{\mathbf{D}}
\newcommand{\ro}{{\mathrm o}}
\newcommand{\RR}{\mathcal{R}}
\newcommand{\sS}{\mathcal{S}}
\newcommand{\RRbar}{\overline{\RR}}
\newcommand{\UU}{\mathcal{U}}
\newcommand{\ZZ}{\mathcal{Z}}
\newcommand{\ZZbar}{\overline{\mathcal{Z}}}
\newcommand{\NN}{\mathcal{N}}
\newcommand{\sH}{\mathcal{H}}
\newcommand{\JJ}{\mathcal{J}}
\newcommand{\Disc}{{\mathcal M}}
\newcommand{\Discbar}{\overline{\CM}}
\newcommand{\Disct}{\tilde{\mathcal M}}
\newcommand{\Wrap}{\mathcal{W}}
\newcommand{\Chord}{\mathcal{X}}
\newcommand{\vsigma}[1][\!]{\vec{\sigma}^{\, #1}}
\newcommand{\vtau}[1][\!]{\vec{\tau}^{\, #1}}
\newcommand{\into}{\hookrightarrow}
\newcommand{\iso}{\cong}
\newcommand{\RNum}[1]{\uppercase\expandafter{\romannumeral #1\relax}}
\newcommand{\di}{\operatorname{dim}}
\newtheorem{thm}{Theorem}[section]
\newtheorem{cor}[thm]{Corollary}
\newtheorem{lem}[thm]{Lemma}
\newtheorem{prop}[thm]{Proposition}
\newtheorem{defin}[thm]{Definition}
\theoremstyle{remark}
\newtheorem{rem}[thm]{Remark}
\numberwithin{equation}{section}
\numberwithin{mytheorem}{subsection}
\begin{document}


\title{On Wrapped Fukaya Category and the loop space of Lagrangians in a Liouville manifold}
\author{Alex Zhongyi Zhang}

\maketitle


\section{Abstract}
	We introduce an $A_\infty$ map from the cubical chain complex of the based loop space of Lagrangian submanifolds with Legendrian boundary in a Liouville Manifold $C_{*}(\Omega_{L} \Lag)$ to wrapped Floer cohomology of Lagrangian submanifold $\CW^{-*}(L,L)$. In the case of a cotangent bundle and a Lagrangian co-fiber, the composition of our map with the map from $\CW^*(L,L) \to C_{*}(\Omega_q Q) $ as defined in \cite{Ab12} shows that this map is split surjective.

\setcounter{tocdepth}{1}



\section{Introduction}

Originally arising from attempts to understand the geometry of Hamiltonian mechanics, symplectic geometry is the study of even dimensional manifolds that are equipped with a non-degenerate $2$-form. The canonical model of a symplectic manifold is the cotangent bundle of a smooth manifold, where the symplectic form arises from pairing tangent and cotangent vectors.
The study of symplectic geometry underwent a revolution after Gromov \cite{Gromov85} introduced pseudo-holomorphic curves. Soon after, Floer introduced his famous homology theory to solve Arnold's conjecture on fixed points of Hamiltonian systems and Fukaya changed the understanding of symplectic structures by introducing Fukaya Categories, whose objects are compact Lagrangians and whose morphisms are generated by their intersections; one key discovery is that the operations in this category are encoded in holomorphic discs.

One of the motivating problems of symplectic topology is Arnold's nearby Lagrangian conjecture: any closed exact Lagrangian in a cotangent bundle is Hamiltonian isotopic to the zero section.
Work related to this conjecture has spanned over two decades: in \cite{Vit94},\cite{Vit96} Viterbo proved that the Floer homology of the cotangent bundle of a closed manifold  (a symplectic object) is isomorphic to the cohomology of the loop space of the zero section (a topological object). Abbondandolo and Schwarz showed the existence of a chain level isomorphism from the Morse homology of the based loop space of a closed manifold $H_*(\Omega_q Q)$ to the Lagrangian version of Hamiltonian Floer homology of the cotangent bundle $T^*Q$ \cite{AbSc10}, and that this map preserves the ring structures in that there is a correspondence between the Chas-Sullivan loop product and pair-of-pants product. This version of Floer homology is now known as the wrapped Floer homology, which was developed by Abouzaid and Seidel, who used wrapping to give control at infinity. This approach enlarged the Fukaya category to allow certain non-compact exact Lagrangians, such as the cotangent fibers $T_q^*Q$, where the morphisms given by two Lagrangians are given by Floer Homology modified by a large Hamiltonian Perturbation at infinity; 
Abouzaid further showed that the wrapped Fukaya category $\CW(T^*Q)$ is generated by a single Lagrangian cofiber $T_q^*Q$ \cite{Abcotangen}.
Building on the correspondence between the Morse homology of loop space and symplectic cohomology of cotangent bundles, in \cite{Ab12} Abouzaid showed that $\CW^*(T_q^*Q, T_q^*Q)$ is quasi-isomorphic as an $A_\infty$-algebra to $C_{-*}(\Omega_q Q, \eta)$, the chains of based loop space of the zero section twisted by a canonical local system, which is a dg-algebra under the concatenation of loops. This is the inverse to the map constructed in \cite{AbSc10} on the level of homology. Recently Kragh also showed homotopy equivalence by studying directly the geometry of an exact Lagrangian through intersection with cotangent fiber \cite{Kragh}. 

We would like to understand the relations between the wrapped Floer homology of any open Liouville manifold $M$ and the loop space of exact Lagrangians inside.

In order to understand the topological structure of the space $\Lag$ of exact Lagrangian submanifolds with Legendrian boundary of a Liouville manifold, whose homotopic information can be obtained from its based loop space, we construct an $A_\infty$ functor between the\textit{ Pontryagin category} of decorated Lagrangians to the wrapped Fukaya category of the Liouville Manifold.

Here the Pontryagin category of a topological space $X$ is a DG category whose objects are the points of $X$ and morphisms between two points are given by cubical chains on the space of paths in $X$ between the points. The associative product structure is given by concatenation of paths. The exact definition of Pontryagin Category of a path connected space is in Section (\ref{sec: PreviewFloer}).

Consider exact Lagrangian $\iota: L \into M $ that is the completion of one with Legendrian boundary near cylindrical ends. Let $ b \in H^2(M; \Z/2)$ be a class whose pullback under $\iota$ is the second Stiefel-Whitney class of $L$:
\begin{equation}
 \iota^*b = w_2(L).
\end{equation}
Let $\Lag_b$ denote the space of Lagrangians that satisfy the above equation.
We may define a relative Pin structure on $L$ relative to $M$ from $b$, which is a free $H^1(L; \Z/2) $-torsor. Therefore we consider the space $\tLag_b$ of decorated Lagrangian $\LL = (L, \beta)$ such that $L \in \Lag_b$ and $\beta \in H^1(L;\Z/2)$; in other words, it is the space of all possible relative Pin structures on a Lagrangian.
\[
\begin{tikzcd}   
H^1(L;\Z/2) \ar{r} &\tLag_b\ar{d}{\pi} \\
&\Lag_b
\end{tikzcd}
\]

Let $\tLag$ denote the space of all decorated Lagrangians, we get the main theorem of this paper.
\begin{thm}\label{thm: main theorem in introduction}
	For any open Liouville manifold $(M,\omega)$ with cylindrical ends, there exists an $A_\infty$ functor from the Pontryagin category of decorated Lagrangians to the wrapped Fukaya category of the ambient symplectic manifold:
	\begin{equation}\label{Map:functor}
	\Moore (\tLag)  \xrightarrow{\F} \Wrap^*(M).
	\end{equation}
\end{thm}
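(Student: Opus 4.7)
The plan is to construct the functor $\F$ in two layers. On objects, a decorated Lagrangian $\LL = (L,\beta)$ is sent to $L$, regarded as an object of $\Wrap^*(M)$ with relative Pin structure specified by $\beta$. On morphisms, for each $k\ge 1$ and each sequence $\LL_0,\ldots,\LL_k \in \tLag$, we must construct a chain map
\[
 \F^k \colon \Moore(\LL_0,\LL_1) \otimes \cdots \otimes \Moore(\LL_{k-1},\LL_k) \;\longrightarrow\; \mathrm{hom}_{\Wrap}(L_0, L_k)
\]
of degree $1-k$, and verify the $A_\infty$ functor equations. A cubical chain $\sigma_i\colon [0,1]^{n_i}\to \Moore(\LL_{i-1},\LL_i)$ is, by adjunction, a smooth family of paths $\gamma_{\vec s_i}\colon [0,\ell_i]\to\tLag$, hence a family of (decorated) Lagrangians parametrized by $[0,1]^{n_i}\times [0,\ell_i]$. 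The idea is to use these families as moving boundary conditions for perturbed pseudo-holomorphic polygons, and define $\F^k(\sigma_1,\ldots,\sigma_k)$ as a count of rigid solutions.

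The first step is to set up, for each output chord $x\in\CW^*(L_0,L_k)$, a parametric moduli space $\CM(x;\sigma_1,\ldots,\sigma_k)$ fibered over $[0,1]^{n_1}\times\cdots\times [0,1]^{n_k}$ whose fiber over $(\vec s_1,\ldots,\vec s_k)$ consists of Floer solutions on a $(k+1)$-punctured disc whose $i$-th boundary arc has moving Lagrangian boundary condition given by $\gamma_{\vec s_i}$. Over this arc one needs a time-dependent Hamiltonian and almost-complex structure; the wrapping Hamiltonian at infinity is chosen as usual to be quadratic, while the domain-dependent perturbation data is chosen consistently over the parameter space and compatibly with all strata, following the recursive setup of Seidel and Abouzaid. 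Given transversality, $\F^k(\sigma_1,\ldots,\sigma_k)$ is the signed count of the zero-dimensional part of $\CM(x;\sigma_1,\ldots,\sigma_k)$, weighted by $x$. The signs are fixed by gluing the decorations: the relative Pin structures encoded by the $\beta_i$ trivialize the determinant lines of the Cauchy--Riemann operators coherently along paths in $\tLag_b$, which is precisely why we decorate by $H^1(L;\Z/2)$-torsors rather than just working in $\Lag_b$.

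The next step is verification of the $A_\infty$ functor relations. These will follow from analysis of the codimension-$1$ boundary of $\CM(x;\sigma_1,\ldots,\sigma_k)$ when it is $1$-dimensional. Such boundary has three types of contributions:
\begin{enumerate}
\item[(i)] Strip-breaking/polygon-breaking at the punctures, which reproduces the wrapped Fukaya $A_\infty$ operations $\mu^d_{\Wrap}$ applied to outputs of lower $\F^{k_j}$;
\item[(ii)] Boundary faces of the cubes $[0,1]^{n_i}$, which reproduce the cubical differential of $\sigma_i$;
\item[(iii)] Interior degenerations of a path $\gamma_{\vec s_i}$ where a marked point on the boundary arc pinches, reproducing Moore concatenation, and hence the composition in $\Moore(\tLag)$.
\end{enumerate}
Matching these three boundary contributions gives precisely the $A_\infty$ functor equation
\[
 \sum \mu^d_{\Wrap}\bigl(\F^{k_1}(\cdots),\ldots,\F^{k_d}(\cdots)\bigr) \;=\; \sum \pm \F^{k}\bigl(\sigma_1,\ldots,\mu_{\Moore}(\sigma_i,\ldots),\ldots,\sigma_k\bigr),
\]
where $\mu_{\Moore}$ combines the cubical differential with Moore concatenation.

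The main obstacle will be the analytic setup for step one: one must choose perturbation data for the moving Lagrangian boundary problem that is simultaneously consistent across gluings of polygons, compatible with the cubical face structure on each $[0,1]^{n_i}$, compatible with concatenation of Moore paths (so that type (iii) degenerations really factor through lower $\F^{\bullet}$), and that tames the quadratic wrapping at infinity uniformly in the parameters $\vec s_i$. Once such a coherent system of perturbations exists and yields transverse, Gromov-compact moduli spaces, the three identifications above are obtained by standard gluing arguments, and orienting each contribution coherently via the relative Pin data from the decorations $\beta_i$ produces the claimed $A_\infty$ functor.
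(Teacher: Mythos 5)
Your high-level framework matches the paper's: send a cubical chain of Moore paths in $\tLag$ to a family of moving Lagrangian boundary conditions, build parametric moduli spaces, and read the $A_\infty$-functor relations off the codimension-one boundary. But the concrete domain you propose does not match what makes the argument work, and you omit the one structural ingredient that actually produces the higher wrapped operations.

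The paper's domain for $\F^d$ is a \emph{once}-punctured disc (an upper half-plane $\HH$ with a single output puncture $\xi^0$) carrying $2d$ boundary marked points $z_1,\ldots,z_{2d}$, so that the boundary alternates between $d$ moving segments (from $z_{2i-1}$ to $z_{2i}$, governed by $\sigma_i$) and $d+1$ fixed segments mapped to $L_0,\ldots,L_d$. Crucially, the paper imposes a \emph{fixed-ratio condition} on the lengths $\tilde{l}_i = |z_{2i}-z_{2i-1}|$, namely $\tilde{l}_i/\tilde{l}_j = l_i/l_j$ where $l_i$ is the Moore length of $\sigma_i(\tau_i)$. This cuts the moduli space of domains $\ZZ^{\vec l}_{2d}$ down to dimension $d-1$, matching $\RR_{d+1}$. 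A ``$(k+1)$-punctured disc with moving conditions on each arc'' is not the right object: there are no Hamiltonian chord inputs available to feed the extra $k$ punctures (your inputs are cubical chains, not chords), and without the alternating fixed/moving segments and the fixed-ratio constraint the dimension count and the degeneration analysis both go wrong.

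The fixed-ratio condition is also precisely what produces your item (i) — but not by ``strip/polygon breaking at the punctures,'' since there is only one puncture. When a moving segment $[z_{2k-1},z_{2k}]$ shrinks, the ratio constraint forces \emph{all} moving segments to shrink simultaneously, yielding a multi-level bubble: a level-one Fukaya disc in $\RR_r$ with Hamiltonian chord asymptotics $x_1,\ldots,x_r,x$, glued to $r$ level-two half-planes in $\ZZ_{2d_i}$. That is where $\mu^r_{\CW}$ applied to $\F^{d_1},\ldots,\F^{d_r}$ enters; strip breaking at the lone output puncture only gives the $r=1$ term $\mu^1_{\CW}\circ\F^d$. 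Moore concatenation (your item (iii)) comes from the \emph{other} collision, $z_{2k}$ meeting $z_{2k+1}$ — i.e. a fixed segment disappearing so that two adjacent moving segments merge — not from ``a marked point pinching on a boundary arc.'' Your item (ii), the cubical faces, and your treatment of relative Pin structures, decorations, and the need for a coherent system of perturbations taming the quadratic Hamiltonian, are all in line with the paper. The gap is the missing fixed-ratio structure and the resulting simultaneous collapse mechanism; as written, your proposal would not produce the $\mu^d_{\CW}$ terms for $d\ge 2$ on the wrapped side of the $A_\infty$-functor equation.
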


\begin{rem}
	The degree $0$ map of the $A_\infty$ morphism is a chain map $\F^1$:
	\begin{equation}
	C_*(\Omega_{\LL_0,\LL_1}\tLag) \xrightarrow{\F^1} \CW^{-*}(\LL_0,\LL_1).
	\end{equation}
	When $\LL_0 = \LL_1 = \LL$, this induces a map from the homology of the based loop space of Lagrangians with Legendrian boundary to the wrapped Floer homology of the base point Lagrangian.
	\begin{equation}\label{homology}
	H_{*}(\Omega_{\LL} \tLag) \xrightarrow{H(\F)} \Cal{HW^{-*}}(\LL).
	\end{equation} 
	If $M  = T^*Q$ and $\LL = T_q^*Q$ this is surjective as shown by the theorem below, but not necessarily surjective in the general case.
\end{rem}

The above theorem implies that we can use Wrapped Floer homology to detect nontrivial families of Lagrangians. As an example for computation, let the Liouville manifold $M$ be the cotangent bundle $T^*Q$ of a compact manifold $Q$. In this case, M. Abouzaid constructed an $A_\infty$ functor, which, in the case of a cotangent fiber $\LL = T^*_q Q$, gives a map on morphism spaces $f:\Cal{CW}^*(\LL,\LL; H) \to C_{-*}(\Omega_q Q)$ \cite{Ab12}.
Here each cotangent fibre $T_q^* Q$ is contractible, thus the map $\tLag_b \to \Lag_b$ is an isomorphism.
By taking the above $A_\infty$ functor on the level of automorphisms of a single Lagrangian and composing with the chain level homorphism from $f$ mentioned above, we obtain the following homotopy commutative diagram:

\begin{thm}\label{thm:cotangent_bundle}
	With $L = T_q^*Q$, the map $(f\circ \F \circ \eta)_*$ in the diagram below equals the identity map on $H_{-*}(\Omega_q Q)$.
	
	\[\xymatrix{
		&C_{-*}(\Omega_q Q)\ar@{.>}[dr] \ar^{id}[d] \ar^{\eta}[r]  &C_{-*}(\Omega_{L}{\Lag}) \ar^{\F}[d]\\
		&C_{-*}(\Omega_q Q)	&\Cal{CW}^*(\LL,\LL; H)  \ar^{f}[l]
	}
	\]
	Here the map $\eta$ sends every loop in the base to the corresponding loop of cotangent fibers in the space of Lagrangian embeddings.
	
\end{thm}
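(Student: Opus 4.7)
The plan is to identify the composition $f \circ \F^1 \circ \eta$ with the identity on $C_{-*}(\Omega_q Q)$ up to chain homotopy by exhibiting a one-parameter family of moduli spaces whose algebraic boundary interpolates between the two maps. Throughout I work with a singular chain $\gamma : \Delta^k \to \Omega_q Q$, representing a $k$-parameter family of based loops $\gamma_s : S^1 \to Q$, and trace it through each map in the composition.

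First I would unpack $\F^1 \circ \eta$. Under $\eta$, the family $\gamma_s$ produces the family $t \mapsto T^*_{\gamma_s(t)} Q$ of cotangent fibers, viewed as a $k$-chain in $\Omega_L \Lag$ based at $L = T_q^* Q$. The map $\F^1$ evaluates this chain by counting rigid holomorphic strips with the wrapping Hamiltonian and moving Lagrangian boundary condition along the loop of fibers; it outputs an element of $\CW^{-*}(L,L)$ expressed in the basis of Hamiltonian chords. Applying Abouzaid's $f$ then glues to each such strip a rigid holomorphic half-disc with boundary on $L$ and records the base projection of the combined boundary as a chain in $\Omega_q Q$.

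The essential geometric observation is that each boundary arc of the strip that lies on $T^*_{\gamma_s(t)} Q$ has base projection exactly $\gamma_s(t)$, so every configuration contributing to $f \circ \F^1 \circ \eta(\gamma)$ projects to a loop whose moving-boundary portion coincides with a reparametrization of $\gamma_s$. I would then interpolate, via a neck-shrinking deformation of the Floer data, between the glued setup, which realizes $f \circ \F^1 \circ \eta$, and a degenerate setup in which the moving-boundary strip becomes a constant configuration at the loop itself; in this limit the moduli space retracts to a single configuration per input loop, producing the tautological evaluation sending $\gamma$ to itself. The signed count on the opposite end of the one-parameter family is $f \circ \F^1 \circ \eta(\gamma)$, while breaking at interior parameter values contributes the chain homotopy, allowing one to conclude the equality on homology.

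The main obstacle will be the analytic justification of this cobordism: showing the one-parameter moduli space is a compact manifold with boundary of the expected form, ruling out sphere and disc bubbling (automatic in the exact setting), establishing transversality at the degenerate endpoint, and computing the signed count there as $1$. A further subtlety is the compatibility of the interpolating Floer data with the $A_\infty$ structure used to define $\F$ and with Abouzaid's specific perturbations defining $f$, which I would address by choosing a consistent family of perturbations that restricts to both at the endpoints. Sign complications from the relative $\Pin$ structure on $L$ do not arise here because $T_q^* Q$ is contractible, so the $H^1(L; \Z/2)$-torsor of decorations is trivial.
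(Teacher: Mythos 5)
The high-level strategy you propose — build a one-parameter family of moduli spaces whose ends realize $f\circ\F\circ\eta$ and $\mathit{Id}$ respectively, with the intermediate parameters producing the chain homotopy — is indeed the paper's strategy (Section 5, the moduli space $\NN(\sigma)$ parametrized by $R\in[0,1]$ with the two ends given by Equations (\ref{eqn:gluing_end_of_NN}) and (\ref{eqn:identity_end_of_NN})). You also correctly note that bubbling is ruled out by exactness and that the $\Pin$ torsor on the cofiber is trivial. So the outline is sound.

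However, your account of the degenerate end contains a gap that would stall the proof as written. You claim that at the degenerate end ``the moduli space retracts to a single configuration per input loop, producing the tautological evaluation sending $\gamma$ to itself.'' But if you shrink the moving Lagrangian boundary condition to a constant (as the paper does with $\sigma_\epsilon(\tau)(s) = \tilde{\sigma}(\tau)(\epsilon\cdot\psi(s))$, so at $\epsilon=0$ the boundary sits entirely on $T_{q_0}^*Q$), the resulting moduli space $\Disc(\sigma_0,q_0)$ consists of constant strips, and the boundary arc on $Q$ is the constant loop at $q_0$, not $\gamma$. A naive evaluation by boundary projection would therefore output the constant loop, giving the trivial map rather than the identity. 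The crucial ingredient the paper uses, and which your proposal omits, is a modified evaluation map: for each $(R,\tau)$ one records the boundary arc on $Q$ from $q_0$ to $q_R = \sigma(\tau)(R)$ \emph{and concatenates it with the unused tail} of the original path $\sigma(\tau)$ from $q_R$ to $q_1$. At $R=1$ the tail is trivial and one recovers $f\circ F\circ \eta$; at $R=0$ the boundary arc is a point and concatenation returns the full original loop, giving the identity on the nose. Without this concatenation built into the evaluation, the endpoint count you describe does not equal $\mathit{Id}_*$. Relatedly, your ``essential geometric observation'' about the base projection of the moving-boundary arc being $\gamma_s(t)$, while true, is not what the evaluation map records — both $f$ and the homotopy evaluation project the arc lying on the zero section $Q$, not the arc on the moving cotangent fibers. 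You should rework the degenerate-end analysis around the concatenated evaluation; once that is in place, the transversality and fundamental-chain arguments you allude to go through as in Lemma (\ref{lemma: homotopy_by_evaluation}).
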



\begin{rem}
	Recently in a series of papers\cite{GPS01, GPS02, GPS03}, Ganatra, Pardon and Shende used the theory of microlocal sheaves to study the symplectic structure of Liouville manifolds and it is very likely that one can also use their machinery to achieve the results of this paper. 
\end{rem}

Our paper first reviews the theory of wrapped Fukaya category and Pontryagin category, then construct the moduli spaces that give rise to the functor. Afterwards we take the special case of a cotangent bundle of a compact manifold as an example of this $A_\infty$ functor.  Finally we attach brief explanations of relative Pin structures, signed operations, $C^0$ estimates and gluing theorem in the appendix.

\section{Wrapped Fukaya Category} \label{sec:Wrapped}

\subsection{Review of Floer theory in open Liouville manifold} \label{sec: PreviewFloer}

In \cite{AbSe09} Abouzaid and Seidel constructed the wrapped Floer cohomology on Liouville domains in analogy with the Lagrangian Floer cohomology of a closed symplectic manifold. They defined $HW^*(L;H)\cong \underrightarrow{\text{\it lim}}_w \, HF^*(L;wH)$ from a direct system of continuation maps from Lagrangian Floer cohomology with Hamiltonians of increasing slope at infinity. Later in \cite{Ab12}, Abouzaid used Hamiltonians that are quadratic at infinite ends to define wrapped Floe homology. We are going to follow this definition. 

\subsubsection{Geometric Setup} \label{subsec:geometric setup} From now on we work on open Liouville manifold $(M,\omega)$ with Liouville $1$-form $\theta$ and infinite cylindrical ends. Namely there is a compact Liouville domain $(M^{in}, \omega)$ with boundary, together with a one-form $\theta$ such that $\omega = d\theta$ is symplectic, and the dual Liouville vector field $Z$, uniquely determined by the requirement that $i_Z \omega = \theta$ that points strictly outwards along $\del M^{in}$. This implies that $\theta|_{\del M^{in}}$ is a contact one-form. By flowing inwards from the boundary along $Z$, one obtains a collar embedding $(0, 1] \times \del M^{in} \into M^{in}$. One can then complete $M^{in}$ to obtain $M$ by attaching the big half of symplectization:
\begin{equation}\label{eq:completion}
 M = M^{in} \cup_{\partial M^{in}} ([1,\infty) \times \partial M^{in}).
\end{equation}
The piece $[1,\infty) \times \partial M^{in}$ is called the \textit{infinite cone}. The completion carries a natural one-form $\hat{\theta}
$ such that $d\hat{\theta} = \hat{\omega}$ is symplectic and an associated Liouville vector field $\hat{Z}$. On $M^{in} \subset M$ these restrict to the previous data, while on the overlapping piece $(0, \infty) \times \del M^{in}$, $\hat{\theta} = r(\theta|_{\del M^{in}})$ and $\hat{Z} = r\del_r$, where $r$ is the variable on $(0, \infty)$.  

Let $L \in M^{in}$ be a Lagrangian intersecting $\del M^{in}$ transversally, and which has the following property
\begin{align}\label{eq:exactness}
	& \parbox{30em} {$\theta|_L$ is exact, $\theta|_L = df.$ Moreover, $\theta|_L$ vanishes near the boundary $\del L = L \cap \del M^{in}$.}
\end{align}
The second part implies that $\del L $ is a Legendrian submanifold of $\del M^{in}$. Furthermore, by attaching an infinite cone to this boundary, one can extend $L$ to a smooth noncompact Lagrangian submanifold 
\begin{equation}
	\hat{L} = L \cup_{\partial L} ([1,\infty) \times \partial L) \subset M.
\end{equation}
In the remainder of this paper, we only use $\theta, \omega, L$ to denote their extensions (resp. completions) in $M$ since there won't be any confusion. 

 We use $\psi^\rho$ to denote the image of the Liouville flow for time log$(\rho)$ outwards along the cylindrical end. Namely, $$\psi^\rho(r,x)=(\rho \cdot r, x).$$

Let $\sH (M)\subseteq C^\infty(M,\R)$ denote the set of smooth functions $H$ such that 
\begin{equation}\label{eq:quadratic Hamiltonian}
 H(q,r)=r^2
\end{equation}
for any $(q, r)$ (where $q \in \del M^{in}$) on the cylindrical end and $r \gg 1$. Namely $H$ is quadratic with respect to the radial coordinate away from some compact subset of $M$.  We fix such a function for the purpose of defining Floer homology, and let $X_H$ denote the Hamiltonian flow of $H$ defined by the equation 
\begin{equation} \label{eqn:Hamiltonian_equation}
i_{X_H} \omega =-dH.
\end{equation}

We denote by $\Lag$ the space of all Lagrangians with vanishing Maslov index that also satisfies Condition (\ref{eq:exactness}), namely those that are equipped with Legendrian boundary and the restriction of the one form $\theta$ on $L$ is exact,
\begin{equation} \label{eq:primitive_of_one_form}
\theta|_L=df,
\end{equation}
for some $f\in C^\infty(L,\R).$ When $L$ moves in $\Lag$, we shall choose a primitive $f_L$ for the restriction of $\theta$ to each such Lagrangian that is continuous with respect to $L$.

For each pair of Lagrangians $L_0, L_1 \in \Lag$, we define $\Chord (L_0,L_1)$ to be the set of time-1 flow lines of $X_H$ that start from $L_0$ and stop at $L_1$, namely a map $x:[0,1] \to M$ such that
\[
\begin{cases}
x(0) \in L_0\\
x(1) \in L_1\\
\frac{dx}{dt}=X_H\\
\end{cases}
\]
In order to get well-defined moduli spaces, we assume that 
\begin{equation}
\label{eq:non-degenerate_chord}
\parbox{36em}{all  time-$1$ Hamiltonian chords of $H$ with boundaries on $L_0$ and $L_1$  are non-degenerate.}
\end{equation}

It is convenient to point out that the Hamiltonian chords above correspond bijectively to the intersection points of $L_1 $ with the time-1 flow $\phi^1_H(L_0)$ of $L_0$ under the Hamiltonian flow of $H$. In particular, the non-degeneracy of the chords is the same as transversal intersection of $L_1$ and $\phi^1_H(L_0)$. Condition (\ref{eq:non-degenerate_chord}) is therefore true after we do a generic Hamiltonian perturbation on the Lagrangians that intersect transversally with $\del M^{in}$.  Since Lagrangian Floer Homology is invariant under Hamiltonian perturbation, the above assumption is valid.

\subsubsection{Brief discussion of relative Pin and brane structures} \label{subsec: Pin and brane structures}

We briefly introduce the definition of Lagrangian branes for the sake of a proper definition of Lagrangian Floer homology.
Let $(M,\omega,J)$ be symplectic with $2c_1(M)=0$, equipped with a non-vanishing quadratic volume form $\eta_M^2$, namely a global section of the line bundle $(\Omega^{n,0}M)^{\otimes 2}=(\wedge^n_\C T^*M)^{\otimes 2}$. Likewise, given a symplectic vector bundle $\varphi: E \to B$ of rank $r$ such that $2c_1(E)=0$, we have a global non-vanishing complex volume form $\eta_E^2 \in H^0(B,(\wedge^{r}_\C E)^{\otimes 2})$. 
Let $Gr(E)$ be the set of Lagrangian sub-bundles of $E$,  and $v_1,\cdots, v_r$ be a basis of $E_x$ for any $x \in B$.
 We define the squared phase map of $E$ as follows:

\begin{defin}\label{defin:phase_map}

	Given a symplectic vector bundle $\varphi: E \to B$ of rank $r$ such that $2c_1(E)=0$, and any Lagrangian sub bundle $\phi:F\subset E$, consider the following squared phase map of $E$:
	\begin{align*}
		\alpha_E: &Gr(E)\times B\to S^1, \\
		\alpha_E(F,x)&:=\frac{\eta^2_x(v_1\wedge v_2\wedge \cdots v_r)}{|\eta^2_x(v_1\wedge v_2\wedge \cdots v_r)|}.
	\end{align*}
	 
\end{defin}
\begin{rem}
	A simple linear algebra exercise shows that $\alpha_E$ is independent of the choices of the set of basis; thus it is well defined.
\end{rem}

Moreover, if the Maslov class of $F$ vanishes, namely $\alpha_E(F,\cdot)_* : H_1(B)\to H_1(S^1) \simeq \Z$ is zero, we lift the map $\alpha_E(F,\cdot)$ to a map $\alpha^\#$ from $B$ to the universal cover $\R$ of $S^1$. In this way we obtain the following definition:

\begin{defin}\label{defin:gradings_on_Lagrangian_bundles_with_vanishing_Maslov_class}
	For a Lagrangian sub-bundle $F$ with vanishing Maslov class inside a symplectic fibration $E \to B$, an associated grading $\alpha^\#$ on $F$ is a function:
\begin{align}\label{eq:grading}
	\alpha^\#: & B \to \R,\\
 \text{such that   }	\exp(2\pi i \alpha^\#(x))&:=\alpha_E(F,x). \label{eq:grading_condition}
\end{align}
\end{defin}

\begin{rem}
	The choice of grading is not a canonical one, in fact, it is a free torsor of $H^0(B; \Z)$. 
\end{rem}

We further assume that for some $b \in H^2(B;\Z/2)$, the second Stiefel-Whitney class $w_2(\phi)\in H^2(B;\Z/2)$ is the same as $b$, namely $b=w_2(\phi)$. Fixing a triangulation of $B$, we can choose an oriented vector bundle over the three skeleton $B_{[3]},\psi: E_b \to B_{[3]}$ such that $w_2(\psi)=b|_{B_{[3]}}$. Then our assumption on $w_2(\phi)$ gives 
\begin{equation}\label{eq:relative Pin structure}
	w_2(\phi|_{B_{[3]}} \oplus \psi)=0.
\end{equation}
From the standard literature(\cite[Section 11i]{seidel-book}, \cite{kirby}), we know that second Stiefel-Whitney class is precisely the obstruction to the existence of Pin structures:
\begin{defin}\label{def:Relative_Pin_structures}
	A relative Pin structure on a Lagrangian sub-bundle $F$ is the choice of a Pin structure on the vector bundle $F|_{B_{[3]}}\oplus E_b$.
\end{defin}

\begin{defin}[brane structure]\label{def: brane structure}
	Given a symplectic vector bundle $E\to B$ and $b \in H^2(B;\Z/2)$, consider a Lagrangian sub-bundle $F\to B$ such that $2\mu(F)=0, \ w_2(F)+b=0$. A brane structure with respect to $b$ consists of a pair ($\alpha^\#, P^\#$), where $\alpha^\#$ satisfies the equation (\ref{eq:grading_condition}) and $P^\#$ is a relative Pin structure on $F$.
\end{defin}
\begin{rem}
	The obstruction to the existence of a grading is the Maslov class $\mu(F) \in H^1(B;\Z)$ represented by the map $\alpha_E(F,\cdot): B \to S^1$; if $b=0$, then we have a Pin-structure on $F$ in the sense of \cite{seidel-book}, whose obstruction is the second Stiefel-Whitney class $w_2(E) \in H^2(B,\Z/2)$. Here we take the direct sum with $E_b$ so as to ``kill" the obstruction given by possibly non-zero $b$. 
\end{rem}
Now in our situation, the base $B$ of the fibration is an exact Lagrangian sub manifold $L\in M$ with Legendrian boundary, the sub-bundle is just $TL \subset TM|_L$ whose Maslov class $\mu(TL)$ vanishes.

Therefore, to each Hamiltonian chord $x \in \Chord(L_0, L_1)$, we can assign a grading $\mu(x)$ from the gradings on $L_0, L_1$ for given transverse brane structures on $L_0,L_1$. Here we define $\mu(x):=\lfloor \alpha_1^\#-\alpha^\#_0 \rfloor+1$, where $\lfloor\cdot \rfloor$ is the floor function that lies in $\Z$. We also define the orientation space $o_x=Hom(P^\#_0,P^\#_1)\otimes TL_1^{\otimes \mu(x)}$.
 For details we refer to \cite[Section 11]{seidel-book}. 
 
 We denote the space of exact Lagrangians with Legendrian boundary condition with a relatively Pin structure with respect to the background class $b \in H^2(M)$ by
 \begin{equation}\label{eq: $Lag_b$}
 \Lag_b
 \end{equation}
  Namely Lagrangians with vanishing $w_2(L)+b$. We know that the isomorphism classes of Pin structures on $TL$ is a free torsor of $H^1(L;\Z/2)$ by examining the long exact sequence
 \begin{equation}
  \cdots \to H^1(L; \Z/2) \to H^1(L; Pin(n))\to H^1(L, O(n)) \to H^2(L;\Z/2) \to \cdots
 \end{equation}
 that arises from the exact sequence of classifying spaces:
 \begin{equation}
 B\Z/2 \to BPin(n)\to BO(n) \to B^2\Z/2 \to \cdots
 \end{equation} 
 Thus for each $L \in \Lag_b$, together with a Pin structure $P^\#$ on $TL$ and any class $\beta \in H^1(L;\Z/2)$, we may twist $P^\#$ by $\beta$ as follows: take a line bundle $\xi$ on $L$ whose first Stiefel-Whitney class is $\beta$ and its associated double cover $S(\xi)$. The Pin structure $P^\#$ is a Pin(n)-principal bundle $P^\# \to L$ with an isomorphism $P^\#\times_{Pin(n)}\R^n \iso TL$. Let $\pm e$ denote the kernel of the double cover $Pin(n) \to O(n)$. We take the product $S(\xi)\times_{\Z/2} P^\#$ where $\Z/2$ acts on $S(\xi)$ by $\pm1$ and on $P^\#$ by $\pm e$. This gives a new Pin(n)-structure on $TL$.
 
  A brane structure on $L$, by Definition (\ref{def: brane structure}), consists of a grading $\alpha^\#: L \to \R$ and Pin structure $P^\#$; the gradings $\alpha^\#$ are free torsors of $H^0(L,\Z)$. Notice in our situation we don't require that $w_2(L)=0$, yet all the Lagrangians we consider have to satisfy $w_2(TL)=i_L^*b$ for some background class $b\in H^2(M,\Z/2)$.
 Denoting by $\tLag_b$ the space of Lagrangians equipped with a specific Pin structure ($L, P^\#$) with a fixed background class $b$ (we may omit this in later section and just denote the space $\tLag$ when it is clear from the context), we obtain a fibration:
 \begin{equation*}
 \begin{tikzcd}
 H^1(L;\Z/2) \ar{r} &\tLag_b\ar{d}{\pi} \\
 &\Lag_b
 \end{tikzcd}
\end{equation*}
Note that for Lagrangians of different first cohomology groups, this fibration gives different fibers, thus in practice we only look at a component of any given $L$ such that the fibration $\pi$ is surjective with the same fiber $H^1(L;\Z/2)$.

\subsection{Almost complex structures}\label{sec:almost_complex_structure_in_general}

In this paper we work with almost complex structures of contact type, namely those satisfy 
\begin{equation}\label{eq:contact type}
	dr \circ J =-\theta 
\end{equation}
near cylindrical ends.
Let $\JJ(M)$ denote the space of such almost complex structures on $M$ which are also compatible with the symplectic form on $M$. This is an infinite-dimensional manifold. Its tangent space $T\JJ(M)$ at any point $J$ consists of endomorphisms $K$ of $TM$ which anti-commute with $J$, and which have the property that $\omega(\cdot, K \cdot)$ is symmetric. We call these endomorphisms \textit{infinitesimal deformations} of the almost complex structure. One can get an actual deformation by exponentiating $K$, namely setting 
\begin{equation} \label{eqn:almost_complex_structure_by_perturbation}
	J'= J \exp(-J K).
\end{equation}
 As in \cite[Section 3]{AbSe09}, for any disc $S$ with punctures on the boundary that are equipped with a set of strip-like ends $\{\epsilon^k\} : Z_{\pm} \to S$, and weights $\{w^k\}$ there exists a contractible choice of almost complex structures parametrized by $S$ such that for each strip-like end $\epsilon^k : Z_{\pm} \to S$ with $(s, t)$ denoting the coordinates of $Z_{\pm}$, the almost complex structure $J_{\epsilon^k(s,t)}$ is only dependent on $t$ for $|s| \gg 0$. In other words, if we go sufficiently far along any end, $J$ becomes independent of $s$, hence compatible with translations in the $s$ direction.

\subsection{Moduli spaces}\label{sec: Moduli}
We consider the space of maps from the strip $Z = \R \times [0,1]$ to a symplectic manifold $M$ satisfying certain equations. First of all, we need to fix some data for the equations: a family $J_t \in \JJ(M)$ of almost complex structures parametrized by $t \in [0,1]$; a closed one-form $\gamma$ on $Z$ such that $\gamma(s, t) = dt$ when $|s| \gg 0$ and a Hamiltonian $H: M \to \R$ that satisfies Condition (\ref{eq:non-degenerate_chord}).
Given a pair $x_0,x_1 \in \Chord(L_0,L_1)$, we have the following definition:

\begin{defin}
	
 $\Disct(x_0;x_1)$ is the moduli space of maps 
\begin{equation*}
	u: Z = \R \times [0,1] \to M,
\end{equation*}
satifying the following conditions
\begin{equation*}
\left \{
\begin{aligned}
&u(\R \times \{1\}) \subseteq L_1,\\
&u(\R \times \{0\}) \subseteq L_0,\\
&\lim_{s\to -\infty}u(s,\cdot) =x_0(\cdot),\\
&\lim_{s\to \infty}u(s,\cdot) =x_1(\cdot).\\
\end{aligned}
\right.
\end{equation*}
such that $u$ solves the Cauchy-Riemann equation 
\begin{equation} \label{eq:CauchyRiemannEq}
(du-X_H\otimes \gamma)^{0,1}=0
\end{equation}
which can also be written explicitly with the family of almost complex structure $J_t$:
\begin{equation*}
	J_t(\del_t u -X_H \cdot \gamma) - (\del_t u -X_H \cdot \gamma)\circ j =0.
\end{equation*}
\end{defin}
Since equation (\ref{eq:CauchyRiemannEq}) is invariant under translation of the variable $s$, there is an action of $\R$ on $\Disct(x_0;x_1)$. After taking the quotient by the free $\R$-action, we get $\CM(x_0;x_1)$; and let it be empty if the action is not free. Standard arguments of the gluing theorem such as those in Appendix (6.3) of \cite{Ab08} give the following:

\begin{prop}
	The moduli space $\Disc(x_0;x_1)$ is regular for a generic family of almost complex structures $J_t$ and has dimension $\mu(x_0)-\mu(x_1) -1$. The Gromov compactification $\Discbar(x_0;x_1) $ is a topological manifold with boundary. The boundary is stratified into topological manifolds with the closure of codimension $1$ strata given by the images of embeddings
	\begin{equation}
		\Discbar(x_0; y) \times \Discbar(y; x_1) \to \Discbar(x_0; x_1),
	\end{equation}
	for all possible $y$ in $\Chord(L_0, L_1)$. For each pair $(y_1, y_2)$ that gives a boundary stratum of higher codimension, we obtain a commutative diagram:
	\begin{equation}\label{eq:corner_chart_moduli_space}
	\xymatrix{
	\Discbar (x_0; y_1) \times \Discbar(y_1; y_2) \times \Discbar(y_2; x_1) \ar[r]  \ar[d] & \Discbar (x_0; y_1) \times \Discbar (y_1; x_1) \ar[d] \\
	\Discbar (x_0; y_2) \times \Discbar(y_2; x_1) \ar[r] & \Discbar (x_0; x_1).
	}
	\end{equation}
	\qed
\end{prop}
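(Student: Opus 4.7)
The plan is to establish this in four standard stages: transversality, index computation, compactness, and gluing. For \emph{regularity}, I would set up the universal moduli space parametrized by the space of $t$-dependent almost complex structures $J_t \in \JJ(M)$, show that the universal linearized Cauchy--Riemann operator is surjective at every pair $(u, J)$ by exploiting a somewhere-injective point of each non-constant strip $u$ (via unique continuation for $\dbar$ applied to differences of solutions, with the inhomogeneous term $X_H \otimes \gamma$ being $s$-independent at the ends and thus harmless in the interior), and then apply the Sard--Smale theorem to extract a comeager set of generic $J_t$ for which $D_u$ is surjective at every solution $u \in \Disct(x_0; x_1)$. Perturbations of $J$ may be supported in a compact region of $M$ bounded away from the cylindrical end, where the contact-type condition \eqref{eq:contact type} pins $J$ down. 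Thus $\Disct(x_0; x_1)$ is cut out transversally.

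The \emph{dimension formula} is standard Riemann--Roch for the linearized operator on a strip with Lagrangian boundary conditions: the Fredholm index of $D_u$ equals $\mu(x_0) - \mu(x_1)$, and quotienting by the free $\R$-action of translation in the $s$-variable reduces the dimension by one, giving $\di \Disc(x_0; x_1) = \mu(x_0) - \mu(x_1) - 1$.

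The main obstacle is \emph{compactness}, since $M$ is non-compact with a cylindrical end and $H$ is quadratic at infinity, so a priori solutions could escape toward the end. The key input is a $C^0$ estimate: every solution $u \in \Disct(x_0; x_1)$ must lie in a fixed compact subset of $M$ depending only on $x_0$ and $x_1$. Using the contact-type condition $dr \circ J = -\theta$, the function $r \circ u$ satisfies a subharmonic-type inequality on the portion of the strip that maps into the cylindrical end, and hence obeys a maximum principle there; combined with the Legendrian boundary conditions on $L_0, L_1$ and the quadratic growth of $H$, this yields the a priori $r$-bound recalled in the appendix. Once the $C^0$ bound is in hand, the standard energy identity, the exactness of $\omega$ and of $\theta|_{L_i}$ (which rules out both sphere and disc bubbling), and the finiteness of the action difference $\Action(x_0) - \Action(x_1)$ combine to produce Gromov compactness in which the only possible limiting behavior is strip-breaking at Hamiltonian chords $y \in \Chord(L_0, L_1)$.

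Finally, \emph{gluing} furnishes the topological manifold-with-boundary structure: for each broken pair $(u_1, u_2) \in \Discbar(x_0; y) \times \Discbar(y; x_1)$, surjectivity of $D_{u_1} \oplus D_{u_2}$ together with a pre-gluing construction and the implicit function theorem produces a local chart of the form $[R, \infty) \times (\text{neighborhood of } (u_1, u_2))$ mapping homeomorphically onto a neighborhood of the broken strip in $\Discbar(x_0; x_1)$, with the broken configuration sitting as the boundary slice $\{R = \infty\}$; this realizes the codimension-$1$ embedding in the statement. At higher-codimension corners corresponding to a double breaking at chords $y_1, y_2$, one can glue the adjacent pair in either order (glue $\Discbar(y_1; y_2) \times \Discbar(y_2; x_1)$ into $\Discbar(y_1; x_1)$ first, or glue $\Discbar(x_0; y_1) \times \Discbar(y_1; y_2)$ into $\Discbar(x_0; y_2)$ first), and the standard compatibility of these two gluing constructions up to reparameterization--that is, associativity of the gluing maps--produces the commutative square \eqref{eq:corner_chart_moduli_space}. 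The $C^0$ estimate is the step I expect to require the most care, since everything downstream is routine once curves are confined to a compact set.
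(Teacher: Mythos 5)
Your outline follows precisely the standard route the paper itself merely cites (Sard--Smale transversality, Riemann--Roch for the index, a $C^0$ bound via the contact-type maximum principle plus exactness to rule out bubbling for Gromov compactness, and gluing for the manifold-with-corners structure together with associativity of gluing for the commutative square), so it is essentially the paper's approach with the details filled in. One small terminological note: for Floer strips the transversality argument is the Floer--Hofer--Salamon \cite{FHS} regular-point argument rather than literal somewhere-injectivity of a pseudoholomorphic curve, but your appeal to unique continuation on differences of solutions is exactly that mechanism.
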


In particular, the codimension $k$ boundary strata consist of disjoint unions over all possible sequences of products of moduli spaces of the following form:
\begin{equation}
\Discbar(x_0; y_1) \times \Discbar(y_1; y_2)\times \cdots \times \Discbar(y_k;x_1).
\end{equation}

The standard arguments of Gromov compactness and the energy estimate arguments given by \cite[lemma 7.2]{AbSe09} gives the compactness of $\Disc(x_0, x_1)$, thus we have the following corollary.
\begin{cor}\label{cor: compactification_strip_moduli}
	For each chord $x_1(t)$, the moduli space $\Discbar(x_0;x_1)$ is empty for all but finitely many choices of $x_0$, and is a compact manifold with boundary of dimension $\mu(x_0)-\mu(x_1) - 1$ whenever $J_t$ is a generic family of almost complex structure. Moreover, the boundary is covered by the images of the natural inclusions.
	\begin{equation*}
		\Discbar(x_0; y) \times \Discbar(y;x_1) \to \Discbar(x_0;x_1).
	\end{equation*}
	\qed
\end{cor}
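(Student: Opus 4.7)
The plan is to reduce the statement to two a priori bounds on any $u \in \Disct(x_0;x_1)$: a $C^0$ bound confining the image of $u$ to a compact subset of $M$, and an energy bound. The manifold-with-boundary structure, the dimension formula, and the boundary stratification are already delivered by the preceding proposition, so the only new content is (i) the finiteness of $x_0$'s with $\Disc(x_0;x_1)$ nonempty, and (ii) the compactness of $\Discbar(x_0;x_1)$ itself; both are immediate consequences of the two bounds together with Gromov compactness.

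First I would establish the $C^0$ estimate by invoking \cite[Lemma 7.2]{AbSe09} essentially verbatim. On the cylindrical end $[1,\infty)\times \partial M^{in}$, the contact-type condition (\ref{eq:contact type}) combined with the prescribed form $H(q,r)=r^2$ from (\ref{eq:quadratic Hamiltonian}) makes the radial coordinate $r\circ u$ subharmonic outside a compact region, while the Legendrian condition on $\partial L_i$ (implied by \ref{eq:exactness}) supplies the boundary inequality needed to close up the maximum principle. The conclusion is that the image of every $u$ lies in a compact subset of $M$ determined only by the $r$-values of the asymptotes $x_0,x_1$.

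Next I would use the primitives $f_{L_i}$ from (\ref{eq:primitive_of_one_form}) to define the Hamiltonian action functional and run a standard Stokes computation, yielding an identity of the form
\begin{equation*}
E(u) \;=\; \Action_H(x_0) - \Action_H(x_1)
\end{equation*}
up to controlled terms coming from the one-form $\gamma$. Non-negativity of $E$ forces $\Action_H(x_0)\geq \Action_H(x_1)$. Since (\ref{eq:non-degenerate_chord}) makes $\Chord(L_0,L_1)$ a discrete set, and the $C^0$ bound confines relevant chords to a fixed compact region where action is proper, only finitely many $x_0$ can yield a non-empty moduli space, proving (i). For (ii), any sequence $u_n\in \Disc(x_0;x_1)$ has uniformly bounded image and uniformly bounded energy, so Gromov compactness produces a subsequential limit; exactness of $\omega$ rules out sphere bubbles and exactness of $L_0,L_1$ rules out disc bubbles, leaving only strip-breaking, which is precisely the boundary stratification of the preceding proposition. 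Hence the limit lies in $\Discbar(x_0;x_1)$, establishing compactness.

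The main obstacle is the $C^0$ estimate: it is the only step where the specific geometric hypotheses—contact-type $J$, quadratic $H$, and Legendrian behavior of $\partial L_i$—must be used in an essential way, and without them solutions could escape along the cone. Everything else is a standard assembly of the energy identity, exactness, Gromov compactness, and the manifold-with-corners output of the preceding proposition.
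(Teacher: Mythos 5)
Your proof is correct and takes essentially the same route as the paper, which simply cites Gromov compactness together with \cite[Lemma 7.2]{AbSe09} for the maximum-principle $C^0$ estimate and the energy/action bound; you have merely filled in the details (the action identity, properness of the action giving finiteness of admissible $x_0$, and exactness of $\omega$ and of the Lagrangians ruling out sphere and disc bubbling) that the paper leaves implicit in the standard references.
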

From now on through the end of this section, we fix the almost complex structure $J_t$ for which the above corollary holds. 

\subsection{Wrapped Floer complex}\label{sec: FloerComplex}
We define the graded vector space underlying the Floer chain complex to be the direct sum
\begin{equation} \label{eq: FloerComplex}
	\CW_b^*(L_0; L_1):=\bigoplus_{x \in \Chord(L_0;L_1)}|o_x|,
\end{equation}
where $o_x$ is the determinant line bundle given by the paths of Lagrangians under the Hamiltonian flow associated with $x$. The details are explained in \cite[Appendix A]{Ab12}
We may simply assume that it is just the vector space freely generated by the sets of chords unless one wants to worry about signs.

The differential is a count of the solutions of rigid moduli spaces of equation (\ref{eq:CauchyRiemannEq}), namely those with $\mu(x_0)=\mu(x_1)+1$. Every element $u$ of the moduli space $\Disc(x_0, x_1)$ is rigid and thus defines an isomorphism:
\begin{equation*}
	o_{x_0} \iso o_{x_1}.
\end{equation*}
Thus an orientation of $o_{x_0}$ is induced from one on $o_{x_1}$, and the map on the orientation lines is denoted $\mu_u$. We define the map:
	\begin{align}
		\label{eq: wrap_differential}
 	\mu_1: \CW_b^i(L_0; L_1) &\to \CW_b^{i+1}(L_0, L_1) \\
 	[x_1] &\mapsto (-1)^i \sum_u \mu_u ([x_1]).
	\end{align}	
The proof that this count is a well defined differential of chain complexes is standard (see eg. \cite{AD}). Moreover, Corollary (\ref{cor: compactification_strip_moduli}) shows that each chord can be the input of only finitely many solutions to (\ref{eq:CauchyRiemannEq}), thus we know that on the right hand side of (\ref{eq: wrap_differential}) there are only finitely many summands.

\begin{rem}
	The graded vector space underlying $\CW_b^*(L_0; L_1)$ depends on $L_0, L_1, H,\omega$, meanwhile the differential depends on $J_t$. When we want to distinguish them as in the next lemma, we write $\CW_b^*(L_0,L_1; \omega, H, J_t)$.
\end{rem}

\begin{lem}
	 If $\psi : M \to M$ satisfies $\psi^*(\omega)=\rho \omega$ for some non-zero constant $\rho$, then we have a canonical isomorphism
	 \begin{align}
	 	\label{eq:rescale}
	 	\CW(\psi): \CW_b^*(L_0, L_1; \omega, H, J_t) \iso \CW_b^*(\psi(L_0), \psi(L_1), \frac{H\circ \psi}{\rho}, \psi^*(J_t)).
	 \end{align}
\end{lem}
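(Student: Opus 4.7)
The plan is to construct $\CW(\psi)$ as the tautological map induced by composition with $\psi$ on all geometric data. The algebraic heart of the argument is the rescaling identity
\begin{equation*}
\psi^{*} X_H = X_{H \circ \psi/\rho},
\end{equation*}
which follows from $i_{X_H}\omega = -dH$ by applying $\psi^*$ and using $\psi^*\omega = \rho\omega$. The same pullback preserves $J$-compatibility: if $J_t$ is $\omega$-compatible then $\psi^*J_t = (d\psi)^{-1}J_t(d\psi)$ is $\psi^*\omega = \rho\omega$-compatible, hence $\omega$-compatible when $\rho>0$. The case $\rho<0$ requires the substitution $\psi^*J_t \mapsto -\psi^*J_t$, and I defer the extra sign bookkeeping to the end.

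First I would define the bijection on chord sets. For a time-$1$ chord $x \in \Chord(L_0, L_1; H)$, set $\tilde x := \psi^{-1}\circ x$; the rescaling identity yields $d\tilde x/dt = X_{H \circ \psi/\rho}(\tilde x)$, and the endpoints transport to the two Lagrangians listed on the RHS (composition with $\psi^{-1}$ realises the asserted identification, everything being symmetric in $\psi \leftrightarrow \psi^{-1}$). Non-degeneracy is a transverse intersection condition, hence diffeomorphism-invariant. The differential $d\psi$ transports the Lagrangian Grassmannian, and since the squared-phase map of Definition \ref{defin:phase_map} is the ratio $\eta^2/|\eta^2|$, it is insensitive to the rescaling $\eta_M^2 \mapsto \rho^n \eta_M^2$: gradings and relative Pin structures then pull back canonically, inducing isomorphisms $o_x \iso o_{\tilde x}$ on orientation lines. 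These assemble into a graded isomorphism of the chain groups of (\ref{eq: FloerComplex}).

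Next I would show that the bijection extends to moduli spaces of Floer strips. Given $u : Z \to M$ solving (\ref{eq:CauchyRiemannEq}) for $(L_0, L_1, H, J_t)$, set $\tilde u := \psi^{-1}\circ u$. Then
\begin{equation*}
 d\tilde u - X_{H \circ \psi/\rho}\otimes\gamma = (d\psi)^{-1}\bigl(du - X_H\otimes\gamma\bigr),
\end{equation*}
and the intertwining $\psi^*J_t\cdot (d\psi)^{-1} = (d\psi)^{-1}\cdot J_t$ shows that $\tilde u$ satisfies the Cauchy-Riemann equation for the rescaled data. Boundary and asymptotic conditions transport tautologically, the inverse map is composition with $\psi$, and the map descends to the $\R$-quotient $\Disc(x_0;x_1)$. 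The linearised operators are also intertwined by $d\psi^{-1}$, so the identification preserves virtual dimension and rigidity.

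Finally, the induced isomorphisms on determinant lines are compatible with the orientation maps $\mu_u$, so the differentials (\ref{eq: wrap_differential}) are intertwined and $\CW(\psi)$ is a chain isomorphism; canonicity is automatic because only $\psi$ enters the construction. The main obstacle I expect is the careful verification that the orientation conventions of \cite[Appendix A]{Ab12} are respected under pullback along $\psi$, together with the sign adjustments required when $\rho<0$, where a global flip of $J_t$ must be absorbed into the induced map on determinant lines.
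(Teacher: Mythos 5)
Your argument is essentially the paper's own: the isomorphism is induced by composing chords and Floer strips with the diffeomorphism, and the key computation is that the Hamiltonian vector field of $H\circ\psi$ with respect to $\psi^*\omega=\rho\omega$ agrees with that of $(H\circ\psi)/\rho$ with respect to $\omega$, which is precisely your identity $\psi^*X_H = X_{H\circ\psi/\rho}$. You spell out several points the paper leaves implicit (intertwining of linearized operators, transport of brane structures and determinant lines, the sign issue when $\rho<0$), but these are elaborations of the same construction rather than a different route. One small caution on the direction of the map: your $\tilde{x}=\psi^{-1}\circ x$ has endpoints on $\psi^{-1}(L_i)$ rather than $\psi(L_i)$; this discrepancy is already latent in the paper's statement and is harmless since $\psi\leftrightarrow\psi^{-1}$ swaps $\rho\leftrightarrow\rho^{-1}$, but in a written-up version you should fix a consistent convention rather than appeal to the symmetry parenthetically.
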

This was shown as \cite[Lemma 3.4]{Ab12}, which I rewrite here for the convenience of the reader.
\begin{proof}
	Given a diffeomorphism $\psi$, we obtain an isomorphism of chain complexes
	\begin{equation*}
		\CW_b^*(L_0, L_1; \omega, H, J) \iso \CW_b^*(\psi(L_0), \psi(L_1), \psi^* \omega,\psi^*H, \psi^*(J)).
	\end{equation*}
	This comes from the composition of each chord from $L_0$ to $L_1$ with $\psi$ and the composition of each solution to equation (\ref{eq:CauchyRiemannEq}) with $\psi$; thereby obtaining an equation with $\psi^*J_t$. Since $\psi^*\omega=\rho \omega$, the Hamiltonian flow of $H\circ \psi$ with respect to $\psi ^* \omega$ agrees with the flow of $\frac{H\circ \psi}{\rho}$ with respect to $\omega$. Therefore we obtain an identification:
	\begin{equation} \label{eq:isomorphic_complexes_rescale}
	\CW_b^*(\psi(L_0),\psi(L_1); \psi^*\omega, H\circ \psi, \psi^*J_t) \iso \CW_b^*(\psi(L_0), \psi(L_1),\omega, \frac{H\circ \psi}{\rho}, \psi^*J_t).
	\end{equation}
	From now on, we omit the Floer data and define 
	\[\CW_b^*(\psi(L_0),\psi(L_1)) := \CW_b^*(\psi(L_0),\psi(L_1);\omega;\frac{H\circ \psi}{\rho}, \psi^*J_t).\]
\end{proof}

\subsection{Product Structure}\label{sec:product}

We would like to study the product operation on wrapped Floer complexes,
\begin{equation}\label{eq:product_structure}
	\CW_b^*(L_1,L_2; H)\otimes \CW_b^*(L_0,L_1; H) \to \CW_b^*(L_0, L_2;H).
\end{equation}

However, the naturally defined map would take value in $\CW_b^*(L_0;L_2; 2H)$.
And the usual continuation map from $\CW_b^*(L_0;L_2; 2H)$ to $\CW_b^*(L_0;L_2; H)$ is not well defined. To solve this problem, we appeal to the rescaling trick in \cite{Ab12}, where we first define a map
\begin{equation} \label{eq:premultiplication}
 \mu_2^{\psi^2}:	\CW_b^*(L_1,L_2;H)\otimes \CW_b^*(L_0,L_1;H) \to \CW_b^*(\psi^2(L_0),\psi^2(L_2); (\psi^2)^*(H)),
\end{equation}
where $\psi^2$ is the time-log$(2)$ Liouville flow. Secondly we compose with the inverse of equation (\ref{eq:isomorphic_complexes_rescale}) to get 
\begin{equation} \label{eq: product_pair_of_pants}
 \mu_2:	\CW_b^*(L_1,L_2;H)\otimes \CW_b^*(L_0,L_1;H) \to \CW_b^*(L_0,L_2; H).
\end{equation}
The map  (\ref{eq:premultiplication}) counts solutions to the following version of the Cauchy-Riemann equation:
\begin{equation}\label{eq:C-R_eqn_for_product}
	(du-X_S\otimes \gamma_s)^{0,1}=0,
\end{equation}
where $S$ is the surface obtained by removing $3$ points $\xi^0,\xi^1,\xi^2$ from the boundary of $\D^2$. In the above equation, $\gamma_S$ is a closed 1-form on $S$ while $X_S$ is the Hamiltonian vector field of $H_S$ on $M$, which depends on $S$. The Hamiltonians satisfy quadratic growth near cylindrical ends, the closed one-form $\gamma_s$ are translation invariant near the strip-like ends of the domain $S$ and the almost complex structures parametrized by the domain $J_S$ agrees with $J_t$ near strip-like ends; the details are in \cite[Section 3]{Ab12}.

We could thus go on to define the moduli space for $\mu^2$.
\begin{defin}\label{def:Floer_data_for_RR_2}
	The moduli space $\Disc(x_0; x_1, x_2)$ is the space of solutions to equation (\ref{eq:C-R_eqn_for_product}) with boundary conditions in $L_0, L_1, L_2$ and such that the image of $u$ converges to $x_{1}$, $x_2$ and $x_0$ at the corresponding strip-like ends.
\end{defin}

The standard arguments show that the Gromov compactification of the moduli space $\Disc_{2}(x_0;x_1,x_2)$  is obtained by adding the strata
\begin{align}
\label{eq:cover_boundary_disc_2}
& \coprod_{y \in \Chord(L_0,L_1)}  \Disc_{2}( x_0;y,x_2) \times \Discbar(y;x_1),   \\
& \coprod_{y \in \Chord(L_1,L_2)}  \Disc_{2}( x_0;x_1,y) \times \Discbar(y;x_2),  \\  \label{eq:cover_boundary_disc_2-end}
& \coprod_{y \in \Chord(L_0,L_2)}  \Discbar( x_0;  y) \times \Disc_{2}( y;x_1,x_2).
\end{align}
The above three cases correspond to the breaking of a holomorphic strip at each strip-like end of the the punctured disc. Note that since we are working with exact Lagrangians, there is no bubbling.
\begin{lem} \label{lem:regularity_compactness_2_inputs}
		For any fixed pair ($x_1,x_2$), the moduli space $\Disc_2(x_0; x_1, x_2)$ is empty for all but finitely many $x_0$, and for a generic family of almost complex structures $J_S$ and Hamiltonians $H_S$, $\Discbar_2(x_0; x_1, x_2)$ is a compact manifold of dimension $\mu(x_0)-\mu(x_1)-\mu(x_2)$, the boundary of which is the union of the codimension 1 strata listed in equations (\ref{eq:cover_boundary_disc_2})-(\ref{eq:cover_boundary_disc_2-end}).
\end{lem}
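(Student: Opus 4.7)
The plan is to follow the standard Floer-theoretic scheme of transversality-plus-compactness, adapted from the strip case of Section~\ref{sec: Moduli} to the three-punctured disc. First, for regularity and dimension, I would form the universal moduli space parametrized by the space of admissible Floer data $(H_S, J_S)$ with the behaviour on strip-like ends prescribed in \cite[Section 3]{Ab12}, and show that its linearization is surjective by the usual argument: any cokernel element of the linearized Cauchy--Riemann operator can be killed by a perturbation of $J_S$ supported in the interior of $S$ away from the nodes and ends, using unique continuation. A Sard--Smale argument then yields a residual set of $(H_S, J_S)$ for which $\Disc_2(x_0; x_1, x_2)$ is a smooth manifold. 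The Fredholm index equals $\mu(x_0) - \mu(x_1) - \mu(x_2)$ by the standard index formula for Cauchy--Riemann operators on a disc with Lagrangian boundary and prescribed asymptotics at boundary punctures; unlike for strips, there is no reparametrization group to quotient by since the three-punctured disc is rigid.

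Next, for compactness I would invoke Gromov compactness, which requires three inputs. An \textit{a priori} energy bound comes from integrating the usual action-energy identity over $S$, using exactness and the relations between $H_S$ and $\gamma_S$ imposed by \cite[Section 3]{Ab12}; this bounds the geometric energy of any solution in terms of the actions $\Action(x_0), \Action(x_1), \Action(x_2)$ and a constant depending only on the Floer data. A $C^0$ bound preventing escape up the cylindrical end is obtained from the integrated maximum principle applied to $r \circ u$, using that $J_S$ is of contact type at infinity (equation~(\ref{eq:contact type})) and that the pair $(H_S, \gamma_S)$ is arranged so that $d(H_S\gamma_S) \leq 0$ outside a compact set, as in the appendix referenced in the introduction. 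Bubbling off of spheres or discs is excluded by exactness of $\omega$ and of $\theta|_L$ respectively. The same action-energy identity forces $\Action(x_0)$ to lie in a fixed interval once $x_1, x_2$ are fixed, and the non-degeneracy assumption~(\ref{eq:non-degenerate_chord}) then implies that only finitely many $x_0$ can appear.

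Once compactness is in hand, the identification of the codimension-one boundary strata is a standard consequence of the Gromov limit structure together with a gluing theorem of the type described in the paper's appendix: any sequence in $\Disc_2(x_0; x_1, x_2)$ without a convergent subsequence in the interior must have energy concentrating along one of the three strip-like ends, producing, after passing to a limit and applying removal of singularities, precisely the three families listed in equations~(\ref{eq:cover_boundary_disc_2})--(\ref{eq:cover_boundary_disc_2-end}). Gluing shows each such broken configuration is locally modeled on a half-line, confirming they form the full codimension-one boundary. The main obstacle I anticipate is the $C^0$ estimate at infinity: because $H$ grows quadratically and is coupled to a non-trivial sub-closed one-form $\gamma_S$ on the pair-of-pants, one must verify carefully that the sub-closedness condition on $H_S\gamma_S$ is preserved throughout a consistent choice of Floer data for all triples $(L_0, L_1, L_2)$ so that the integrated maximum principle applies uniformly and no sequence of solutions escapes up the cylindrical end.
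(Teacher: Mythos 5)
Your proposal follows essentially the same route the paper takes: the paper's own proof is a brief deferral to standard Sard--Smale transversality (citing \cite{FHS}), Gromov compactness with energy and $C^0$ bounds as in the strip case (Corollary~\ref{cor: compactification_strip_moduli}), and gluing for the boundary strata, with the details referred to \cite[Lemma 3.2]{Ab08}. Your more explicit sketch of the universal moduli space argument, the action--energy identity, the integrated maximum principle using contact-type $J$, and exclusion of bubbling by exactness is exactly what those citations unwind to, so there is no substantive difference.
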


 Transversality is given by standard Sard-Smale arguments going back to \cite{FHS} in the case of Hamiltonian Floer cohomology; compactness of the moduli spaces is given by similar arguments of Lemma (\ref{cor: compactification_strip_moduli}), and is explicitly explained in  \cite[Lemma 3.2]{Ab08}.
Whenever the dimension of the moduli space $\Disc_2(x_0; x_1,x_2)$ is $0$, it consists of finitely many elements, each of which defines an isomorphism
\begin{equation}
o_{x_1}\otimes o_{x_2} \to o_{x}.
\end{equation}
We also denote by $\mu_u$ this isomorphism induced on orientation lines; now we are set to define a product on the wrapped Floer complex as follows:
\begin{equation}
	\begin{aligned}
	\mu_2: \CW_b^*(L_2,L_1) \otimes \CW_b^*(L_1,L_0) &\to \CW_b^*(L_2, L_0)\\
    \mu_2([x_1],[x_2]) &= \sum_{ \substack{ |x_0|  = |x_1| + |x_2| \\  u \in \Disc_{2}( x_0;x_1,x_2)} } (-1)^{\mu(x_1)}\mu_u([x_2],[x_1]).
	\end{aligned}
\end{equation}

\subsection{\texorpdfstring{$A_\infty$}{A-infinity} structure}\label{sec:A_inftysubsection} 
\begin{figure} \label{fig:Three_puncture}
	\centering
	\includegraphics{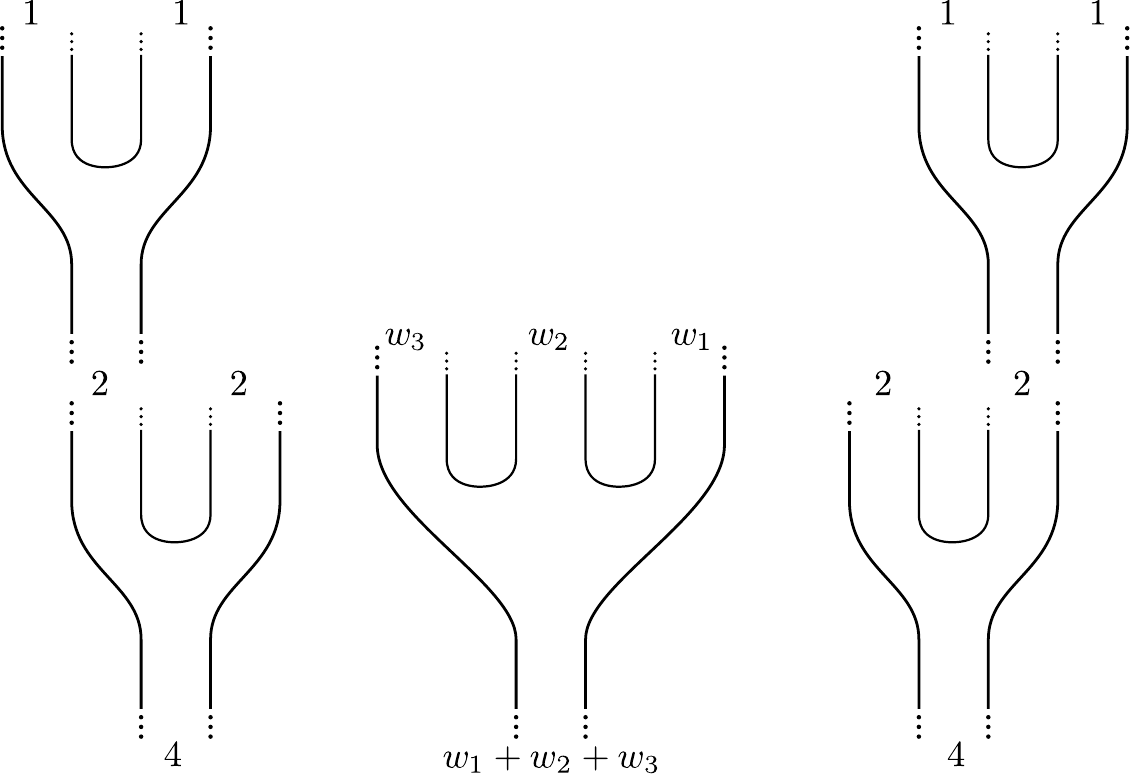}
	\caption{ }
	
\end{figure}
We want to equip each strip-like end with Floer data that would facilitate the definition of an $A_\infty$ structure on the wrapped Fukaya category. First, denote by $\RR_d$ the moduli space of disks that is equipped with $d$ incoming boundary punctures $\{\xi^k\}_{k=1}^d$, ordered clockwise and a outgoing boundary puncture $\xi^0$. Denote by $\RRbar_d$ the Deligne-Mumford compactification of this moduli space; as in \cite[Section 9]{seidel-book}, we fix a \textit{universal and consistent} choice of strip-like ends. In other words, for each surface $S$ and each puncture, there exists a map
\begin{equation*}
	\epsilon^k: Z_{\pm} \to S,
\end{equation*}
whose source is $Z_-$ if it is the outgoing puncture, and $Z_+$ otherwise, and such a choice varies smoothly in the interior of $\RRbar_d$. Each boundary stratum $\sigma$ of $\RRbar_d$ has the following form:
\begin{equation*}
\sigma=\RR_{d_1} \times \RR_{d_2}\times \cdots \times \RR_{d_j}.
\end{equation*}
A neighborhood of $\sigma$ looks like gluing of strip-like ends chosen on these lower dimensional moduli spaces:
\begin{equation*}
 \sigma \times [1,\infty)^{j-1} \to \RR_d.
\end{equation*}
Let us assign to each end $\xi^i$ of a punctured disc a weight $w_i$, which is a positive real number, with the convention that any $\dbar$ operator on such a disc must pull back under $\epsilon^i$ to the Cauchy Riemann equation (\ref{eq:CauchyRiemannEq}) up to applying $\psi^{w_{i}}.$ In the product case where there are two incoming ends and one outgoing end, the inputs have weights 1 while the output has weight 2. 

If we have three inputs and one output, the moduli space of all such possible configurations $\RRbar_3$ is actually an interval whose two ends are represents by nodal discs obtained by gluing  the two possible different ways as shown in Figure (\ref{fig:Three_puncture}), two discs with two incoming ends of weight 1 and the other with weight 2.
In the previous section (\ref{sec:product}), we defined the product (\ref{eq: product_pair_of_pants}) to be the composition of (\ref{eq:premultiplication}) with rescaling. Instead of rescaling first and then take composition, one pulls back by $\psi^2$ of all the data used to define $\Disc(x_0; x_1, x_2)$, and thus obtain a new $\dbar$ operator where the weights are now $2$ at the incoming ends and $4$ at the outgoing end. Then one composes $\CW_b^*(\psi^2(L_3), \psi^2(L_2))$ with $\CW_b^*(\psi^2(L_2), \psi^2(L_0) )$ after applying (\ref{eq:premultiplication}) with $L_0,L_1,L_2$. In this way we define the product with three inputs directly:
\begin{equation}\label{eq:multiplication_disc_with_three_inputs}
\mu_3: \CW_b^*(L_3,L_2) \otimes \CW_b^*(L_2,L_1) \otimes \CW_b^*(L_1, L_0) \to \CW_b^*(L_3,L_0).
\end{equation}

\begin{rem}
	$\mu_2$ is not associative at the chain level. Yet the two ways of composition give the same element after taking cohomology on $\CW_b^*(L_3,L_0)$.
\end{rem}

Similar to the above construction, higher products
\begin{equation}\label{eq: Higher_Prod}
	\CW^*(L_d, L_{d-1}) \otimes \CW^*(L_{d-1}, L_{d-2}) \otimes \cdots \otimes \CW^*(L_1,L_0) \to \CW^*(L_d, L_0),
\end{equation}
come from the count of virtual dimension zero moduli spaces $\Disc_d(x_0; \vec{x})$ of solutions of Equation \eqref{eq:C-R_eqn_for_product}.

%

%

If we don't take into account the strip-breaking phenomenon at the ends, the virtual codimension $1$ strata of the Gromov compactification $\Discbar_{d}(x_0; \vec{x})$ lie over the codimension $1$ strata of $\RRbar_{d}$. The consistency condition on the space of Floer data implies that whenever a disc breaks, each component is a solution to equation (\ref{eq:C-R_eqn_for_product}) for the Floer data up to applying $\psi^C$ for some constant $C$ that depends on the modulus in $\RRbar_{d}$. Such rescaling identifies the solutions of rescaled equation with moduli space of the solutions to the original equation, so we conclude that for each integer $k$ between $0$ and $d-d_2$ and chord  $y \in \Chord(L_{k+1}, L_k)$, we get a natural inclusion:
\begin{equation} \label{eq:codim_1_strata_discs_1_output}
\Discbar_{d_1}(x_0; \vec{x}^1)  \times   \Discbar_{d_2}(y; \vec{x}^2) \to  \Discbar_{d}(x_0; \vec{x})
\end{equation}
where the sequences of inputs in the respective factors are given by $\vec{x}^2= (x_{k+1},\cdots x x_{k+d_2})$ and $\vec{x}^1= (x_1, \cdots , x_k ,y, x_{k+d_2+1}, \cdots, x_d)$.

Thus similar to Lemma (\ref{lem:regularity_compactness_2_inputs}), we get the following lemma.
\begin{lem}\label{lem:boundary_moduli_spaces_a_infty}
	The moduli space $\Discbar_{d}(x_0; \vec{x})$ is compact and is empty for all but finitely many $x_0$ once the inputs $\vec{x}$ are fixed. For a generic choice of Floer data, they form manifolds of dimension 
	\[ |x_0| +d-2-\sum_{1\leq k\leq d} |x_k|,
	\] whose boundary is covered by the images of inclusions (\ref{eq:codim_1_strata_discs_1_output}).\qed
\end{lem}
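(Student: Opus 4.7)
The plan is to follow the standard three-step template for moduli of $A_\infty$-disks in the wrapped setting: first establish regularity by a Sard--Smale argument, then prove Gromov compactness using the specific Liouville geometry at infinity, and finally read off the boundary stratification from the universal and consistent choice of Floer data together with the gluing theorem.

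For regularity I would perturb the family $(H_S, J_S, \gamma_S)$ of Floer data over the interior of $\RR_d$, keeping the choices already made on all boundary strata of $\RRbar_d$ fixed; this is legitimate by a downward induction on $d$, since for smaller $d$ the moduli spaces $\Discbar_{d'}$ have already been made regular. The linearized Cauchy--Riemann operator attached to \eqref{eq:C-R_eqn_for_product} is Fredholm of index $|x_0|-\sum_{k=1}^d |x_k|$ by the usual computation for capped strip-like ends with non-degenerate chords, exactly as in \cite{FHS} and \cite[Lemma 3.2]{Ab08}. Adding the $(d-2)$-dimensional parameter from the conformal modulus $\RR_d$ yields the asserted dimension $|x_0|+d-2-\sum_{k=1}^d |x_k|$. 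A universal moduli space argument then shows that a comeager set of consistent Floer data is regular.

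For compactness the key issue is to rule out escape along the cylindrical end. Since $J_S$ is of contact type in the sense of \eqref{eq:contact type}, $H_S$ is quadratic in the radial coordinate, and $\gamma_S$ is closed, the integrated maximum principle of Abouzaid--Seidel (whose $C^0$ estimate is recalled in the appendix) forces the image of any solution $u$ to be confined to a compact subset determined by the asymptotic chords $x_0, \vec{x}$. Exactness of $\omega$ and of the $L_i$ rules out disk and sphere bubbling and produces the standard energy identity
\begin{equation*}
E(u) \;=\; \Action(x_0) - \sum_{k=1}^d \Action(x_k) + \text{curvature terms from } \gamma_S, H_S,
\end{equation*}
with all curvature terms uniformly bounded on $\RRbar_d$. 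Gromov compactness then yields $\Discbar_d(x_0;\vec x)$ as a compact space of stable broken configurations. The finiteness assertion --- that for fixed $\vec x$ only finitely many $x_0$ yield a nonempty moduli space --- follows because the above identity bounds $\Action(x_0)$ in terms of $\Action(\vec x)$, and non-degeneracy together with the quadratic growth of $H$ imply that only finitely many chords have action in any bounded interval.

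The boundary identification is the delicate step and I expect it to be the main obstacle. Every codimension-$1$ degeneration of the domain splits $S$ into two stable disks joined at a boundary node, and the universal and consistent choice of strip-like ends forces the Floer data on each factor to be the pullback of the ambient data up to one of the rescalings $\psi^C$ described before \eqref{eq:multiplication_disc_with_three_inputs}. Since such rescalings identify the associated moduli spaces canonically with the unrescaled ones, the stratum lying over a boundary point of $\RRbar_d$ of type $(d_1,d_2)$ is precisely the fiber product in \eqref{eq:codim_1_strata_discs_1_output}; strip-breaking at any one strip-like end $\epsilon^k$ appears as the same type of stratum with $d_2=1$, where $\Discbar_1(y;x_k)$ is the compactified quotient strip moduli of Corollary \ref{cor: compactification_strip_moduli}. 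The gluing theorem in the appendix furnishes local charts near each such face, showing that $\Discbar_d(x_0;\vec x)$ is a topological manifold with corners whose codimension-$1$ boundary is exactly the union of the images of \eqref{eq:codim_1_strata_discs_1_output}. The bookkeeping challenge is to check that each face is covered exactly once and with the correct rescaled Floer data, since this is what will later force the $A_\infty$ relations at the chain level.
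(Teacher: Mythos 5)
Your proposal follows the same three-step template (Sard--Smale transversality with consistent Floer data, Gromov compactness plus action/energy estimates at the cylindrical end, and boundary stratification via gluing and rescaling) that the paper invokes by reference to Lemma~\ref{lem:regularity_compactness_2_inputs}, \cite{FHS}, and \cite[Lemma 3.2]{Ab08}. One verbal slip: you call the argument ``downward induction on $d$'' but then appeal to regularity for \emph{smaller} $d$, so you mean upward induction; otherwise the proposal is correct and matches the paper's (largely deferred) argument.
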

Whenever $|x_0|=x-d + \sum_{1\leq k\leq d}|x^k|$, there are therefore only finitely many elements of $\Discbar_{d}(x_0; \vec{x})$. Every such element $u \in \Discbar_{d}(x_0; \vec{x}): S \to M$ induces an isomorphism:
\begin{equation}\label{key}
	\ro_{x_d} \otimes \cdots \otimes \ro_{ x_1} \to \ro_{x_0}.
\end{equation} 
Let us write $\mu_u$ for the induced map on orientation lines, omitting composition with $\CW^*(\psi^{w_{k,S}})$ or its inverse from the notation. We define the $d$th higher product:
\begin{equation}
	\mu_d :  CW^{*}_{b}(L_{d-1}, L_d)    \otimes \cdots \otimes    CW^{*}_{b}(L_1, L_2)  \otimes  CW^{*}_{b}(L_0,L_1 )  \to   CW^{*}_{b}( L_0, L_d)
\end{equation}
Explicitly, 
\begin{equation}
\mu_{d}([x_d], \ldots, [x_1])  = \sum_{\substack{|x_0| =  2 - d + \sum_{1 \leq k \leq d} |x_k| \\ u \in \Disc_{d}(x_0; \vec{x}) }} (-1)^{\dagger}  \mu_{u}([x_d], \ldots,  [x_1]),
\end{equation}
where the sign is given by 
\begin{equation} \label{eq:dagger_sign}
\dagger = \sum_{k=1}^{d} k |x_k|.
\end{equation}
This is just saying we count the elements of $\Discbar_{d}(x_0; \vec{x})$ with the right signs; the derivation of $\dagger$ can be found in \cite[Chapter 12]{seidel-book}

If we now consider the dimension 1 moduli spaces, Lemma \eqref{lem:boundary_moduli_spaces_a_infty} claims that their boundaries are the strata in equation \eqref{eq:codim_1_strata_discs_1_output} which are rigid and thus correspond to the composition of operations $\mu_d$. Taking into account of signs, we get the following proposition:
\begin{prop}\label{prop:a_infty_structure}
The operations $\mu_d$ define an $A_\infty$ structure on the category $\CW^*(M)$. Explicitly, we have 
\begin{equation}
 \label{eq:a_infty_property}
 \sum_{\substack{d_1 + d_2 = d +1 \\ 0 \leq k <d_1}} (-1)^{\maltese_{1}^{k}} \mu_{d_1}\left(x_d, \ldots, x_{k+d_2+1}, \mu_{d_2}(x_{k+d_2}, \ldots, x_{k+1}) ,   x_k, \ldots , x_1 \right) = 0,
\end{equation}
where the sign is given by
\begin{equation*}\label{Maltese_sign}
	\maltese_1^k=k+ \sum_{1\leq j\leq k}|x_j|.
\end{equation*}
\qed
\end{prop}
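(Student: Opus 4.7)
The plan is to establish the $A_\infty$ relations by examining the boundary of the one-dimensional Gromov compactifications $\Discbar_d(x_0; \vec{x})$ and invoking the fact that the signed count of boundary points of a compact oriented $1$-manifold vanishes. Fix $d \geq 1$, a sequence $\vec{x} = (x_1, \ldots, x_d)$ of inputs, and an output $x_0$ with $|x_0| = 1 - d + \sum_{1 \leq k \leq d} |x_k|$. Then Lemma \ref{lem:boundary_moduli_spaces_a_infty}, applied one degree up, asserts that $\Discbar_d(x_0; \vec{x})$ is a compact $1$-manifold whose boundary is covered by the images of the inclusions (\ref{eq:codim_1_strata_discs_1_output}), indexed by decompositions $d_1 + d_2 = d+1$, integers $0 \leq k < d_1$, and intermediate chords $y \in \Chord(L_{k+1}, L_{k+d_2})$. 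Each such boundary point is a rigid broken configuration $(u_1, u_2)$ with $u_i$ in the corresponding lower-dimensional moduli space.

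Next, I would identify the contribution of each boundary stratum with a summand on the left-hand side of (\ref{eq:a_infty_property}). The consistency condition on the Floer data, together with the rescaling prescription in Section \ref{sec:A_inftysubsection}, implies that near each stratum the two broken discs solve equation (\ref{eq:C-R_eqn_for_product}) for the standard Floer data on $\RR_{d_i}$ up to an overall Liouville rescaling $\psi^{C}$ depending only on the position in $\RRbar_d$. Applying the rescaling isomorphism (\ref{eq:rescale}) translates the count of such broken pairs into the composition of $\mu_{d_1}$ with $\mu_{d_2}$ inserted at the $k$th input, acting on the appropriate tensor product of orientation lines. This gives precisely the summand of (\ref{eq:a_infty_property}) indexed by $(d_1, d_2, k, y)$.

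The hard part, as always with $A_\infty$ structures, is the sign verification: one must show that the boundary orientation on each stratum, combined with the Koszul reordering signs on the tensor product of orientation lines, produces precisely the sign $(-1)^{\maltese_1^k}$ appearing in (\ref{eq:a_infty_property}). The signs $\dagger$ of (\ref{eq:dagger_sign}) that enter the definition of $\mu_d$ were designed specifically for this bookkeeping to work out; the computation reduces to verifying $\dagger_{d_1} + \dagger_{d_2} + \epsilon_{\mathrm{bdry}} \equiv \maltese_1^k \pmod{2}$, where $\epsilon_{\mathrm{bdry}}$ measures the discrepancy between the product orientation on the codimension-$1$ stratum and the boundary orientation inherited from the ambient oriented $1$-manifold $\Discbar_d(x_0;\vec{x})$. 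This calculation is carried out in detail in \cite[Chapter 12]{seidel-book} in a formally identical setup, and I would invoke it directly rather than rederive it. Once the signs match, the vanishing of the algebraic boundary count of $\Discbar_d(x_0; \vec{x})$ yields (\ref{eq:a_infty_property}), which is the desired $A_\infty$ relation.
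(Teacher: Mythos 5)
Your argument matches the paper's: the proposition is derived by examining the one-dimensional moduli spaces $\Discbar_d(x_0;\vec{x})$, applying Lemma \ref{lem:boundary_moduli_spaces_a_infty} to identify the boundary with broken configurations indexed by $(d_1,d_2,k,y)$, recognizing these as the compositions $\mu_{d_1}\circ\mu_{d_2}$ after the consistency/rescaling bookkeeping, and deferring the sign verification to \cite[Chapter 12]{seidel-book}. This is precisely the route the paper takes, so the proposal is correct and follows the same approach.
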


\section{The Pontryagin Category}\label{sec: Pontryagin_Cat}

Let $\LL$ be any path connected topological space. Consider the topological category with objects the points of $\LL$, and morphisms from $L^0$ to $L^1$ is given by the Moore path space
\begin{equation}
\Omega(L^0,L^1) \equiv \{\gamma: [0,R] \to \LL|   \gamma(0)= L^0, \gamma(1) = L^1\}
\end{equation}
Here $R$ is allowed to vary between $0$ and infinity including infinity. 
The composition is given by concatenation:
\begin{equation}
	\begin{aligned}
	\Omega(L^0, L') \times \Omega(L', L^1) \to &\Omega(L^0, L^1),\\
	(\gamma_1, \gamma_2)\to \gamma_2 \cdot \gamma_1(l) &\equiv 
	\begin{cases}
	\gamma_1(l) &\textrm{ if } 0 \leq l \leq R_1 \\
	\gamma_2(l-R_1) &\textrm{ if } R_1 \leq l \leq R_1 + R_2,
	\end{cases}
		\end{aligned}
\end{equation}
where $\gamma_i$ has domain $[0,R_i]$.

This formula defines an associative composition of paths. In order for this operation to induce the structure of a DGA on chains, we use cubical chains as in \cite{Ab12} instead of singular chains. As in \cite{Mas} we define a map from a cube to a topological space to be \textit{degenerate} if it factors through the projection to a face. The underlying vector space for the cubical chain complex is:
\begin{equation}\label{eq:cubical_chains}
	C_i(X)=\frac{\Z [\text{Map}([0,1]^i, X)]}{\Z[\text{degenerate maps}]}.
\end{equation}
We write $\delta_{k,\epsilon}$ to denote the inclusion of the codimension -$1$ face where the $k$th coordinate is equal to $\epsilon \in \{0,1\}$. The differential of $C_*(X)$ is defined as 
\begin{equation}
	\del \sigma = \sum_{k=1}^i \sum_{\epsilon=0,1}\del_{k,\epsilon} \sigma = \sum_{k=1}^i \sum_{\epsilon=0,1}(-1)^{k+\epsilon} \sigma\circ \del_{k,\epsilon}.
\end{equation}
Since the product of cubes is still a cube, it is easy to define the following map
\begin{equation}
	C_*(X) \times C_*(Y) \to C_*(X\times Y),
\end{equation}
which may easily checked to be associative. Now we apply this to the path space $\Omega_\LL$, and obtain a DG category. Let 
\begin{equation}\label{def:Pontryagin_category}
	\Moore(\LL)
\end{equation} denote the differential graded category whose objects are points of $\LL$ and morphism spaces
\begin{equation}
	\text{Hom}_*(L^0,L^1)=C_*(\Omega(L^0,L^1)).
\end{equation}
The differentials and the products are given by
\begin{align}
	\mu^P_1 \sigma& = \del \sigma,\\
	\mu^P_2(\sigma_2, \sigma_1)&= (-1)^{|\sigma_1|}\sigma_2 \cdot \sigma_1.
\end{align}
We may consider this as an $A_\infty$ category by defining $\mu^P_d = 0$ for $d >2$.
Notice morphisms in this category satisfy $A_\infty$ relations:
\begin{equation}
	\sum_{m,n}(-1)^{\maltese_n} \mu^P_{d-m+1}(\sigma_d,\cdots,\sigma_{n+m+1},\mu^P_m(\sigma_{n+m}, \cdots \sigma_{n+1}), \sigma_n,\cdots, \sigma_1)=0,
\end{equation}
namely \[\mu_1(\mu_2(\sigma_2,\sigma_1))+\mu_2(\sigma_2,\mu_1(\sigma_1))+(-1)^{|\sigma_1|+1}\mu_2(\mu_1(\sigma_2), \sigma_1)=0
\] 
in this situation.
\begin{rem}
Since we assumed that the space $\LL$ is path connected, every object is quasi-isomorphic to any other object. In particular, if $L \in \LL$ we may fully faithfully embed the $A_\infty$ category with the single object  $L$ and morphism $C_{*}(\Omega(L, L)$ to the Pontryagin category:
\begin{equation}
(L,C_{*}(\Omega(L, L)) )\to \Moore(\LL).
\end{equation}
\end{rem}
\begin{rem}
 In the next section $\LL$ is the space $\tLag_b$ of exact Lagrangians with Pin structure relative to $b$ as in Equation (\ref{eq: $Lag_b$}); we obtain the differential graded category $\Moore(\tLag)$ from the above constructions, from which we would like to construct a functor to the wrapped Fukaya category of the ambient symplectic manifold in the following sections.
\end{rem}
\begin{rem}
	The functor $f$ constructed by Abouzaid in \cite{Ab12} is mapped into the triangulated closure of the image of $\Moore(\LL)$ in its category of modules under the Yoneda embedding. A practical model is the category of twisted complexes (\cite[Section 3l]{seidel-book}), where the objects are direct sums of shifted objects in $\Moore(\LL)$ and the differentials incorporate the morphisms between the shifted objects in the same direct sum. However, in our situation, since the loops of Lagrangians are generated by cotangent fibers of loops on the zero section of a cotangent bundle, $f$ just maps each loop of Lagrangians back to the loop on the zero section.
\end{rem}

\section{Construction of the functor}\label{sec:Functor}

Suppose we are given a open Liouville manifold $(M, \omega)$ that comes from the completion of a Liouville domain $M^{in}$,  with cylindrical end $\del M^{in} \times (0, \infty)$. From the two previous sections, we obtain its wrapped Fukaya category $\CW^*(M)$, with operations $\mu_d$ satisfying the $A_\infty$ relations \eqref{prop:a_infty_structure},
and the differential graded category $\Moore(\tLag)$ of exact Lagrangians with Legendrian boundaries with morphisms $\mu_1^{\Moore}, \mu_2^{\Moore}$.
Our goal in this section is to construct a functor from $\Moore(\tLag)$ to $\CW^*(M)$ that also preserves the $A_\infty$ structures. This is just Theorem \eqref{thm: main theorem in introduction} in the introduction. We write it here for the reader's convenience.
\begin{thm}\label{main theorem}
	For any open Liouville manifold $(M, \omega)$ with cylindrical ends, there exists an $A_\infty$ functor $\F$ from the triangulated closure of the Pontryagin category $\Moore(\tLag)$ of exact Lagrangians with Legendrian boundaries decorated with a Pin structure to the wrapped Fukaya category $\CW^*(M)$ of the Liouville manifold:
	\begin{equation}
		\Moore(\tLag) \xrightarrow{\F}  \CW^*(M).
	\end{equation}
On the level of objects, the functor sends every graded Lagrangian with Pin structure $\LL=(L, P^\#)$ to itself; on the level of homomorphisms, we have:
\begin{align*}
Mor(\Moore(\tLag_b)) &\xrightarrow{\mathcal{F}} \CW(M), \\
C_{*}(\Omega_{\LL_1,\LL_0} \Lag_b) &\mapsto \CW^{-*}(\LL_1,\LL_0;H,J)
\end{align*}
where the pair $(H, J)$ is a generic choice of Floer data that makes the moduli space of solutions to Floer equations regular.
\end{thm}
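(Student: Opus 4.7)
The plan is to construct $\F=(\F^d)_{d\ge 1}$ from parametrized moduli spaces $\Disc^\F_d(x_0;\vec\sigma)$ of holomorphic discs with one output boundary puncture $\xi^0$ asymptotic to a chord $x_0\in\Chord(\LL_0,\LL_d)$ and $d-1$ ordered interior boundary marked points $p_1<\cdots<p_{d-1}$ that subdivide the remaining boundary arc into $d$ consecutive sub-arcs. The $k$-th sub-arc carries a moving Lagrangian boundary condition traced out by $\sigma_k(\vec s_k)$ as $\vec s_k$ ranges over the cube $[0,1]^{n_k}$, where $n_k=\dim\sigma_k$. Because $\sigma_k\in\Omega(\LL_{k-1},\LL_k)$, these conditions glue continuously at each $p_j$ (matching $\LL_j$ on both sides of $p_j$) and match the chord $x_0$ at $\xi^0$ (bounded by $\LL_0$ and $\LL_d$).

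Following the universal-and-consistent framework of Section~\ref{sec:A_inftysubsection}, I would fix Floer data $(H_S,J_S,\gamma_S,\vec w)$ on the fibers of the universal curve over the Deligne--Mumford compactification of the relevant abstract moduli of $d$-pointed once-punctured discs, with quadratic-at-infinity Hamiltonians and weights $\vec w$ chosen so that Liouville rescalings on boundary strata identify the restricted data with that already used for the $\mu_d$ operations of Section~\ref{sec:A_inftysubsection} and for the inductively defined $\F^{<d}$. Standard Sard--Smale arguments then give regularity for generic choices, and exactness of Lagrangians together with quadratic-at-infinity $C^0$ estimates (as in Corollary~\ref{cor: compactification_strip_moduli} and Lemmas~\ref{lem:regularity_compactness_2_inputs}--\ref{lem:boundary_moduli_spaces_a_infty}) yield a compact manifold-with-corners compactification $\overline{\Disc^\F_d}(x_0;\vec\sigma)$ of the expected virtual dimension. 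One then defines $\F^d(\sigma_d,\ldots,\sigma_1)=\sum_{x_0}\#\Disc^\F_d(x_0;\vec\sigma)\,[x_0]$ by counting rigid elements, with signs coming from the orientation lines $o_{x_0}$ as in \cite[Appendix~A]{Ab12} and the cubical sign convention of \eqref{eq:cubical_chains}.

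The $A_\infty$ functor equation then follows from enumerating the codimension-one strata of the $1$-dimensional components of $\overline{\Disc^\F_d}$. These strata are of four types: (i) cube faces $\vec s_k\in\del[0,1]^{n_k}$, assembling into $\F^d(\ldots,\mu^P_1\sigma_k,\ldots)$; (ii) collisions $p_k=p_{k+1}$ of adjacent interior marked points, where the collapsed sub-arc forces the boundary condition on the merged arc to equal the Moore concatenation, giving the term $\F^{d-1}(\ldots,\mu^P_2(\sigma_{k+1},\sigma_k),\ldots)$; (iii) output strip-breaking at $\xi^0$, giving $\mu_1\,\F^d(\vec\sigma)$; and (iv) tree bubblings in which, for each ordered partition $d=i_1+\cdots+i_r$, a cluster of $i_k$ consecutive sub-arcs bubbles off into a sub-disc carrying a $\Disc^\F_{i_k}$ configuration whose output chord $y_k$ becomes the $k$-th input of a remaining main disc of $\Disc^W_r$ type, contributing $\mu_r(\F^{i_r}(\vec\sigma^{(r)}),\ldots,\F^{i_1}(\vec\sigma^{(1)}))$. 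Since $\mu^P_{\ge 3}=0$, no further terms appear; a sign analysis in the style of \cite[Chapter~12]{seidel-book} and \eqref{eq:dagger_sign} produces exactly the $A_\infty$ functor identity.

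I expect the main obstacle to be the \emph{coherent} choice of Floer data: along every codimension-one boundary stratum, the data must restrict to the data used to define $\mu_r$ and the inductively-constructed $\F^{<d}$ (up to the automatic Liouville rescaling $\psi^C$ of \cite[Section~3]{Ab12} enforced by the weight conventions), while remaining generic enough for transversality throughout the entire tower of moduli spaces. A secondary subtlety lies in stratum (ii): one must verify that the boundary condition emerging from the collapsed sub-arc is exactly the strict Moore concatenation $\sigma_{k+1}\cdot\sigma_k$ rather than a homotopic but reparametrized variant --- this is precisely why the cubical chain model \eqref{eq:cubical_chains}, with its degenerate-cube quotient and strictly associative concatenation, is the appropriate chain model for $\Moore(\tLag)$ rather than singular chains.
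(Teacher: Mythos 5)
Your high-level plan is the same as the paper's: build $\F^d$ from moduli spaces of once-punctured discs with moving Lagrangian boundary conditions dictated by the cubical chains $\sigma_k$, choose universal and consistent Floer data with Liouville-rescaled weights so that boundary strata restrict to lower-order data, and read off the $A_\infty$ functor identity from the codimension-one strata. The strata types (iii) and (iv) you list and the role of the weight conventions match the paper. However, the geometric model you actually propose has a genuine gap that would prevent the argument from going through.

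You put $d-1$ boundary marked points $p_1<\cdots<p_{d-1}$, dividing the boundary into $d$ arcs each of which carries a \emph{moving} boundary condition. The paper instead uses $2d$ boundary marked points $z_1,\ldots,z_{2d}$: the arc $[z_{2i-1},z_{2i}]$ carries the moving condition $\sigma_i$ while the arc $[z_{2i},z_{2i+1}]$ carries the \emph{constant} Lagrangian $L_i$, with the ratios of the moving-arc lengths pinned to the Moore lengths $|\sigma_i|$. These interpolating constant arcs are not a cosmetic difference. First, they supply the free moduli of the domain: the abstract space $\ZZ_{2d}^{\vec l}$ has dimension $d-1$ (the constant-arc lengths modulo the affine automorphisms), which is exactly what is needed for $\Disc_{2d+1}(\vec\sigma;x)$ to have dimension $\sum|\sigma_i|+\mu(x)+d-1$ and hence for $\F^d$ to have cohomological degree $1-d$; in your model the finite arc lengths are already pinned by the Moore lengths of the interior $\sigma_k$'s, so after quotienting by automorphisms there are no domain moduli left and the grading is off by $d-1$. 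Second, and more importantly, your mechanism for producing the $\mu^P_2$ term cannot work: colliding $p_k$ with $p_{k+1}$ would shrink the arc carrying $\sigma_{k+1}$, which is either forbidden (if its length is fixed to $|\sigma_{k+1}|$) or would force $\sigma_{k+1}$ to degenerate rather than be concatenated. The correct mechanism, which requires the constant arcs, is the collision $z_{2k}\to z_{2k+1}$: the constant arc $L_k$ shrinks to a point, the two flanking moving arcs of lengths $|\sigma_k|$ and $|\sigma_{k+1}|$ become contiguous, and the resulting boundary condition on the merged arc is, on the nose, the Moore concatenation $\sigma_{k+1}\cdot\sigma_k$. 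Finally, your stratum (iv) implicitly relies on all moving arcs shrinking \emph{simultaneously} so that entire clusters bubble off to form a $\mu_r\circ(\F^{i_r}\otimes\cdots\otimes\F^{i_1})$ term; in the paper this is forced by the fixed-ratio constraint on the moving-arc lengths (Case~(\ref{case: exotic point collision}) of the abstract moduli $\ZZbar_{2d}$), a structural feature your model does not incorporate. To repair the proposal, replace the $d-1$-point model by the $2d$-point model $\ZZ_{2d}^{\vec l}$ of Section~\ref{sec:abstract_moduli_space_of_upper_half_plane}: the two degenerations (constant arc shrinks; all moving arcs shrink simultaneously with a partition into clusters) then produce exactly the $\mu^P_2$ and $\mu_r\circ\F^{\otimes}$ terms, and together with strip-breaking at $\xi^0$ and cube faces $\del\sigma_k$ they give the $A_\infty$ functor equation.
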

In the above theorem we have suppressed the notation of $(L,,P^\#)$ of Pin structures in the notations above when it is clear.

We shall give a proof of this theorem in the following sections.
\begin{rem}
	The above theorem generalizes the functors in \cite{AbSc10} and \cite{Ab12} to the based loop space of Lagrangian embeddings with Legendrian boundary.
\end{rem}

\subsection{Chain map}\label{sec:chain_map}
Before proving Theorem (\ref{main theorem}), let us consider the geometric set up of the domain.
 We work over the upper half plane $\bb{H}$ with 2 marked points  $z_1, z_2$ on the boundary, which is bi-holomorphic to $U=\bb{D}^2\setminus \{1 \text{ point on } \del D^2 \}$, let $\xi$ denote the punctured point on the boundary of $U$. The group of automorphisms of $\HH$ that preserve the point at infinity is the intersection of $PSL(2;\R)$ and the affine transformations Aff $(\R^2)$, namely $z \mapsto az+b$ where $a\in \R_{>0}, b\in \R$. Thus after an automorphism, we may assume $z_1=(0,0), z_2=(1,0)$. 

To have a proper definition of a moduli space of pseudo-holomorphic curves, we work over the space $\JJ(M)$ of almost complex structures of contact type along cylindrical ends of $M$ defined in Section (\ref{sec:almost_complex_structure_in_general}). Furthermore, we allow more freedom by taking a $k$-dimensional family of almost complex structures so we consider the set $\JJ_{U, k}(M)$, which denotes the space of families of smooth almost complex structures of contact type on $M$ parametrized by $U\times I^k$ and such that for any $\tau \in I^{k}$,
\[
J_{\tau, z} \in \JJ(M)
\] 
only depends on $t$ when $s\to \infty$ at the strip-like end $\epsilon^0: Z_{-} = (-\infty, 0]\times [0,1] \to U$ near the puncture $\xi$ parametrized by the $s, t$ coordinates and that these almost complex structures near the strip-like end $\epsilon^0$ is independent of $ \tau$; in other words, there is a $1$-parameter family of almost complex structure $J_t$ parametrized by $t \in [0,1]$ such that for every $\tau$,
\begin{equation}\label{eq:almost_complex_structure_at_strip_like_ends}
	J_{\tau, \epsilon^0(s,t)} = J_t \text{ when $s \ll 0$}.
\end{equation}
We also require that this $k$-family of almost complex structures satisfies the following: for any family $\{u_\tau\}: U \to M$ that are solutions of the Cauchy-Riemann Equation (\ref{eq:CauchyRiemannEq}) such that $u_\tau$ sends the boundary of $U$ between $z_1$ and $z_2$ to Lagrangians in the path given by $\sigma(\tau)$ (the exact condition is (\ref{eq:CR_for_upper_half_plane}) below), the corresponding linearized $\dbar$ operator for $J_{\tau, z}$ is surjective.
This requirement can be achieved inductively on $k$ (the dimension of the cube): for a single point we just pick a generic $J_z$ by standard arguments of transversality as in \cite[chapter 3]{MS04}; for higher $k$ suppose we have picked $J_{\tau, z}$ for one $k-1$ chain in the boundary strata of the family parametrized by $I^k$, we may simply extend it smoothly to other $k-1$ chains of $\del \sigma$, extend to a neighborhood of $\del \sigma \simeq \del I^k$ and then extend to the whole $k$-chain by contractibility of $\JJ(M)$. Note that during the extensions we can modify the almost complex structures $J_{\tau,z}$ via a family of infinitesimal deformations $\{K_\tau\}$ where $K_\tau \in End(TM, J_\tau, \omega)$ by taking almost complex structures of the form $J_{\tau}\exp(-J_{\tau}K_\tau)$ as in Equation \eqref{eqn:almost_complex_structure_by_perturbation} to make the linearized operator surjective.

The above space of almost complex structures has a subspace that is a Banach manifold as we can give a local neighborhood of every element $J \in \JJ_{U, k}$ by taking the exponential map of a small infinitesimal deformation in a suitable Banach space: $K \in C^\infty_{\vec{\epsilon}, M }(I^k, End(TM, J, \omega))$ of all sections such that the norm $||K\tau||_{\vec{\epsilon}}$ is finite; here the above norm $||\cdot||_{\vec{\epsilon}}$ is defined in \cite{Fl88} as
\begin{equation}\label{norm: Floer}
	||K_\tau||_{\vec{\epsilon}} := \sum_{l = 1}^{\infty}\epsilon_l ||K_\tau||_{C^l}.
\end{equation}
When the sequence $\epsilon_l$ converges to zero sufficiently rapidly, there exists sections $K \in C^\infty_{\vec{\epsilon}, M }(I^k, End(TM, J, \omega)) $ which are supported in arbitrarily small neighborhood of any point $x \in M$ and $\tau \in I^k$ and take arbitrary small values.

We denote a $k$ family of $U$-parametrized almost complex structures that satisfies the above conditions by $J_{\tau, z}$ or $J_{\tau}$ if there is no confusion.

We would like to consider a map:
\[ H :  U \to \Cal{H}(M),
\]
satisfying quadratic growth at infinity as in (\ref{eq:quadratic_Hamiltonian}). Let $X_U$ denote the Hamiltonian flow of $H$, and consider the following Cauchy-Riemann equation:
\begin{equation} \label{eq:CR_for_upper_half_plane}
	J_{\tau} (du-X_U\otimes \gamma) - (du-X_U\otimes \gamma)\circ j_U = 0
\end{equation}
for maps $u: U \to M$. 


Given $\sigma$ representing an element in $C_{k}(\Omega_{L_0, L_1} \tLag)$, namely $\sigma: [0,1]^{k} \to \Omega_{L_0, L_1} \tLag$, for any $\tau \in I^{k},$ where $\sigma(\tau)$ is a path of Lagrangians $[0,R] \to \Moore_{L_0, L_1} \tLag$ between $L_0, L_1$ for every $\tau \in I^{k}$, let $|\sigma(\tau)|$ denote the length of this path; we also fix $x \in \Chord(\LL_0, \LL_1)$.
	
	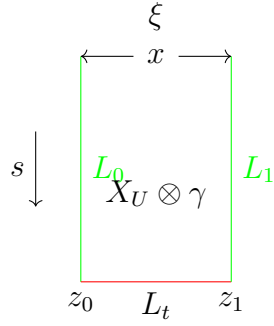
\begin{figure}\label{fig: upper_half_plane}
		\begin{tikzpicture}
		\draw[color = green](0,0) to (0,3);
		\draw[color = red](0,0) to (2,0);
		\draw[color = green](2,0) to (2,3);
		\coordinate[label = below: $z_0$]($z_0$) at (0,0);
		\coordinate[label = below:$z_1$]($z_1$) at (2,0);
		\coordinate[label = right: \textcolor{green}{$L_0$}]() at (0, 1.5);
		\coordinate[label = right: \textcolor{green}{$L_1$}]() at (2,1.5);
		\coordinate[label = below: $L_t$]() at (1,0);
		\coordinate[label = above: $\xi$]() at (1,3.2);
		\coordinate[label = below: $X_U\otimes \gamma$]() at (1, 1.5);
		\coordinate[label = center: $x$]() at (1,3);
		\draw[<-] (0,3) to (0.7,3);
		\draw[->](1.3,3) to (2,3);
		\draw[->](-.6, 2) to (-.6, 1);
		\coordinate[label = left:$s$] () at (-.6, 1.5);
		
		\end{tikzpicture}
		\caption{Upper half plane}
	\end{figure}
	 
	 \begin{defin}[Moduli space of upper half plane]\label{def:moduli_of_Disc_3}
	 	 We define the moduli space of pseudo-holomorphic maps with moving Lagrangian boundary $\CM(x, \sigma, H, J_{\tau})$ to be pairs $(u, \tau)$ where $\tau \in I^{k}$, and $u: U\to M$ satisfying the following condition:
	\begin{equation} \label{eqn: conditions_for_Disc_3}
	\left \{
	\begin{aligned}
	& (du-X_U\otimes \gamma)\circ j_U - J_{\tau, z} (du - X_U\otimes \gamma)=0, \\
	& u(z) \in L_0 \text{ if $z \in \del U$ lies on the segment between $\xi$ and $z_1$ },\\
	& u(z) \in L_1 \text{ if $z \in \del U$ lies on the segment between $\xi $ and $z_2$},\\
	& u(z) \in L_{\sigma(\tau)(\frac{t}{|\sigma(\tau)|})} \text{ if $z=(t,0) \in \del U$ lies between $z_1$ and $z_2$, }\\
	 &\lim_{s \rightarrow + \infty} u(\epsilon^0(s,t )) = x(t), \text{(asymptotic condition)}\\
	\end{aligned}
	\right \}
	\end{equation}
\end{defin}
Standard arguments involving Sard-Smale and transversality results in Floer theory give:
\begin{lem}
	For generic choices of Hamiltonians satisfying Equation (\ref{eq:quadratic Hamiltonian}) and a generic family of almost complex structures $J_{\tau} \in \JJ(M)$, $\CM(x,\sigma, H, J_{\tau})$ is a smooth manifold of dimension $\mu(x)+|\sigma|$, where $|\sigma| = k$ denote the dimension of the cubical chain.
\end{lem}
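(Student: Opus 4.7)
The plan is to combine a Sard--Smale argument on a universal moduli space with a Riemann--Roch index computation, with the main work going into parametric transversality over the cube $I^{k}$. First I would set up a Banach manifold $\mathcal{B}$ of maps $u\colon U \to M$ in an appropriate weighted $W^{1,p}$ space (with exponential decay along the strip-like end $\epsilon^{0}$) satisfying the moving Lagrangian boundary conditions \eqref{eqn: conditions_for_Disc_3} prescribed by $\sigma$. The linearization of \eqref{eq:CR_for_upper_half_plane} at a solution $(u,\tau)$ is a Fredholm operator $D_{u,\tau}\colon T_{u}\mathcal{B} \to \mathcal{E}_{u}$, and the parametrized moduli space will be smooth whenever $D_{u,\tau}$ together with the derivative in $\tau$ is surjective.

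Second, I would introduce the universal moduli space $\mathcal{M}^{\mathrm{univ}}$ where the pair $(H,J_{\tau})$ is allowed to range over a Banach manifold of Floer data, using the $\vec{\epsilon}$-norm \eqref{norm: Floer} to ensure existence of bump-like perturbations supported in arbitrarily small open sets of $M \times I^{k}$. The key step is to verify that the linearization of the universal $\dbar$-equation is surjective at every solution. This is the standard argument: if $\eta$ lies in the cokernel, then $\eta$ satisfies the formal adjoint equation, so by unique continuation and the open-mapping property of pseudo-holomorphic curves there is an interior point $z_{0}\in U$ with $\eta(z_{0})\neq 0$ and $du(z_{0})$ injective; one then builds $K \in C^{\infty}_{\vec{\epsilon}}\bigl(I^{k},\mathrm{End}(TM,J,\omega)\bigr)$ supported in a small neighborhood of $(z_{0},\tau)$ whose pairing with $\eta$ is nonzero, contradicting $\eta \in \mathrm{coker}$. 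Near the strip-like end the constraint \eqref{eq:almost_complex_structure_at_strip_like_ends} fixes $J$ to depend only on $t$, and the non-degeneracy assumption \eqref{eq:non-degenerate_chord} on chords forces asymptotic decay of $\eta$, so no perturbation is needed on that end.

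Third, applying Sard--Smale to the projection $\pi\colon \mathcal{M}^{\mathrm{univ}} \to \{(H,J_{\tau})\}$ yields a comeager set of regular values, and for such a choice of Floer data the parametrized moduli space $\CM(x,\sigma,H,J_{\tau})$ is a smooth manifold. In practice I would perform this inductively over the skeleton of $I^{k}$: given a regular family on $\partial I^{k}$ (already provided by transversality at lower cube-dimension), one extends smoothly into the interior using the contractibility of $\JJ(M)$, and then applies a further $\vec{\epsilon}$-small perturbation supported strictly in the interior of $I^{k}$ to restore regularity on the full cube without destroying it on the boundary.

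Finally, the dimension count is a Riemann--Roch calculation for a disc with one boundary puncture, totally real boundary conditions given by the path $\sigma(\tau)$, and asymptotic chord $x$ at $\xi$. The index of $D_{u,\tau}$ (for a fixed $\tau$) equals $\mu(x)$, as can be confirmed by comparing with the strip moduli spaces of Corollary~\ref{cor: compactification_strip_moduli}: capping off with a second strip-like end asymptotic to some $x_{1}$ would give dimension $\mu(x)-\mu(x_{1})$, matching the un-quotiented strip dimension. Adding the $k$ parameters from $\tau \in I^{k}$ yields total dimension $\mu(x)+k = \mu(x)+|\sigma|$, as claimed. The main obstacle throughout is not the index calculation but arranging transversality coherently across $I^{k}$ while respecting the prescribed boundary behavior of $J_{\tau}$ at the strip-like end; this is exactly what the inductive construction above is designed to handle.
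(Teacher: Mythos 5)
Your proposal is correct and follows essentially the same route the paper intends: the paper does not actually spell out a proof of this lemma, invoking only ``standard arguments involving Sard--Smale and transversality,'' but the setup it provides in Section~\ref{sec:chain_map} --- the Banach manifold structure on $\JJ_{U,k}(M)$ via the Floer norm \eqref{norm: Floer}, the explicit inductive construction of $J_{\tau,z}$ over the skeleton of $I^{k}$ using contractibility of $\JJ(M)$, and the perturbation scheme $J_{\tau}\exp(-J_{\tau}K_{\tau})$ --- is exactly the scaffolding your sketch fills in, and your dimension count $\mu(x)+k$ agrees with the degree conventions ($\F^{1}\colon C_{k}\to \CW^{-k}$, so rigid configurations satisfy $\mu(x)+|\sigma|=0$).

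One small slip worth correcting: in the surjectivity argument you ask for an interior point $z_{0}$ with $\eta(z_{0})\neq 0$ \emph{and} $du(z_{0})$ injective. For the inhomogeneous Floer equation the right condition is not that $u$ is an immersion at $z_{0}$ (solutions may be nowhere immersive, e.g.\ trivial chords), but that $(du - X_{H}\otimes\gamma)(z_{0})\neq 0$; unique continuation guarantees this holds on an open dense set unless $u$ is the constant flow line, and that is all one needs for a domain-dependent perturbation $K$ supported near $(z_{0},u(z_{0}),\tau)$ to pair nontrivially with $\eta$ in the linearized equation, as in the standard Floer--Hofer--Salamon argument cited via \cite{FHS}. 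With that correction the argument closes.
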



From the standard transversality results in Floer theory we realize that the dimension of this moduli space is independent of $H,J_{\tau}$ when it is regular, thus we can omit it and write $\CM(x,\sigma)$ when there is no confusion. 

Fixing a choice of $H, J_{\tau}$, we would like to study the boundary of the Gromov compactification of $\Disc(x,\sigma,H,J_{\tau})$.
Since there are three special points on the boundary of the domain (two markings and one puncture with a strip-like end), there is no modulus. If $\tau$ is fixed, the only strata that we need to add to the Gromov-compactification for the moduli space with boundary condition given by $\sigma(\tau)$ are obtained by considering breakings of strips at the ends. This gives a map:
\begin{equation}\label{eq:boundary component by breaking at ends for Disc_3}
	\Disc(\sigma; y) \times \Disc(y; x)  \to \Discbar(\sigma; x),
\end{equation}
for all possible $y \in \Chord(\LL_0, \LL_1)$. 

On the other hand, we also need to take into consideration of the boundary strata $\sigma' \subseteq \del \sigma$ of the $k$-dimensional cubical chain $\sigma$, which gives:
\begin{equation} \label{eq: boundary component by boundary of cubical chain for Disc_3}
	\Disc(\sigma'; x) \to \Discbar(\sigma; x).
\end{equation}

The standard compactness result gives the components of the boundary strata of the compactification $\Discbar(\sigma; x)$ in the following proposition.
\begin{prop}\label{prop:boundary strata of Discbar_3}
	The Gromov compactification $\bar{\Disc}(x; \sigma)$ is a compact manifold whose boundary is covered by the images of the following strata.
	
	\begin{align} \label{eq:3_pointed_disc_top_break}
	&\coprod_{y \in \Chord(L_0,L_1)} \Disc(y; \sigma) \times \Disc(x;y), \\
	&\coprod_{\sigma' \subseteq \del \sigma} \Disc(x; \sigma').\label{eq:3_pointed_disc_chain_break}
	\end{align}\qed
\end{prop}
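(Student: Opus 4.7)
The plan is to follow the standard Gromov compactness and gluing paradigm for Floer-theoretic moduli spaces, adapted to the present set-up in which the domain $U$ has no moduli (three special boundary points fix all conformal automorphisms) but the parameter space carries an additional cube factor $I^{k}$ coming from the chain $\sigma$. Consequently, degeneration of a sequence $(u_n,\tau_n)\in\Disc(x;\sigma)$ can only come from two sources: (a) non-compactness in the map direction along the strip-like end $\epsilon^{0}$, and (b) non-compactness in the cube direction, i.e.\ $\tau_n\to\tau_\infty\in\partial I^{k}$. These are exactly the two strata listed in \eqref{eq:3_pointed_disc_top_break} and \eqref{eq:3_pointed_disc_chain_break}.

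First I would establish the a priori bounds. The $C^{0}$-bound on $u_n$ (preventing escape to the infinite cone) follows from the integrated maximum principle applied to the quadratic Hamiltonian satisfying \eqref{eq:quadratic Hamiltonian}, together with the contact-type condition \eqref{eq:contact type} on the almost complex structures; this is the argument to be recorded in the appendix and is completely analogous to \cite[Lemma 7.2]{AbSe09}. The energy bound comes from the exactness hypothesis: the fact that $\theta|_{L}=df_{L}$ with the primitive $f_{L}$ chosen continuously in $L\in\tLag$, combined with the continuous variation of the Lagrangian boundary along $\sigma(\tau)$, allows the geometric energy $\int_{U}\tfrac{1}{2}|du-X_U\otimes\gamma|^{2}$ to be bounded by a topological quantity involving only the action at the asymptotic chord $x$ and the primitives $f_{L_i}$ at the endpoints of $\sigma(\tau)$, which is uniformly bounded in $\tau\in I^{k}$ since $\sigma$ is continuous on a compact cube.

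Next I would rule out sphere and disc bubbling using exactness of $\omega$ and of every Lagrangian appearing along $\sigma(\tau)$, so any bubble would have vanishing symplectic area and hence be constant. With bubbling excluded, Gromov compactness forces a convergent subsequence after passing to a broken configuration. Since the domain $U$ has no conformal moduli, no bubbling of the domain can occur at the two corners $z_{1},z_{2}$ (where the boundary condition changes continuously through $\sigma(\tau)$) nor at the puncture $\xi$ other than the usual cylindrical breaking; this produces a limit of the form $u_\infty^{main}\in\Disc(y;\sigma(\tau_\infty))$ together with a finite chain of Floer strips accounting for the loss of energy along $\epsilon^{0}$. Standard gluing at strip-like ends (as in \cite{Ab08}, reviewed in the appendix) shows that such broken configurations consist of a single Floer strip concatenated with one principal component, giving exactly \eqref{eq:3_pointed_disc_top_break}. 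The remaining case $\tau_n\to\partial I^{k}$ yields an element of $\Disc(x;\sigma')$ for a codimension-$1$ face $\sigma'\subset\partial\sigma$, which is \eqref{eq:3_pointed_disc_chain_break}.

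Finally, to upgrade the set-theoretic description to the claim that $\Discbar(x;\sigma)$ is a manifold with boundary, I would apply the standard gluing theorem near each codimension-$1$ stratum: for the strip-breaking stratum \eqref{eq:3_pointed_disc_top_break} one glues a Floer strip onto the main component along a large gluing parameter $R\in[R_{0},\infty)$, producing a collar chart; for \eqref{eq:3_pointed_disc_chain_break} the collar is automatic from the product structure $I^{k}=\partial I^{k}\times[0,\epsilon)$ on the parameter, and the surjectivity of the linearized $\dbar$ operator (which was built into the inductive choice of $J_{\tau,z}$ over $\partial\sigma$ and then extended) ensures that the moduli space is cut out transversally up to the boundary. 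I expect the main technical obstacle to be the transversality/gluing analysis at the corners where strip-breaking and cube-boundary degeneration occur simultaneously: one needs the chosen family $J_{\tau,z}$ to be regular on the full boundary stratification of $I^{k}$ so that the compatibility diagrams analogous to \eqref{eq:corner_chart_moduli_space} make $\Discbar(x;\sigma)$ a $\langle k+\mu(x)\rangle$-manifold; this is arranged by the inductive genericity argument already sketched in Section \ref{sec:chain_map} and the norm \eqref{norm: Floer}.
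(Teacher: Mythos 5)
Your proposal is correct and follows essentially the same approach the paper invokes: the paper relegates this proposition to ``the standard compactness result'' after observing that the domain has no conformal moduli, so the only degenerations are strip-breaking along $\epsilon^0$ and passage of $\tau$ to a face of $I^k$, which is exactly your dichotomy (a)/(b). Your additional detail on energy bounds from exactness, the $C^0$ bound via the integrated maximum principle, exclusion of bubbling, and gluing to obtain the boundary collar is the standard machinery the paper is implicitly citing, so there is no substantive divergence.
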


\subsection{The linear term.} Consider the following evaluation map:
\begin{equation}\label{eq:evaluation_3_pointed_disc}
ev: \Discbar(x;\sigma) \to \Chord(L_0, L_1),
\end{equation}
which takes every punctured disc $u$ to its infinite end $x$, a Hamiltonian chord between $L_0$ and $L_1$. If we consider the boundary strata (\ref{eq:3_pointed_disc_top_break}), we obtain a commutative diagram:
\begin{equation}\label{eq: evaluation on neck breaking boundary}
\begin{tikzcd}
\Disc(y;\sigma) \times \Disc(x;y) \ar{d} \ar{r} & \Discbar(x; \sigma) \ar{d}{ev}\\
\Disc(x;y) \ar{r} &\Chord(L_0, L_1).
\end{tikzcd}
\end{equation}
The first left vertical arrow is projection to the second factor and the top map is the inclusion of a boundary stratum. 

Meanwhile, if we consider the boundary strata (\ref{eq:3_pointed_disc_chain_break}), we get the following commutative diagram:
\begin{equation}\label{eq: evaluation on interior}
\begin{tikzcd}
\Disc(x; \sigma) \ar{dr} \ar{r}& \Discbar(x;\sigma) \ar{d}\\
&\Chord(L_0,L_1).
\end{tikzcd}
\end{equation}
\begin{lem}\label{lem:chain_map}
	There exist fundamental cycles $[\Discbar(x;\sigma)] \in C_*(\Discbar(x;\sigma))$ such that the assignment
	\begin{equation}
		\F^1(\sigma)= \bigoplus_{x} (-1)^{\dag } ev_*([\Discbar(x;\sigma)])
	\end{equation} where $\dag = \mu(x) + \mu(\sigma)$, defines a chain map in \eqref{main theorem}.
\end{lem}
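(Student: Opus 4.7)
The plan is to deduce the chain-map identity $\mu_1 \circ \F^1 = \pm\, \F^1 \circ \mu^{P}_1$ directly from the boundary stratification of $\Discbar(x;\sigma)$ given in Proposition~\ref{prop:boundary strata of Discbar_3}. The two strata~\eqref{eq:3_pointed_disc_top_break} and~\eqref{eq:3_pointed_disc_chain_break} are exactly the geometric realizations of the two composites appearing in the identity: pushing forward the first stratum under $ev$ and using the commutative square~\eqref{eq: evaluation on neck breaking boundary} produces the Floer differential $\mu_1$ composed with $\F^1$, while pushing forward the second stratum via~\eqref{eq: evaluation on interior} produces $\F^1$ applied to the cubical differential of $\sigma$. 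So the lemma reduces to (a) constructing coherent fundamental chains whose ordinary boundary realizes this stratification, and (b) checking signs.

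For (a), I would construct the chains $[\Discbar(x;\sigma)] \in C_{\mu(x)+|\sigma|}(\Discbar(x;\sigma))$ inductively on $\mu(x)+|\sigma|$. In the base case $\mu(x)+|\sigma|=0$ the moduli space is a finite set of rigid points, where the evident $0$-chain works. For the inductive step, the lower-dimensional chains $[\Disc(y;\sigma)]$, $[\Disc(x;y)]$, and $[\Discbar(x;\sigma')]$ have already been chosen, and the product structure on cubical chains in~\eqref{eq:cubical_chains} lets me take their exterior products along the inclusions in~\eqref{eq:3_pointed_disc_top_break}--\eqref{eq:3_pointed_disc_chain_break}. These assemble into a cycle on $\partial \Discbar(x;\sigma)$, which I then extend to a cubical chain on the whole compact topological manifold with boundary $\Discbar(x;\sigma)$; such an extension exists because a compact manifold with boundary deformation retracts onto a collar of its boundary, so any prescribed boundary cycle bounds a fundamental chain inside. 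Consistency of the inductive choices with the corner diagrams of type~\eqref{eq:corner_chart_moduli_space} ensures that higher-codimension strata get the same chain through different routes.

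With these choices in place, the cubical boundary operator commutes with $ev_*$, so
\[
\partial\,\F^1(\sigma) \;=\; \bigoplus_x (-1)^{\dag}\, ev_*\bigl(\partial [\Discbar(x;\sigma)]\bigr),
\]
and the right-hand side splits, via Proposition~\ref{prop:boundary strata of Discbar_3} together with the commutative diagrams~\eqref{eq: evaluation on neck breaking boundary}--\eqref{eq: evaluation on interior}, as the sum of a term matching $\mu_1(\F^1(\sigma))$ and a term matching $\F^1(\partial\sigma)$. The main obstacle will be sign bookkeeping: the sign $\dag = \mu(x)+\mu(\sigma)$ is chosen precisely so that it combines with the $(-1)^i$ in the Floer differential~\eqref{eq: wrap_differential} and with the Koszul sign $(-1)^{k+\epsilon}$ in the cubical boundary to produce a single uniform sign on both sides. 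Verifying this requires identifying the orientation of $\Discbar(x;\sigma)$ on each codimension-one boundary stratum with the product of the factor orientations, which I will import from the coherent orientation package of~\cite[App.~A]{Ab12} reviewed in the appendix; once those conventions are in place the matching reduces to a finite combinatorial check.
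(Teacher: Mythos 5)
Your proposal follows essentially the same strategy as the paper: build the fundamental chains inductively so that their boundary is the sum of products of fundamental chains over the strata in Proposition~\ref{prop:boundary strata of Discbar_3}, then push forward under $ev$ and match stratum~\eqref{eq:3_pointed_disc_top_break} with $\mu_1\circ\F^1$ and stratum~\eqref{eq:3_pointed_disc_chain_break} with $\F^1\circ\mu_1^P$ via the diagrams~\eqref{eq: evaluation on neck breaking boundary} and~\eqref{eq: evaluation on interior}. One small correction: your justification for the existence of the extension --- that a compact manifold with boundary deformation retracts onto a collar of its boundary --- is false (the disk does not retract onto a collar of $S^1$), and in any case ``any prescribed boundary cycle bounds'' is too strong. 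What is actually true, and all that is needed, is that the assembled boundary chain represents the fundamental class of $\partial\,\Discbar(x;\sigma)$, which is null\-homologous in $\Discbar(x;\sigma)$ by the long exact sequence of the pair $\bigl(\Discbar(x;\sigma),\partial\,\Discbar(x;\sigma)\bigr)$; this is the same appeal the paper makes implicitly.
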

Consider moduli spaces $\Discbar(x;\sigma)$ whose boundary only has codimension 1 strata, which must be products of closed manifolds. By taking the product of the fundamental chains of the factors in each boundary stratum, we obtain a chain in $C_*(\Discbar(x; \sigma))$:
\begin{multline}\label{eq:boundary_strata}
	(-1)^{\mu(x) + \mu(\sigma)}\sum_{x' \in \Chord(L_0,L_1)} [\Disc(x';\sigma)] \times [\Disc(x, x')] 
	+ \sum_{\sigma' \subseteq \del \sigma}[\Disc(x;\sigma')].
\end{multline} 
We now define the fundamental chain 
\[
[\Discbar(x;\sigma)]\]
to be any chain whose boundary is the sum of chains represented by equation (\ref{eq:boundary_strata}).

\begin{proof}[Proof of Lemma \ref{lem:chain_map}]
	Consider $\sigma: I^k \to \Omega_{\LL_0,\LL_1} \tLag$, fix $x \in \Cal{CW}^{-k}(\LL_1, \LL_0;H,J)$ for some Hamiltonian $H$ that is quadratic with respect to the radial coordinate $|p|$ at cylindrical ends of $M$ outside a compact set. 
We know that, up to signs which we shall ignore, proving that $\F^1$ is a chain map is equivalent to proving that 
\begin{equation}
	\mu^1_\CW\circ \F^1(\sigma)=\F^1 \circ \mu_1^{Tw}(\sigma). 
\end{equation}
From the two previous sections we know $\mu^{Tw}_1(\sigma)$ is just $\bigcup_{\sigma' \subseteq \del \sigma}\sigma'$ and $\mu^1_\CW(x)$ is a sum of Hamiltonian chords of Maslov index 1 higher than that of $x$ as shown in Equation \eqref{eq: wrap_differential}.
Going through the stratification of $\del \Discbar(x; \sigma)$, we find that the strata \eqref{eq:3_pointed_disc_top_break} for rigid moduli spaces $\Disc(x; y)$ correspond to $\mu^1_\CW\circ \F^1(\sigma)$ and that the strata \eqref{eq:3_pointed_disc_chain_break} correspond to $\F^1 \circ \mu_1^P(\sigma)$.
	Thus $\F^1$ satisfies the first $A_\infty$ relationship, namely it it a chain map.

\end{proof}

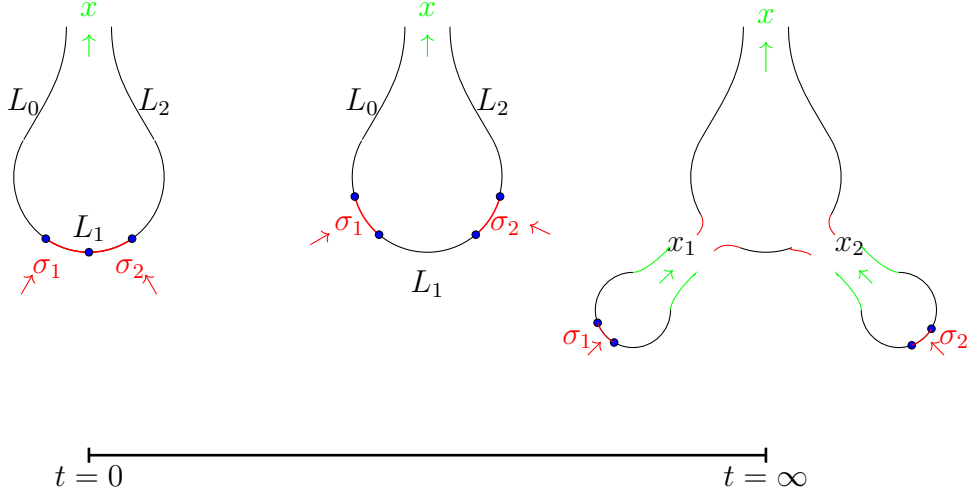
\begin{figure}\label{fig:2-product homotopy}
	\centering
	\begin{tikzpicture}
	\newcommand*{\radius}{1}
	\newcommand*{\tinyradius}{.05}
	\newcommand*{\bigradius}{1.5}
	\newcommand*{\smalldash}{.1}
	\newcommand*{\length}{3*\bigradius}
	\draw [ shift=(30:\radius)] (0,0) arc (29:-14:\radius);
	\draw[red,line width=0.6,shift=(-16:\radius)] (0,0) arc (-16:-49:\radius);
	\draw[shift=(-51:\radius)] (0,0) arc(-51:-129:\radius);
	\draw[red,line width=0.6,shift=(-131:\radius)] (0,0) arc (-131:-164:\radius);
	\draw[shift=(-166:\radius)] (0,0) arc(-166:-209:\radius);
	\draw (30:\radius) to [out= 120, in=-90] (.3*\radius, 2*\radius);
	\draw (-210:\radius) to [out=60, in =-90](-.3, 2);
	\coordinate[label=above:\textcolor{green}{$x $}] () at (0, 2);
	\coordinate[label=right:$L_2$]() at (.5, 1);
	\coordinate[label=left:$L_0$]() at (-.5,1);
	\coordinate[label=below:$L_1$]() at (0, -1.1);
	\coordinate[label=below: \textcolor{red}{$\sigma_1$}]() at (-160:1.1*\radius);
	\coordinate[label=below:\textcolor{red}{$\sigma_2$}]() at (-20:1.1*\radius);
	\draw[green, ->](0,1.6*\radius)--+(0,0.3 *\radius);
	\draw[red,<-] (-25:1.5*\radius)--+(-25:0.3cm);
	\draw[red,<-] (-150:1.5*\radius) --+(-150:0.3 cm);
	\draw [fill=blue](-15:\radius) circle (\tinyradius);
	\draw[fill=blue](-50:\radius) circle (\tinyradius);
	\draw[fill=blue](-165:\radius) circle(\tinyradius);
	\draw[fill=blue](-130:\radius) circle (\tinyradius);

	\begin{scope}[shift={(-\length, 0)}]
	\draw [shift=(29:\radius)](0,0) arc (29:-209:\radius);
	\draw (30:\radius) to [out=120, in=-90] (.3* \radius, 2*\radius);
	\draw (-210:\radius) to [out=60, in =-90](-.3*\radius, 2*\radius);
	\draw [red,line width=0.6, shift=(-55:\radius)] (0,0) arc (-55:-125:\radius);
	\draw [fill=blue](-55: \radius) circle (\tinyradius);
	\draw[fill=blue](-125:\radius) circle (\tinyradius);
	\draw[fill=blue](-90:\radius) circle (\tinyradius);
	\coordinate[label=above:\textcolor{green}{$x $}] () at (0, 2);
	\coordinate[label=right:$L_2$]() at (.5, 1);
	\coordinate[label=left:$L_0$]() at (-.5,1);
	\coordinate[label=above:$L_1$] () at (-90:\radius);
	\coordinate[label=below: \textcolor{red}{$\sigma_1$}]() at (-120:1.1*\radius);
	\coordinate[label=below:\textcolor{red}{$\sigma_2$}]() at (-60:1.1*\radius);
	\draw[green, ->](0,1.6*\radius)--+(0,0.3 *\radius);
	\draw[red,<-] (-60:1.5*\radius)--+(-60:0.3cm);
	\draw[red,<-] (-120:1.5*\radius) --+(-120:0.3 cm);
	\end{scope}
	
	\begin{scope}[shift={(\length,0)}]
	\newcommand*{\smallradius}{0.5*\radius}
	\draw [shift=(-30:\radius)](0,0) arc (-30:30:\radius);
	\draw [shift=(150:\radius)](0,0) arc (150:210:\radius);
	\draw [shift=(-70:\radius)](0,0) arc (-70:-110:\radius);
	\draw (30:\radius) to [out=120, in=-90] (.3* \radius, 2*\radius);
	\draw (-210:\radius) to [out=60, in =-90](-.3*\radius, 2*\radius);
	\coordinate (p1) at (220:1.2*\radius);
	\coordinate (p2) at (235:1.2*\radius);
	\coordinate (q1) at (-40:1.2*\radius);
	\coordinate (q2) at (-60: 1.2*\radius);
	\draw [red](210:\radius) to [out=-60, in =45](p1);
	\draw [red](250:\radius) to [out=160, in =45](p2);
	\draw [red](-30:\radius) to [out= -120, in = 135](q1);
	\draw [red](-70:\radius) to [out=-160, in =135](q2);
	\coordinate [red, label= left:$x_1$]() at (230:1.2*\radius);
	\coordinate [red, label= right:$x_2$]() at (-50:1.2*\radius);
	\coordinate [label=above:\textcolor{green}{$x$}]() at (0, 1.9*\radius);
	\draw [green, ->] (0,1.4*\radius) to (0,1.8*\radius);

	\begin{scope} [shift ={(-45:2.5*\radius)}]
	\draw [shift=(90:\smallradius)](0,0) arc (90:-180:\smallradius);
	\draw [fill=blue] (-30:\smallradius) circle (\tinyradius);
	\draw [fill=blue] (-70:\smallradius) circle (\tinyradius);
	\draw [red](-30:\smallradius) arc (-30:-70:\smallradius);
	\coordinate[label= right: \textcolor{red}{$\sigma_2$}] () at (-45:1.1*\smallradius);
	\draw[red, ->] (-45:1.7*\smallradius) --+(135:0.5*\smallradius);
	\draw [green] (90:\smallradius) [out=180, in=135]to (-0.9*\smallradius, 1.6*\smallradius);
	\draw[green](-180:\smallradius) [out=90, in =135] to (-1.6*\smallradius, 0.9*\smallradius);
	\draw [green, ->](135:\smallradius) to (135:1.5*\smallradius);
	\end{scope}
	
	\begin{scope}[shift={(-135:2.5*\radius)},rotate=-90]
	\draw [shift=(90:\smallradius)](0,0) arc (90:-180:\smallradius);
	\draw [fill=blue] (-30:\smallradius) circle (\tinyradius);
	\draw [fill=blue] (-70:\smallradius) circle (\tinyradius);
	\draw [red](-30:\smallradius) arc (-30:-70:\smallradius);
	\coordinate[red,label= left: \textcolor{red}{$\sigma_1$}] () at (-45:1.1*\smallradius);
	\draw[red, ->] (-45:1.7*\smallradius) --+(135:0.5*\smallradius);
	\draw [green] (90:\smallradius) [out=180, in=135]to (-0.9*\smallradius, 1.6*\smallradius);
	\draw[green](-180:\smallradius) [out=90, in =135] to (-1.6*\smallradius, 0.9*\smallradius);
	\draw [green, ->](135:\smallradius) to (135:1.5*\smallradius);
	\end{scope}

	\end{scope}

	\draw[line width=1]   (-\length,-\bigradius-2.2*\radius) -- (\length,-\bigradius-2.2*\radius);
	\begin{scope}[shift={(-\length,-\bigradius-2.2*\radius )}]
	\draw [line width=1] (0,-\smalldash)--(0,\smalldash);
	\coordinate [label=below:{$t=0$}] () at  (0,0);
	\end{scope}
	\begin{scope}[shift={(\length,-\bigradius-2.2*\radius)}]
	\draw [line width=1] (0,-\smalldash)--(0,\smalldash);
	\coordinate [label=below:{$t=\infty$}] () at  (0,0);
	\end{scope}
	\end{tikzpicture}
	\caption{boundary strata of one-punctured discs with 4 boundary marked points}	
\end{figure}

\subsection{Homotopy between compositions}\label{sec:homotopy_btwn_composition}
Having constructed the chain map $\F^1$, we would like to check it preserves product structures on the two chains, on the source it is given by the concatenation on the based loop space, namely the concatenation of paths; on the target it is the pair of pants product. As in the failure of pair of pants product to be associative, the preservation of product structure induced by $\F^1$ is only observed at the level of homology. At the level of chains, there is a homotopy between the $\F^1(\mu_2^P(\sigma_1,\sigma_2)))$ and $\mu^2_{\CW}(\F^1(\sigma_1), \F^1(\sigma_2) )$; these two compositions are represented by the two outermost diagram in Figure (\ref{fig:2-product homotopy}).
We shall therefore need to introduce a one-dimensional moduli space $\ZZ_{4}^{l_1,l_2}$ for each pair of positive $l_1, l_2$ representing the lengths of the path of moving Lagrangians in $M$, namely given $\sigma_1 \in C_{k_1}(\Omega_{\LL_0,\LL_1}\Lag), \tau_i \in I^{k_1} $, then $l_1 = |\sigma_1(\tau_1)|$ and similarly for $l_2$. The boundary of this moduli space is represented by the broken curve and the collapsing of two adjacent marked points as shown in Figure (\ref{fig:2-product homotopy}). As in the proof of the homotopy associativity of the product in the Fukaya category, we shall define a family of Cauchy-Riemann equations on this abstract moduli space, interpolating between the equations on the two boundary figures. The family of moduli spaces of solutions to this family of equations, with their corresponding boundary conditions, shall define the desired homotopy. For higher homotopies that is incorporated in $A_\infty$ functor, the constructions are similar and we describe them in one piece through Section (\ref{sec:abstract_moduli_space_of_upper_half_plane}) and Section (\ref{sec: proof_of_main_theorem}).

\subsection{Abstract moduli spaces of upper half planes}\label{sec:abstract_moduli_space_of_upper_half_plane}

Given a tuple of positive real numbers $l_1,\cdots, l_d$ write $\ZZ_{2d}^{l_1,l_2,\cdots, l_d}$ for the moduli space of upper half planes with $2d$ consecutive boundary marked points $z_1,\cdots, z_{2d}$, and that the distance $\tilde{l_i} =|z_{2i+1}-z_{2i}|$ satisfy $\frac{\tilde{l_i}}{\tilde{l_j}} =\frac{l_i}{l_j}$. This is bi-holomorphic to a disc with a puncture and $2d$ marked points on the boundary with fixed ratios. In addition, we fix an orientation on the moduli space $\ZZ_{2d}^{l_1,l_2,\cdots, l_d}$ using the conventions for Stasheff polyhedra \cite{seidel-book}.
In fact, we have the following lemma:
\begin{lem}
	The space $\ZZ_{2d}^{l_1,l_2,\cdots, l_d}$ is isomorphic to the moduli space $\RR_{d+1}$ of discs with $d+2$ boundary marked points.
	\begin{equation}\label{eq:isomorphism_between_ZZ_and_standard_Stasheff}
		\ZZ_{2d}^{l_1,l_2,\cdots, l_d}\iso \RR_{d+1} \subseteq \RR_{2d}.
	\end{equation} 
\end{lem}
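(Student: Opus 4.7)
The plan is to prove the isomorphism by constructing compatible charts on both sides that identify each moduli space with the open cone $(0,\infty)^{d-1}$. First I will verify the dimension count on each side. On $\ZZ_{2d}^{l_1,\ldots,l_d}$, the $2d$ ordered marked points supply $2d$ real parameters, from which I subtract $2$ for the action of the automorphism group $z\mapsto az+b$ fixing the puncture at $\infty$ and a further $d-1$ for the ratio conditions $\tilde l_i/\tilde l_1 = l_i/l_1$ with $i=2,\ldots,d$, yielding dimension $d-1$. On the $\RR_{d+1}$ side, the $d+2$ boundary marked points modulo the three-dimensional disc automorphism group likewise give dimension $(d+2)-3 = d-1$.

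Next I will set up the two charts. For $\ZZ_{2d}^{l_1,\ldots,l_d}$, I use the translation to fix $z_1=0$ and the scaling to fix the common scale $\lambda=1$, so that $\tilde l_i = l_i$ on the nose; the remaining freedom is then captured by the $d-1$ unconstrained positive gap lengths $g_i := z_{2i+1}-z_{2i}$ for $i=1,\ldots,d-1$, and the map $(z_1,\ldots,z_{2d})\mapsto (g_1,\ldots,g_{d-1})$ is a diffeomorphism onto $(0,\infty)^{d-1}$. For $\RR_{d+1}$, I realize each class as the upper half plane with one boundary marked point at $\infty$ and the remaining $d+1$ marked points $w_1<\cdots<w_{d+1}$ on the real axis, using $z\mapsto az+b$ to normalize $w_1=0$ and $w_2=1$; the remaining parameters are the $d-1$ positive consecutive differences $s_j := w_{j+2}-w_{j+1}$, giving another diffeomorphism onto $(0,\infty)^{d-1}$. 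Composing these charts produces the desired identification $\Phi\colon \ZZ_{2d}^{l_1,\ldots,l_d}\xrightarrow{\sim}\RR_{d+1}$, and the inclusion $\RR_{d+1}\subseteq \RR_{2d}$ asserted in the statement is realized as the tautological inclusion of $\ZZ_{2d}^{l_1,\ldots,l_d}$ into the ambient $\RR_{2d}$ obtained by forgetting the ratio constraints.

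The main obstacle is not the interior diffeomorphism, which is immediate from the two charts, but the compatibility of $\Phi$ with the Deligne--Mumford compactifications and with the orientation convention for Stasheff polyhedra, since this is what the later moduli-space arguments rely on. I plan to verify this by a stratum-by-stratum analysis: each codimension-one degeneration $g_i\to 0$ in $\ZZ_{2d}^{l_1,\ldots,l_d}$ (two pair-intervals colliding, so three consecutive moving-Lagrangian arcs pinching into a node) must correspond under $\Phi$ to the adjacent-marked-point collision $s_j\to 0$ in $\RRbar_{d+1}$, while each $g_i\to\infty$ limit (a subset of pairs splitting off to infinity) must correspond to the standard decomposition of $\RRbar_{d+1}$ as a product of lower-dimensional associahedra. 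Doing this inductively on $d$, one sees that both compactifications decompose along the same combinatorial tree structure indexing the associahedron, so that $\Phi$ extends to a homeomorphism of manifolds with corners with matching orientations, completing the proof.
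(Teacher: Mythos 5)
Your chart-based argument for the interior isomorphism is correct and is a reasonable alternative to the paper's proof: the paper instead constructs a direct forgetful map sending the odd-indexed points $z_1,z_3,\dots,z_{2d-1}$ together with $z_2$ (or any one even-indexed point) and the puncture at $\infty$ to the $d+2$ marked points of $\RR_{d+1}$, and observes that the ratio constraints determine the remaining even-indexed points from the chosen one, so this is a bijection. Your coordinate chart argument — both sides are diffeomorphic to $(0,\infty)^{d-1}$ via the normalized gap lengths — establishes the same thing and is, if anything, more explicit about the dimension count. Either proves the lemma as stated, since the lemma concerns only the open moduli spaces.

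However, the last paragraph of your proposal, which you describe as the ``main obstacle,'' is wrong. You claim that $\Phi$ extends to a homeomorphism of the Deligne--Mumford compactifications as manifolds with corners; the paper explicitly disclaims this immediately after the lemma: ``the compactification $\ZZbar_{2d}^{l_1,\cdots,l_d}$ of $\ZZ_{2d}^{l_1,\cdots,l_d}$ inside $\RRbar_{2d}$ is not quite the same as that of $\RRbar_{d+1}$.'' The reason is the ``simultaneous bubbling'' phenomenon described in item (\ref{case: exotic point collision}) of Section \ref{sec:abstract_moduli_space_of_upper_half_plane}: because the ratios $\tilde l_i/\tilde l_j$ are fixed, the moment any one of the marked gaps $z_{2k}-z_{2k-1}$ collapses, all of them must collapse at once. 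This produces codimension-one boundary strata of the form $\ZZ_{2d_1}\times\cdots\times\ZZ_{2d_r}\times\RR_r$ for any composition $d=d_1+\cdots+d_r$ with $r\geq 2$, which together with the $d-1$ strata from collapsing a single gap $z_{2k+1}-z_{2k}$ gives $(d-1)+(2^{d-1}-1)$ codimension-one faces of $\ZZbar_{2d}$. In $\RRbar_{d+1}$ the codimension-one strata number $\binom{d+1}{2}-1=\tfrac{d(d+1)}{2}-1$. These agree for $d=2,3$ but already differ for $d=4$ (ten versus nine), so the two compactifications cannot be isomorphic as manifolds with corners in general. Your stratum-by-stratum matching cannot succeed, and the later constructions in the paper deliberately work with $\ZZbar_{2d}$ rather than $\RRbar_{d+1}$ for precisely this reason.
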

\begin{proof}
We take the boundary points on the source indexed by odd numbers ($z_1,z_3.\cdots, z_{2d-1}$) to the first $d$ incoming marked points on $\RR_{d+1}$ and $z_2$ to the $d+1$-th incoming marked point (actually we may take any $z_{2k} \ \forall \ k \in \{1,\cdots, d\}$  instead of $z_2$ as the $(d+1)$th incoming marked point); the outgoing marked point for $\RR_{d+1}$ is mapped from the puncture at $\infty \in \bar{\HH}$. Note that the other even numbered marked points on the boundary of the upper half plane is fixed as long as we pick $z_2$ because the ratios between different $|z_{2k}-z_{2k-1}|$ are fixed. 
\end{proof}

Given the space of all configurations $\{(z_1,\cdots, z_{2d}) \}$ where $\{z_i\}$ lie on the boundary of $\HH$, to achieve stability, we need to take the quotient of the action of the automorphism of $\HH$ on the configurations $\{z_1,\cdots, z_{2d}\}$ which fixes the ratio of different lengths $\frac{l_i}{l_j}$. This automorphism group $Aut_{2d}^{\vec{l}}$ turns out to be $PSL(2; \R) \cap \text{Aff} (2;\R)$, thus we have the following equivalence relation on the space of configurations $\{(z_1,\cdots, z_{2d}) \}$ on $\del \HH$:
\begin{align} \label{eq: equivalence relation on upper half plane with markings}
 (z_1,\cdots, z_{2d}) \simeq (z'_1, \cdots, z'_{2d}) \iff z_i'=az_i+b \text{ for some $a \in \R_+, b \in \R$ and all $i$},
\end{align}

Therefore the dimension of the moduli space $\ZZ_{2d}^{l_1,\cdots, l_d}$ is:
\begin{equation}\label{eq:dimension_of_abstract_moduli}
2d-(d-1)-2=d-1.
\end{equation}
The first term $2d$ is the sum of $2d$ marked points on $\del {\HH}$; the term $(d-1)$ is due to the fact that as soon as we fix $z_2$, then the ratio $l_1:\cdots:l_d$ would fix the remaining $d-1$ marked points $z_4,\cdots, z_{2d}$; the last term $2$ is to mod out the automorphism group $Aut_{2d}^{\vec{l}}$.

Yet the compactification $\ZZbar_{2d}^{l_1,\cdots, l_d}$of $\ZZ_{2d}^{l_1,\cdots, l_d}$ inside $\RRbar_{2d}$ is not quite the same as that of $\RRbar_{d+1}$ as we shall see below.

From now on, if there is no confusion, we shall use $\ZZ_{2d}$ or $\ZZ^{\vec{l}}_{2d}$ to abbreviate for $\ZZ_{2d}^{l_1,\cdots, l_d}$.

There are two scenarios to be taken into account:

\begin{enumerate} \label{cond:scenarios of points collision in abstract moduli space of upper half plane}
\item  \label{case: normal point collision}
If two consecutive marked points $z_{2k}, z_{2k+1}$ come together, as in the picture on the left hand side of figure (\ref{fig:boundary_of_d_inputs}), the topological type is determined by sequences $z_1,\cdots,z_{2k}, z_{2k+1},\cdots, z_{d}$, thus we obtained a map:
\begin{equation}\label{eq:markings_collapse}
\ZZ_{2d-1}^{l_1,\cdots, l_{k}+l_{k+1}, \cdots, l_{d} }\to \ZZbar_{2d}^{l_1,\cdots,l_d},
\end{equation}
where we regard the original two markings $z_{2k}, z_{2k+1}$ as a single marked point $z_{2k}'$.

\item \label{case: exotic point collision}
 If two consecutive marked points indexed by $z_{2k-1}, z_{2k}$ come close together, then by the property of \textit{fixed ratio} of $z_{2i}-z_{2i-1}$ and $z_{2k}-z_{2k-1} \ \forall i, k$, all two consecutive markings with such configurations have to come close; thus there are multiple bubbles breaking off with corresponding gluing parameters such that the ratio is still fixed. This phenomenon corresponds to the right hand side of Figure (\ref{fig:boundary_of_d_inputs}). Note the topological type is determined by sequences $\{1,\cdots, d_1\}, \cdots, \{d_1+\cdots +d_{r-1}+1, \cdots, d\}$, a partition of $d$ into $r$ sets of consecutive integers. We get a map:
\begin{equation}\label{eq:bubbling_of_abstract_moduli}
\ZZ_{2d_1}\times \cdots \ZZ_{2d_r} \times \RR_{r} \to \ZZbar_{2d},
\end{equation}
\begin{rem}
	The dimension of the left hand side of \ref{eq:bubbling_of_abstract_moduli} is \[
	\sum_{i=1}^{r} (d_i-1) + (r+1-3) =(\sum d_i )-2 =d-2,
	\]
	which is $1$ less than that of $\ZZ_{2d}^{\vec{l}}$. Here the second term $(r+1-3)$ is the dimension of the moduli space of abstract discs $\RR_r$ with $r$ incoming marked points and a single outgoing marked point on the boundary.
\end{rem}

We shall denote the unique disc representing $\RR_r$ by level 1 disc, and call the other discs representing $\ZZ_{2d_i}$ level 2 discs.
\end{enumerate}

\subsection{Floer data for half planes}\label{sec:Floer_data_upper_half_plane}
To properly define the space of solutions of inhomogeneous Cauchy-Riemann equations on upper half planes, we need consistent Floer data.
Here we have the outgoing end at the puncture $\xi_0$ as before. 

\begin{defin}\label{def:Floer_data_on_upper_half_Disc}
	A \textit{Floer datum} $D_U$ on a stable upper half plane $U \in \ZZ_{2d}^{\vec{l}}$ consists of the following choices on each component:
	\begin{enumerate}
	\item Weights: a positive real number $w_{k,U}$, where $k \in \{0,\cdots, r \}$ associated to each end for the 2nd type of of boundary strata (\ref{eq:bubbling_of_abstract_moduli}) where the principal holomorphic disc is broken into a level 1 disc with $r$ incoming ends and a outgoing end and $r$ level 2 discs each equipped with a single outgoing ends, namely $\ZZ_{2s_1}\times \cdots \ZZ_{2s_r} \times \RR_{r} \subseteq \ZZbar_{2d+1}$ $w_i$ satisfy $1=w_0 \geq \sum_1^r w_{k,U}$. 
	
	For $U$ inside the open strata or first type of boundary strata where there is no breaking of holomorphic discs (\ref{eq:markings_collapse}), the weight is just $1$.
	\item time shifting maps: for the curve breaking boundary (\ref{eq:bubbling_of_abstract_moduli}),  map $\rho_U:\del \bar{U} \to [1,+\infty)$ which agrees with $w_{k,U}$ near $\xi^k$ where $\xi^k$ denote the infinite ends and equals $1$ for open strata and first type of boundary strata (\ref{eq:markings_collapse}).
	\item Hamiltonian perturbation: a map $H_U: U \to \sH(M)$  on each surface such that the restriction of $H_U$  to a neighborhood of each segment between $z_{2i+1}$ and $z_{2i+2}$ for $1\leq i\leq d$ takes value in $\sH(M)$, satisfying Condition (\ref{eq:quadratic Hamiltonian}), and whose value near $\xi^k$ for $0\leq k\leq r$  is : 
	\begin{equation}
	\frac{H\circ \psi^{w_{k,U}}}{w^2_{k,U}}
	\end{equation}
	for the boundary strata represented by (\ref{eq:bubbling_of_abstract_moduli}) and equals $H$ near $\xi^0$ for open strata and first type boundary strata (\ref{eq:markings_collapse}).
	\item Basic one-form: a  sub-closed one-form $\gamma_U$ whose restriction to the complement of a neighborhood of the intervals between $z_{2i-1}$ and $z_{2i} (1\leq i \leq d)$ vanishes; and whose pullback under $\epsilon^k$ for $0\leq k \leq r$ agrees with $w_{k,U} dt$.
	\item Almost complex structure: a family $J_{ \tau, z} \in \JJ(M)$ of almost complex structures parametrized by $U$ and $\tau \in I^k$ for any positive integer $k$,  whose pullback under $\epsilon_k$ agrees with $\psi^{w_{k,U}}d\tau$.
	\end{enumerate}
\end{defin}

As before we write $X_U$ for the Hamiltonian flow of $H_U$ and consider the differential equation (\ref{eq:CR_for_upper_half_plane}):
\[(du-X_U\otimes \gamma_U)^{0,1}=0
\]
with respect to $J_{\tau, z}$.
Note that the pullback of equation (\ref{eq:CR_for_upper_half_plane}) under the ends $\epsilon^k$ agrees with 
\begin{equation}\label{eq:CR_at_infinite_ends}
	(du\circ\xi^k - X_{\frac{H\circ \psi^{w_{k,U}}} {w_{k,U} }}\otimes dt)^{0,1}=0.
\end{equation}
In order for the count of solutions to equation (\ref{eq:CR_for_upper_half_plane}) to define the desired homotopies, we must choose its restriction to the boundary strata of the moduli spaces in a compatible way. In the case where $d=2$, the moduli space is an interval, whose two endpoints may be identified with the product $\ZZ_2\times \ZZ_2\times \RR_{2}$ and $\ZZ_3$ (see figure \ref{fig:2-product homotopy}).

Our discussion in sections (\ref{sec:Wrapped}) and (\ref{sec:chain_map}) fixed the Floer data, on the unique elements of $\RR_2$ and $\ZZ_2$. In the case of $\ZZ_2\times \ZZ_2\times \RR_2$ the equations on the out-coming ends of the level 2 discs and the incoming ends of the level 1 discs agree up to some rescaling. Thus we obtain Floer data in a neighborhood of $\del \ZZbar_4$, which may be extended to the interior of the moduli space. Having fixed this choice, we proceed inductively for the rest of the moduli spaces:

\begin{defin}\label{def:universal_floer_data_for_higher_homotopy}
	A universal and conformally consistent choice of Floer data for the homomorphism $\F$, is a choice $\bfD_{\F}$ of such Floer data for every integer $d\geq 1$ and every element of $\ZZ_{2d}$ which varies smoothly over this compactified moduli space, whose restriction to a boundary stratum is conformally equivalent to the product of Floer data coming from either $\bfD_{\mu}$ or lower dimensional moduli spaces $\ZZbar_{d_i}$ and which near such a boundary agrees to infinite order with the Floer data obtained by gluing.
	
\end{defin}
The consistency condition implies that each irreducible component of a curve representing a point in the stratum (\ref{eq:bubbling_of_abstract_moduli}) carries the restriction of the data $\bfD_{\mu}$ if it comes from the factor $\RRbar_{r}$, and the restriction of the datum $\bfD_{\F}$ if it comes from $\ZZ_{d_i}$, up to conformal equivalence.

\subsection{Moduli space of upper half planes }\label{sec:moduli_of_upper_half_plane}
In order to extend $\F^1$ to an $A_\infty$ functor $\{\F^d\}$, we need to construct corresponding moduli spaces. 
	First of all, let us briefly review the moduli space constructed for $\F^1$:
	Consider the upper half plane $\HH$ with $2$ marked points $z_1,z_2$ on the boundary $\R$, with length $l=z_2-z_1$. 
	\begin{rem}
	The domain is bi-holomorphic to a unit disc with a puncture at $(0,i) \in \C$ and $2$ marked points on the boundary where $\infty \in \bar{\HH}$ is identified with the puncture $(0, i)$. 
	\end{rem}

	For $\sigma \in C_{k}(\Omega_{\LL_0, \LL_1}(\Lag))$ and $x \in \Chord(\LL_0, \LL_1)$, we associated in Definition (\ref{def:moduli_of_Disc_3}) a moduli space  $\Disc_{3}(\sigma;x)$ of upper half planes with boundary marked points with moving Lagrangian boundary conditions given by $\sigma$.  It can be viewed as a family of spaces over $I^{k}$ such that $\forall \tau \in I^{k}$, the member of the family over $\tau$ is $\Disc_3 (\sigma(\tau);x)$, the space of holomorphic maps from the upper half space with two boundary markings $z_1 = (0,0), z_2 = (1,0)$ such that the path of Lagrangian submanifolds between them is dictated by $\sigma(\tau)$.
	
	Note in the definition of $\Disc(\sigma, x)$ and $\Disc(\sigma(\tau), x)$, we already fixed the action of automorphism of $\HH$ with two boundary markings. We can study the pair $(u, \tau)$ satisfying Condition (\ref{eqn: conditions_for_Disc_3}), with the small modification that instead of the strict requirement that $z_1 = (0,0), z_2 = (1,0)$ and the equation denoting the moving boundary to be 
	\begin{equation}
	u(z) \in L_{\sigma(\tau)(\frac{t}{|\sigma(\tau)|})} \text{ if $z=(t,0) \in \del U$ lies between $z_1$ and $z_2$ },
	\end{equation}
	we just let $z_1, z_2$ be two boundary markings and the component of the boundary of $\HH$ satisfies
	\begin{equation}
	u(z) \in L_{\sigma(\tau)(t)} \text{ if $z=(t,0) \in \del U$ lies between $z_{1}$ and $z_2$}
	\end{equation}
	This gives us a new space which we denote $\Disct_{3}(\sigma, x)$, whose quotient by the automorphism group $\text{PSL}(2;\R) \cap \text{Aff}(2, \R)$ on the domain is $\Disc_{3}(\sigma, x)$. Similarly we have a space $\Disct_{3}(\sigma(\tau), x)$ for any $\tau$.
	
  The above construction gives rise to following projection:
	\begin{equation}\label{eq:fibration_for_Disct_3}
	\begin{tikzcd}
%
		&\Disct_3(\sigma(\tau);x) \ar{d} \ar{r} &\Disct_{3}(\sigma;x) \ar{d}{\pi}\\
		& \{\tau\} \ar{r}[draw = none]{\in} &I^{k}.
 	\end{tikzcd}
	\end{equation}
  The automorphism group acts on $\Disct_3(\sigma(\tau);x)$ and $\Disct_{3}(\sigma;x)$ simultaneously and gives the following projection:
  \begin{equation}\label{eq:fibration_for_Disc_3}
  	\begin{tikzcd}
  	&\Disc_3(\sigma(\tau);x) \ar{r} \ar{d}&\Disc_{3}(\sigma;x) \ar{d}{\pi}\\
  	&\{\tau \} \ar{r}{\in}&I^{k}.
  	\end{tikzcd}
  \end{equation}
  
	We now set out to generalize the constructions to multiple markings in order to give the higher operations $\F^d$ of the $A_\infty$ functor.
	For any integer $d \geq 1$, we consider the upper half plane $\HH$ with $2d$ boundary markings, which is bi-holomorphic to the unit disc with 1 puncture on the boundary $\xi^0$ and $2d$ markings, such that lengths $l_i=z_{2i}-z_{2i-1}$ denote the length of the moving Lagrangian boundary condition. Let 
	\[{t_i}=z_{2i+1}-z_{2i}\]
	 represent the lengths of the boundary component of $\HH$ that lies in $L_i$, namely constant boundary condition. 
	 As in section (\ref{sec:chain_map}), we work with a Hamiltonian $H_U$ that is quadratic with respect to the radial coordinate near cylindrical ends as in condition (\ref{def:Floer_data_on_upper_half_Disc}); also similar to the discussion of families of almost complex structures, we consider the space $\JJ_{U, \vec{k} }$ of almost complex structures parametrized by the product of $I^{k_i}$ with domain $U$ such that for any $ \vec{\tau} \in (I^{k_1}, \cdots, I^{k_d}) $, $J_{\vec{\tau}, z} \in \JJ(M)$ is independent of $ \vec{\tau}$ when we go far enough in the strip-like ends near the puncture $\xi$ as in Equation (\ref{eq:almost_complex_structure_at_strip_like_ends}). 
	 Denote such a family of almost complex structure by $J_{\vec{\tau} }$ if there is no confusion.

	 Given $\sigma_i \in C_{k_i}(\Omega_{L_i, L_{i + 1}} \Lag)$ and $\tau_i \in I^{k_i}$, let $|\sigma_i(\tau_i)|$ denote the length of this path; fixing $x \in \Chord(\LL_i, \LL_{i+1})$, we define the space of pseudo-holomorphic maps with multiple moving Lagrangian boundary condition as follows:
	 
	 \begin{defin}\label{defin: moduli_space_of_curves_with_multiple_boundary_points}
	 	The space of perturbed pseudo-holomorphic maps with multiple moving Lagrangian boundary $\Disct(\sigma_1,\cdots, \sigma_d, x, H, J_{\vec{\tau}})$ is the space of pairs $(u, \vec{\tau})$ consisting of a map $u: U\to M$ and $\tau_i \in I^{k_i}$ satisfying
	 		\begin{equation}\label{eq:conditions_multiple_moving_Lagrangian_boundary}
	 	\left \{
	 	\begin{aligned}
	 	& (du-X_{H_U}\otimes dt)\circ j_U - J_{\vec{\tau}, z} (du - X_{H_U}\otimes dt)=0, \\
	 	& u(z) \in  \psi^{\rho(z)}(L_0) \text{ if $z \in \del U$ lies on the segment between $\xi^0$ and $z_1$ },\\
	 	& u(z) \in  \psi^{\rho(z)}(L_d) \text{ if $z \in \del U$ lies on the segment between $\xi^0 $ and $z_{2d}$},\\
	 	& u(z) \in L_{\sigma_i(\tau_i)(t)} \text{ if $z=(t,0) \in \del U$ lies between $z_{2i-1}$ and $z_{2i}$,}\\
	 	&\lim_{s \rightarrow + \infty} u(\epsilon^0(s,t )) = x(t) \text{(asymptotic condition).}\\
	 	\end{aligned}
	 	\right \}
	 	\end{equation}
	 \end{defin}
We shall write $\Disct_{2d+1}(\vec{\sigma}; x)$ to abbreviate it if there is no confusion.
 	
Fixing $\tau_1, \cdots, \tau_d$, we define $\Disct_{2d+1}(\sigma_1(\tau_1),\cdots, \sigma_d(\tau_d);x)$ to be the space of solutions to Equation (\ref{eq:conditions_multiple_moving_Lagrangian_boundary}) 
	then  $ \forall \  \tau_i \in I^{k_i}$ we get the following projection:
	\[
	\begin{tikzcd}
	&\Disct_{2d+1}(\sigma_1(\tau_1),\cdots, \sigma_d(\tau_d);x) \ar[r]\ar[d] & \Disct_{2d+1} (\sigma_1,\cdots, \sigma_d; x)\ar{d}{\pi}\\
	& \{\tau_1,\cdots, \tau_d\} \ar[r] &I^{k_1+\cdots+k_d}.
	\end{tikzcd}
	\]
	
	Note that the domain of the maps in any given fiber $\Disct_{2d+1}(\sigma_1(\tau_1),\cdots, \sigma_d(\tau_d) ;x)$ is $\HH$ with $2d$ markings $z_1, \cdots, z_{2d}$ such that the lengths $l_i$ between $z_{2i -1}$ and $z_{2i}$ are equal to $|\sigma(\tau_i)|$.
	Let $\vec{\sigma}(\tau)$ denote $\sigma_1(\tau_1),\cdots, \sigma_d(\tau_d)$. We mod out the automorphism group $Aut^{\vec{l}}_{2d}$ on the space of maps $\Disct_{2d+1}(\vec{\sigma}(\tau); x)$ with moving Lagrangian boundary conditions $\sigma_i(\tau_i)$ between $z_{2i-1}$ and $z_{2i}$ to achieve stability of the domain. Thus we get the moduli space of pseudo-holomorphic curves with moving Lagrangian conditions given by multiple paths $\sigma_i(\tau_i) \in \Omega_{\LL_{i-1}, \LL_i } \Lag$ on an element $\ZZ^{\vec{l}}_{2d + 1}$.
	
	Similarly, the space $\Disct_{2d+1} (\sigma_1,\cdots, \sigma_d; x)$ also allows the action of $Aut^{\vec{l}}_{2d}$ and the \textit{moduli space of pseudo-holomorphic curves with moving Lagrangian boundary conditions} $\Disc_{2d+1} (\sigma_1,\cdots, \sigma_d; x)$ is defined as the quotient of $Aut^{\vec{l}}_{2d}$ action. As in the discussion after Definition (\ref{def:moduli_of_Disc_3}), $\Disc_{2d+1}(\vec{\sigma};x)$ is a smooth manifold of dimension $\sum |\sigma| + \mu(x) + d -1 $ for generic choice of Floer data and its Gromov compactification is a stratified topological manifold with corners.
	
	There is the diagram similar to Equation (\ref{eq:fibration_for_Disc_3}):
	\[ \begin{tikzcd} \label{eq:fibration of disc with multiple markings}
		&\Disc_{2d+1}(\vec{\sigma}(\tau);x) \ar{r} \ar[d]& \Disc_{2d+1}(\vec{\sigma};x) \ar{d} \\
			&	\vec{\tau} \ar[r]	&I^{\sum k_i}.
	\end{tikzcd}
	\]

	In order to show that $\{\F^{d} \}_{d = 1}^{\infty}$ is a map of $A_\infty$ categories, we need to study the relevant strata of the Gromov compactification of $\Disc_{2d+1}(\vec{\sigma};x)$.
	

	First of all, we study the boundary strata of the compactification of the fibers $\Disc_{2d+1}(\vec{\sigma}(\tau); x)$ of the fibration (\ref{eq:fibration of disc with multiple markings}). From the discussion in Section (\ref{sec:abstract_moduli_space_of_upper_half_plane}) and the standard gluing arguments similar to that for Proposition (\ref{prop:boundary strata of Discbar_3}) we have the following lemma:
	\begin{lem}
	The boundary strata of $\Discbar_{2d+1}(\vec{\sigma}(\tau); x)$ consist of strata from the following scenarios:
	\begin{enumerate}
		\item  (The strata corresponding to Case (\ref{case: normal point collision})  of the boundary strata of $\ZZbar_{2d+1}$.)
		Let $\sigma_{\mathfrak{i}}(\tau_{i, i+1})$ denote the concatenation of two adjacent paths $\sigma_i(\tau_i)$ and $\sigma_{i+1}(\tau_{i+1})$ in $ \Lag$; in other words, $z_{2i}$ and $z_{2i+1}$ come together. This gives a moduli space 
		\begin{equation}\label{eqn:normal boundary strata of fiber}
		\Discbar_{2d}(\sigma_1(\tau_1), \cdots, \sigma_{\mathfrak{i}}(\tau_{i, i+1}),\cdots, \sigma_d(\tau_d); x)
		\end{equation}
		 of pseudo holomorphic maps from upper half plane with $2d-1$ boundary markings. 
		
		\item (The strata corresponding to Case (\ref{case: exotic point collision}) of $\del \ZZbar_{2d+1}$.)
		When $z_{2i -1}$ and $z_{2i}$ come close together, the upper half plane admits simultaneous bubble breaking with configuration given by $\{1,\cdots, s_1\}, \cdots, \{s_1+\cdots +s_{r-1}+1, \cdots, d\}$, a partition of $d$ into $r$ sets of consecutive integers. This gives rise to a boundary strata represented by 
		 \begin{align} \label{eqn: exotic boundary strata of fiber}
		\Disc_{d_1,\cdots, d_r}({\vec{\sigma}(\tau)}; x_1,\cdots, x_r, x):= &\Disc_{2d_1+1} ({\sigma_1(\tau_1),\cdots, \sigma_{d_1}(\tau_{d_1})}; x_1) \times \cdots\times \\ & \Disc_{2d_r+1}({\sigma_{\sum_{i=1}^{r-1}+1}(\tau_{\sum_{i=1}^{r-1}+1}) ,\cdots, \sigma_{d}(\tau_d); x_r })\times \Disc( x_1,\cdots, x_r; x).
		\end{align}
	\end{enumerate}
	\qed
		\end{lem}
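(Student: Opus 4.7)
The plan is to combine Gromov compactness for solutions to the perturbed Cauchy-Riemann equation (\ref{eq:CR_for_upper_half_plane}) with the two-case analysis of the compactified abstract domain moduli space $\ZZbar_{2d+1}$ carried out in Section (\ref{sec:abstract_moduli_space_of_upper_half_plane}). Before identifying boundary strata I would rule out the usual obstacles to compactness: exactness of $\theta$ on $M$ forbids sphere bubbling, the exactness hypothesis (\ref{eq:exactness}) on each $L\in\Lag$ forbids disc bubbling, and the quadratic Hamiltonian profile (\ref{eq:quadratic Hamiltonian}) together with the contact-type condition on $J_{\vec{\tau},z}$ gives the $C^0$ estimate keeping all solutions in a compact subset of $M$ (as summarized in the appendix). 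With $\vec{\tau}$ held fixed, the continuously chosen primitives $f_L$ yield a uniform topological energy bound, so the only possible Gromov limits come from degenerations within $\ZZbar_{2d+1}$ together with the familiar Floer strip-breaking at the outgoing puncture $\xi^0$; the two non-trivial domain-degeneration scenarios enumerated in the lemma are what remain to be matched with geometric boundary strata.

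For Case 1, the collision $z_{2i}\to z_{2i+1}$ shrinks a constant boundary segment of length $t_i\to 0$ lying on $L_{i+1}$. This does not force any conformal rescaling because the fixed-ratio constraint $\tilde l_j/\tilde l_k = l_j/l_k$ only involves the lengths of the \emph{moving} intervals, so the collapse is stable in $\ZZbar_{2d+1}$ and the limit lies in a single copy of $\ZZ_{2d-1}^{l_1,\ldots,l_i+l_{i+1},\ldots,l_d}$. The limit boundary condition along the new single moving arc from $z_{2i-1}$ to $z_{2i+2}$ is precisely the concatenation $\sigma_{\mathfrak{i}}(\tau_{i,i+1})$ of the two adjacent paths meeting at $L_{i+1}$, and gluing in the reverse direction is straightforward because there is no bubble to reglue and no change of Floer data; one obtains (\ref{eqn:normal boundary strata of fiber}).

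For Case 2, when one moving interval length $l_i\to 0$ the fixed-ratio constraint forces \emph{every} moving interval to shrink simultaneously at its corresponding rate. Restoring stability by applying the $Aut_{2d}^{\vec{l}}$-action to each local cluster of colliding markings produces a nodal configuration indexed by a partition of $\{1,\ldots,d\}$ into $r$ consecutive blocks of sizes $d_1,\ldots,d_r$: a level-1 disc with $r$ incoming punctures carrying constant boundary conditions $L_0, L_{d_1}, L_{d_1+d_2},\ldots, L_d$ and, at each puncture, a level-2 bubble absorbing the corresponding consecutive block of moving boundaries $\sigma_\bullet(\tau_\bullet)$. At the $j$-th new node the asymptotic data is a Hamiltonian chord $x_j\in\Chord(L_{d_1+\cdots+d_{j-1}},L_{d_1+\cdots+d_j})$. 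The universal and conformally consistent Floer data of Definition (\ref{def:universal_floer_data_for_higher_homotopy}), specifically the rescaling $H\circ\psi^{w_{j,U}}/w_{j,U}^2$ of the Hamiltonians at each end prescribed in Definition (\ref{def:Floer_data_on_upper_half_Disc}), ensures that the equations on the level-1 and level-2 pieces match under gluing, producing the product stratum (\ref{eqn: exotic boundary strata of fiber}).

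The main obstacle will be Case 2, since the fixed-ratio constraint is not standard from the viewpoint of Stasheff polytopes. Two points require care: first, verifying the dimension count $\sum_{j=1}^r (d_j - 1) + (r - 2) = d - 2$, one less than $\dim\ZZ_{2d}^{\vec{l}} = d-1$, so that each product stratum is indeed codimension one; and second, setting up a gluing argument that \emph{simultaneously} glues all $r$ nodes with matched conformal parameters so as to preserve the fixed-ratio condition, rather than gluing them independently as in the usual Stasheff polytope construction. Once the rescaled Floer data is checked to satisfy the consistency condition across each boundary stratum, a standard analytic gluing argument in the spirit of \cite{AbSe09} together with the gluing appendix constructs a half-neighborhood embedding of each stratum into $\Discbar_{2d+1}(\vec{\sigma}(\tau); x)$; combined with Gromov compactness this exhausts the codimension-one boundary and completes the proof.
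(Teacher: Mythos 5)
Your proposal is correct and takes essentially the same route the paper intends: the paper itself gives no written proof for this lemma beyond a pointer back to the analysis of $\ZZbar_{2d}$ in Section \ref{sec:abstract_moduli_space_of_upper_half_plane} and "standard gluing arguments similar to Proposition \ref{prop:boundary strata of Discbar_3}," which is precisely the combination of exactness/$C^0$/energy preliminaries, matching Case 1 (collision of a constant segment, no bubble since any bubble would be a ghost), and matching Case 2 (simultaneous shrinking of all moving segments forced by the fixed-ratio constraint) that you spell out. Your observations about the codimension count and about needing a single coordinated gluing parameter across all $r$ nodes are exactly the points that make the fixed-ratio compactification differ from the usual Stasheff picture, and you are also right that the $r=1$ entry of Case 2 is where the familiar Floer strip-breaking at $\xi^0$ is absorbed into the statement.
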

  Standard arguments from gluing similar to the argument for Proposition (\ref{prop:boundary strata of Discbar_3}) give the strata of the Gromov compactification $\Discbar_{2d+1}(\vec{\sigma};x)$ of the total space.
  \begin{lem}
  The Gromov compactification $\Discbar_{2d+1}(\vec{\sigma};x)$ is a stratified topological manifold with corners that consists of the following pieces:

\begin{enumerate}
	\item The interior $\Disc_{2d+1}(\vec{\sigma};x)$.
	\item The union of the closures of strata representing maps of upper half plane with $2d-1$ boundary marked points corresponding to Case (\ref{case: normal point collision}). For each member $ \{\tau_1,\cdots, \tau_d \}$ of the family parametrized by $I^{\sum d_i }$, the intersections of such strata with $\Discbar_{2d +1}(\vsigma(\tau); x)$ are the unions of boundary strata (\ref{eqn:normal boundary strata of fiber}) where $i$ ranges from $1$ to $d$. We write $\Discbar_{2d}(\vsigma_{\mathfrak{i}}; x)$ to denote such a stratum whose intersections with $\{\Discbar_{2d +1}(\vsigma(\tau); x) \}_{\vtau \in I^{\sum k_i}}$ is the stratum represented by Equation \eqref{eqn:normal boundary strata of fiber}.
	
	\item The strata
	\begin{align}\label{eqn:exotic boundary of moduli space}
		\Disc_{d_1,\cdots, d_r}({\vec{\sigma}}; x_1,\cdots, x_r, x):= &\Disc_{2d_1+1} ({\sigma_1,\cdots, \sigma_{d_1} }; x_1) \times \cdots\times \\ & \Disc_{2d_r+1}({\sigma_{\sum_{i=1}^{r-1}+1} ,\cdots, \sigma_{d} }; x_r)\times \Disc(x; x_1,\cdots, x_r).
	\end{align}
	whose intersection with $\{\Discbar_{2d +1}(\vsigma(\tau); x)\}_{\vtau \in I^{\sum k_i}}$ are the boundary strata (\ref{eqn: exotic boundary strata of fiber}).
	This corresponds to the multiple bubbling in Equation (\ref{eq:bubbling_of_abstract_moduli}) and left side of Figure (\ref{fig:boundary_of_d_inputs}).
	
	\item $\Discbar_{2d+ 1}(\sigma_1, \cdots, \sigma', \cdots, \sigma_d; x)$ where $\sigma' \subset \del \sigma_i$, this stratum is from the boundary of $i$-th cube in the cubical chain.
\end{enumerate}\qed
 \end{lem}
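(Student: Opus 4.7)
The plan is to identify the codimension one boundary strata of the Gromov compactification $\Discbar_{2d+1}(\vsigma; x)$ by combining three ingredients: (i) the classification of boundary strata of the abstract moduli space $\ZZbar_{2d}^{\vec l}$ from Section \ref{sec:abstract_moduli_space_of_upper_half_plane}, (ii) the fibration structure of $\Disc_{2d+1}(\vsigma; x)$ over the cube $I^{\sum k_i}$, and (iii) the standard Gromov compactness and gluing machinery recorded in the Appendix.

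First I would study the projection $\pi: \Discbar_{2d+1}(\vsigma; x) \to I^{\sum k_i}$ whose fiber over $\vtau$ is $\Discbar_{2d+1}(\vsigma(\vtau); x)$. The topological boundary of the total space splits as the preimage of $\del I^{\sum k_i}$, which restricts some $\sigma_i$ to a face $\sigma' \subseteq \del\sigma_i$ and produces item (4), together with the fiberwise boundary assembled across the interior of the cube. The contribution from the cube boundary is essentially tautological from the cubical chain structure of $C_*(\Omega\tLag)$, so the real content lies in describing the fiberwise boundary.

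Next I would fix $\vtau$ and analyze a sequence $(u_n, \vtau)$ in $\Disc_{2d+1}(\vsigma(\vtau); x)$. The analytic input is the $C^0$ estimate from the Appendix, which uses the contact type condition \eqref{eq:contact type} together with the quadratic growth \eqref{eq:quadratic Hamiltonian} to confine the images to a compact region; combined with exactness of the Lagrangians, this rules out disc and sphere bubbling. Therefore any subsequential Gromov limit is a nodal configuration whose topological type is dictated entirely by the degeneration of the underlying domain $U_n \in \ZZ_{2d}^{\vec l(\vtau)}$ and by strip breaking at the puncture $\xi^0$. The domain degenerations fall into the two cases from Section \ref{sec:abstract_moduli_space_of_upper_half_plane}: collision of adjacent markings $z_{2i}, z_{2i+1}$ concatenates the corresponding paths and produces the fiberwise stratum \eqref{eqn:normal boundary strata of fiber}, which assembles across $\vtau$ into item (2); while simultaneous collision of all pairs $z_{2i-1}, z_{2i}$ governed by a partition of $d$ into $r$ consecutive blocks, combined with strip breaking at $\xi^0$, yields the product stratum \eqref{eqn: exotic boundary strata of fiber}, giving item (3).

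Finally I would invoke the standard Floer-theoretic gluing theorem to show that every solution in a stratum from items (2)--(4) admits a neighborhood in $\Discbar_{2d+1}(\vsigma; x)$ homeomorphic to a half space of the correct dimension; combined with transversality guaranteed by the universal and consistent choice of Floer data in Definition \ref{def:universal_floer_data_for_higher_homotopy}, this yields the stratified topological manifold with corners structure. The main obstacle I anticipate is verifying completeness of the enumeration: a priori the Liouville rescalings and time shifting maps built into the Floer data could produce additional interior boundary phenomena not visible at the level of the abstract domain $\ZZbar_{2d}^{\vec l}$. The consistency condition in Definition \ref{def:universal_floer_data_for_higher_homotopy} handles this by ensuring that such reparametrizations glue smoothly across strata and that the pulled-back equation on each irreducible component near a degeneration is precisely of the form already treated in earlier sections, so no extra boundary appears beyond items (1)--(4).
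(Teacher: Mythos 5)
Your proposal is correct and follows essentially the same route the paper takes: the paper states this lemma with only a brief pointer to ``standard arguments from gluing similar to the argument for Proposition (\ref{prop:boundary strata of Discbar_3})'' and to the immediately preceding lemma classifying the fiberwise boundary strata of $\Discbar_{2d+1}(\vsigma(\tau);x)$, and you have simply spelled out that standard argument — splitting the boundary into the preimage of $\del I^{\sum k_i}$ (item 4) and the fiberwise boundary (items 2--3), controlling the latter via $C^0$ bounds, exactness to exclude bubbling, the two domain degenerations of $\ZZbar_{2d}^{\vec l}$ from Section (\ref{sec:abstract_moduli_space_of_upper_half_plane}), and gluing for the chart structure.

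One small point worth being careful about if you write this up: your phrase ``simultaneous collision of all pairs $z_{2i-1}, z_{2i}$ \ldots combined with strip breaking at $\xi^0$'' is slightly imprecise. The stratum \eqref{eqn:exotic boundary of moduli space} covers all $r\geq 1$, where the last factor is $\Disc(x;x_1,\cdots,x_r)$. For $r\geq 2$ this is the genuine multiple-bubbling degeneration of the abstract moduli $\ZZ_{2d_1}\times\cdots\times\ZZ_{2d_r}\times\RR_r\to\ZZbar_{2d}$; for $r=1$ the factor degenerates to the Floer strip $\Disc(x;x_1)$, which is ordinary strip breaking at the output puncture. These are really two different mechanisms (domain degeneration in $\ZZbar_{2d}$ versus Floer energy escaping to $\xi^0$) that happen to be packaged together in \eqref{eqn:exotic boundary of moduli space}, so it is cleaner to state them separately rather than as a combined event.
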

\subsection{Compatible choices of fundamental chains} In order to define operations using the moduli space $\Discbar_{2d +1}(\vsigma; x)$ we must compare the orientation of the boundary of this moduli space with that of the interior. As with other sign related issues, the proof of this result is similar to that in \cite[Appendix A]{Ab12} and we omit the calculations.

\begin{lem} \label{lemma: sign difference of multiple moving lagrangians}
	The product orientation of the strata (\ref{eqn:exotic boundary of moduli space}) differs from the boundary orientation by the following equation: 
	\begin{equation}\label {eq: signs for multiple breakings}
	\sum_{k = 1}^{r} d_k \cdot [\sum_{i = 1}^{d_{k-1}} \mu(\sigma_i) + \sum_{j= 1}^{k-1} \mu(x_j)] 
	+ \sum_{k = 1}^{r-1} \mu(x_k) \cdot[\Sigma_k] + \sum_{k = 1}^{r} (\sum_1^k d_i) (\sum_1^{k-1} d_i),
	\end{equation}
	where $\Sigma_k$ denote the sum of all $\mu(\sigma_k)$ for $k$ in the last $k$ tuples of the partition $\{1,\cdots, d_1\}, \cdots, \{d_1 +\cdots, d_{r-1} + 1, \cdots, d\}$ for $d$.

	The difference between the orientation on the stratum $\Discbar_{2d}$ corresponding to \eqref{eqn:normal boundary strata of fiber} and the boundary orientation is 
	\begin{equation}
		2d + 1.
	\end{equation}
\end{lem}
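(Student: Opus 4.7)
The plan is to follow the determinant-line calculation in \cite[Appendix A]{Ab12} almost verbatim, treating the cubical-chain factors as additional degree shifts. First I would fix an orientation convention for $\Discbar_{2d+1}(\vsigma; x)$: choose a preferred rigid element of the interior and trivialize the determinant line of its linearized $\dbar$-operator so that the orientation factors as a tensor product of the orientation lines $\ro_{x_i}$ of the boundary chords, the orientation of the cubical parameter cube $I^{k_1}\times\cdots\times I^{k_d}$, and the orientation line dual to $\ro_x$. Record analogous decompositions for each factor of the stratum \eqref{eqn:exotic boundary of moduli space}, including the trunk $\Discbar(x; x_1,\ldots, x_r)$, whose orientation inherits the $\dagger$-sign of Equation \eqref{eq:dagger_sign}.

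For part one, a collar neighborhood of the stratum \eqref{eqn:exotic boundary of moduli space} inside $\Discbar_{2d+1}(\vsigma; x)$ is modelled by the product of the stratum with $r$ positive gluing parameters, one per outgoing end of a level-two disc, whose common diagonal is the outward-pointing normal. Comparing the induced boundary orientation with the product orientation therefore reduces to three independent Koszul rearrangements. Permuting the orientation line of each intermediate chord $x_k$ past the data indexed by the later blocks produces the term $\sum_{k=1}^{r-1} \mu(x_k)\cdot \Sigma_k$. Permuting the cubical factors of block $k$ past the accumulated orientation data of the earlier blocks produces the mixed term $\sum_k d_k\bigl[\sum_i \mu(\sigma_i) + \sum_{j<k}\mu(x_j)\bigr]$. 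Finally, reorganizing the $r$ gluing parameters into a block-compatible order contributes the purely numerical quadratic term $\sum_k (\sum_1^k d_i)(\sum_1^{k-1} d_i)$. Adding these yields \eqref{eq: signs for multiple breakings}.

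For part two, the degeneration $z_{2i}\to z_{2i+1}$ is a face of $\ZZbar_{2d+1}$ of a different flavor from bubbling: no holomorphic disc breaks off, and the gluing parameter is simply the positive separation of the two colliding marked points. A collar is modelled by $[0,\epsilon)\times \Discbar_{2d}(\vsigma_{\mathfrak{i}}; x)$, and moving the outward normal past the entire orientation data of the boundary stratum — an effective degree shift of $2d$, together with one additional flip from the outward-pointing convention — produces the overall sign $(-1)^{2d+1}$.

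The principal obstacle is bookkeeping rather than mathematics: one must commit to a single global ordering of orientation lines (chords versus cubical parameters versus gluing parameters), a single sign convention for the outward normal, and a single placement of the dagger sign from $\mu_r$, then verify that these propagate coherently through every rearrangement above. This is precisely why the corresponding calculation in \cite[Appendix A]{Ab12} is long, and why the author here chooses to cite it rather than reprise it; the only genuinely new ingredient over that reference is the presence of cubical parameters, whose contribution is entirely captured by the degrees $|\sigma_i|$ and so slots into the existing framework without structural change.
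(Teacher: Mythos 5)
The paper offers no proof of this lemma --- it states only that the argument is ``similar to that in \cite[Appendix A]{Ab12}'' and omits the calculation --- so there is no in-paper derivation to compare against. Your proposal follows the same strategy (defer to that reference, organize the answer as a sequence of Koszul rearrangements of determinant-line factors, treat cubical chains as extra degree shifts) and is structurally consistent with it.

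One attribution in your sketch looks off. You ascribe the middle term $\sum_k d_k\bigl[\sum_i \mu(\sigma_i)+\sum_{j<k}\mu(x_j)\bigr]$ to permuting ``the cubical factors of block $k$'' past the earlier orientation data. But the prefactor in that term is the integer $d_k$, not the total cubical degree $\sum_{i\in\text{block }k}|\sigma_i|$ of block $k$. The quantity $d_k$ equals $\dim\ZZ_{2d_k}+1=(d_k-1)+1$, i.e., the dimension of the level-two disc's Stasheff modulus together with its gluing parameter, so this shift should be credited to moving the modulus and gluing factor of block $k$ past the earlier data --- not the cubical parameters, whose degrees $|\sigma_i|$ appear inside the bracket being multiplied rather than as the multiplier. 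Your first and third attributions (the $\mu(x_k)\cdot\Sigma_k$ term from transposing intermediate chord lines, and $\sum_k\bigl(\sum_1^k d_i\bigr)\bigl(\sum_1^{k-1}d_i\bigr)$ from reordering gluing parameters) look plausible, as does the $(-1)^{2d+1}$ for part two. Since neither you nor the paper carries out the determinant-line bookkeeping explicitly, this misattribution is the only substantive thing to flag, but it would matter as soon as you tried to actually reproduce the sign formula.
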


\begin{figure}\label{fig:boundary_of_d_inputs}
	\centering
	\begin{tikzpicture}
	\newcommand*{\radius}{1}
	\newcommand*{\smallradius}{.5*\radius}	
	\newcommand*{\tinyradius}{0.025}
	\newcommand*{\length}{5*\radius}

	\draw (60:\radius) arc (60:-240:\radius);
	\draw (60:\radius) to [out=120, in=-90] (0.3*\radius, 1.4*\radius);
	\draw (120:\radius) to [out=60, in=-90](-.3*\radius, 1.4*\radius);
	\draw [green,->](90:.9*\radius) to (0,1.4*\radius);
	\coordinate[label=above:\textcolor{green}{$x$}]() at (78:1.1*\radius);
	
	\draw [fill=blue] (170:\radius) circle (2*\tinyradius);
	\draw [fill=blue] (185:\radius) circle (2*\tinyradius);
	\coordinate [label=left:\textcolor{red}{$\sigma_1$}]() at (175:\radius);
	\begin{scope}[rotate= 40]
	\draw [fill=blue] (170:\radius) circle (2*\tinyradius);
	\draw [fill=blue] (185:\radius) circle (2*\tinyradius);
	\coordinate [label=left:\textcolor{red}{$\sigma_2$}]() at (180:\radius);
	\end{scope}
	
	\coordinate [label=below:\textcolor{red}{$\cdots$}]() at (-90:\radius);
	\begin{scope}[rotate= 120]
	\draw [fill=blue] (170:\radius) circle (2*\tinyradius);
	\draw [fill=blue] (185:\radius) circle (2*\tinyradius);
	\coordinate [label=below:\textcolor{red}{$\sigma_i$}]() at (180:\radius);
	\end{scope}
	
	\coordinate [label=below:\textcolor{red}{$\cdots$}]() at (-25:1.2*\radius);
	
	\begin{scope}[rotate= 175]
	\draw [fill=blue] (170:\radius) circle (2*\tinyradius);
	\draw [fill=blue] (185:\radius) circle (2*\tinyradius);
	\coordinate [label=right:\textcolor{red}{$\sigma_d$}]() at (175:\radius);
	\end{scope}

	\draw[line width=.8,->] (.6*\length,0) to (.4*\length, 0);
	\draw [line width=.8,->](-.5*\length,0) to (-.3*\length,0);
	
	\begin{scope}[shift={(-\length,0)}]
	\draw (60:\radius) arc (60:-240:\radius);
	\draw (60:\radius) to [out=120, in=-90] (0.3*\radius, 1.4*\radius);
	\draw (120:\radius) to [out=60, in=-90](-.3*\radius, 1.4*\radius);
	\draw [green,->](90:.9*\radius) to (0,1.4*\radius);
	\coordinate[label=above:\textcolor{green}{$x$}]() at (78:1.1*\radius);
	
	\draw [fill=blue] (170:\radius) circle (2*\tinyradius);
	\draw [fill=blue] (185:\radius) circle (2*\tinyradius);
	\coordinate [label=left:\textcolor{red}{$\sigma_1$}]() at (175:\radius);
	
	\coordinate [label=left:\textcolor{red}{$\cdots$}]() at (200:\radius);
	
	\begin{scope}[rotate=30]
	
	\draw [fill=blue] (210:\radius) circle (2*\tinyradius);
	\draw [fill=blue] (235:\radius) circle (2*\tinyradius);
	\coordinate [label=below:\textcolor{red}{$\sigma_i$}]() at (210:\radius);
	\draw[fill=blue](250:\radius) circle (2*\tinyradius);
	\coordinate [label=below:\textcolor{red}{$\sigma_{i+1}$}] () at (245:\radius);
	\end{scope}
	
	\begin{scope}[rotate= 170]
	\draw [fill=blue] (170:\radius) circle (2*\tinyradius);
	\draw [fill=blue] (185:\radius) circle (2*\tinyradius);
	\coordinate [label=right:\textcolor{red}{$\sigma_d$}]() at (175:\radius);
	\end{scope}
	
	\coordinate [label=right:\textcolor{red}{$\cdots$}] () at (-70:\radius);
	\end{scope} 

	\begin{scope}[shift={(1.2*\length, 0)}]
	\draw [green,->](90:.9*\radius) to (0,1.4*\radius);
	\coordinate[label=above:\textcolor{green}{$x$}]() at (78:1.1*\radius);
	\coordinate[label=below:$x_1$](I1) at (-2*\radius, -.5*\radius);
	\coordinate [label=below: $x_2$](I2) at (-\radius, -.5*\radius);
	\coordinate (I3) at (0, -.5*\radius);
	\coordinate [label=below:$x_r$](I4) at (2*\radius, -.5*\radius);
	\draw (-.3*\radius, 1.4*\radius) .. controls +(-90:1.2) and +(90:1) .. (-2.2*\radius, -.5*\radius);
	\draw (.3*\radius, 1.4*\radius) .. controls +(-90:1.2) and +(90:1) .. (2.2*\radius, -.5*\radius);
	\draw (-1.8*\radius,-.5*\radius) .. controls + (90:0.6) and +(90:0.6) .. (-1.2*\radius, -.5*\radius) ;
	\draw (-.8*\radius,-.5*\radius) .. controls + (90:0.6) and +(90:0.6) .. (-.2*\radius, -.5*\radius) ;
	\draw (1.8*\radius, -.5*\radius) .. controls + (90:0.6) and +(90:0.6) .. (1.2*\radius, -.5*\radius);
	\coordinate [label=below:$\cdots$] () at (.5*\radius, -\radius);
	\coordinate [label=below:$x_i$] () at (.5*\radius, -.5*\radius);
	
	\renewcommand{\smallradius}{.35*\radius}
	
	\begin{scope}[shift={(-2*\radius, -1.5*\radius)}]
	\draw (60:\smallradius) arc (60:-240:\smallradius);
	\draw (60:\smallradius) [out=120, in =-90] to (.5*\smallradius, 2*\smallradius);
	\draw (120:\smallradius) [out=60, in= -90] to (-.5*\smallradius, 2*\smallradius);
	\coordinate[label=below:\tiny{$\Disc_{2s_1+1}(\sigma_1,\cdots,\sigma_{s_1};x_1)$}]() at (0,-3*\smallradius);
	\draw [dashed,->](0,-3*\smallradius) to [bend left] (0,0);
	\draw [green, ->](0,\smallradius) to (0, 2*\smallradius);
	
	\draw [fill=blue] (170:\smallradius) circle (\tinyradius);
	\draw [fill=blue] (185:\smallradius) circle (\tinyradius);
	\coordinate [label=left:\textcolor{red}{\tiny{$\sigma_1$}}]() at (175:\smallradius);
	\begin{scope}[rotate=40]
	\draw [fill=blue] (170:\smallradius) circle (\tinyradius);
	\draw [fill=blue] (185:\smallradius) circle (\tinyradius);
	\coordinate [label=left:\textcolor{red}{\tiny{$\sigma_2$}}]() at (185:\smallradius);
	\end{scope}
	
	\coordinate [label=below:\textcolor{red}{\tiny{$\cdots$}}]() at (-75:.8*\smallradius);
	
	\begin{scope}[rotate=230]
	\coordinate [label=above:\textcolor{red}{\tiny{$\sigma_{s_1}$}}]() at (175:.8*\smallradius);	
	\draw [fill=blue] (170:\smallradius) circle (\tinyradius);
	\draw [fill=blue] (185:\smallradius) circle (\tinyradius);
	\end{scope}
	\end{scope}
	
	\begin{scope}[shift={(-1*\radius, -1.5*\radius)}]
	\draw (60:\smallradius) arc (60:-240:\smallradius);
	\draw (60:\smallradius) [out=120, in =-90] to (.5*\smallradius, 2*\smallradius);
	\draw (120:\smallradius) [out=60, in= -90] to (-.5*\smallradius, 2*\smallradius);
	\draw [green, ->](0,\smallradius) to (0, 2*\smallradius);
	\draw [fill=blue] (150:\smallradius) circle (\tinyradius);
	\draw [fill=blue] (175:\smallradius) circle (\tinyradius);
	\begin{scope}[rotate=70]
	\draw [fill=blue] (150:\smallradius) circle (\tinyradius);
	\draw [fill=blue] (175:\smallradius) circle (\tinyradius);
	\end{scope}
	
	\begin{scope}[rotate=200]
	\draw [fill=blue] (150:\smallradius) circle (\tinyradius);
	\draw [fill=blue] (175:\smallradius) circle (\tinyradius);
	\coordinate [label=right:\textcolor{red}{\tiny{$\sigma_{s_1+s_2}$}}]() at (150:0.8*\smallradius);
	\end{scope}
	
	\end{scope}

	\begin{scope}[shift={(2*\radius, -1.5*\radius)}]
	\draw (60:\smallradius) arc (60:-240:\smallradius);
	\draw (60:\smallradius) [out=120, in =-90] to (.5*\smallradius, 2*\smallradius);
	\draw (120:\smallradius) [out=60, in= -90] to (-.5*\smallradius, 2*\smallradius);
	\draw [green, ->](0,\smallradius) to (0, 2*\smallradius);
	\draw [fill=blue] (150:\smallradius) circle (\tinyradius);
	\draw [fill=blue] (175:\smallradius) circle (\tinyradius);
	\begin{scope}[rotate=70]
	\draw [fill=blue] (150:\smallradius) circle (\tinyradius);
	\draw [fill=blue] (175:\smallradius) circle (\tinyradius);
	\end{scope}
	
	\begin{scope}[rotate=210]
	\draw [fill=blue] (150:\smallradius) circle (\tinyradius);
	\draw [fill=blue] (175:\smallradius) circle (\tinyradius);
	\coordinate [label=right:\textcolor{red}{\tiny{$\sigma_{d}$}}]() at (150:\smallradius);
	\end{scope}
	\coordinate[label=below:\tiny{$\Disc_{2s_r+1}(\sigma_{s_1+\cdots+s_{r-1} +1},\cdots,\sigma_d;x_r)$}]() at 2*\smallradius,-3*\smallradius);
	\draw [dashed,->](0,-3*\smallradius) to [bend right] (0,0);
	
	\end{scope}
	
	\end{scope}

	\end{tikzpicture}
	\caption{boundary strata of one-punctured discs with $2d$ boundary marked points}
\end{figure}

\subsection{Proof of main theorem \eqref{main theorem}}\label{sec: proof_of_main_theorem}
	We now extend $\F^1$ to a sequence of multi-linear maps of all orders $d\geq 1$:
	\begin{align*}
	\F^d:C_*(\Omega_{\LL_d,\LL_{d-1}}\tLag) \otimes \cdots \otimes C_*(\Omega_{\LL_1,\LL_0}\tLag)  \to \CW_b^*(\LL_d,\LL_0)[1-d]\\
	\end{align*}
	To be more precise, \begin{equation}
		\F^d:C_{k_d}(\Omega_{\LL_d,\LL_{d-1}}\tLag)\otimes \cdots \otimes C_{k_1}(\Omega_{\LL_1,\LL_0}\tLag)  \to \CW_b^{-|\vec{k}|-d+1}(\LL_d,\LL_0),\\
	\end{equation}where $|\vec{k}|=\sum_{i=1}^{d}k_i$.
	The collection $\{\F^d\}$ should satisfy the following equations:
	\begin{multline}\label{FunctorA_infty relations}
	\sum_r \sum_{\substack{s_1,\cdots, s_r}}\mu^r_\CW(\F^{s_r}(\sigma_d,\cdots \sigma_{d-s_r+1}),\cdots, \F^{s_1}(\sigma_{s_1},\cdots, \sigma_1))\\
	=\sum_{\substack{m,n}}(-1)^{\maltese_n}\F^{d-m+1}(\sigma_d,\cdots,\sigma_{n+m+1},\mu_m^P(\sigma_{n+m},\sigma_n,\cdots, \sigma_{n+1}),\cdots, \sigma_1),
	\end{multline}
	where the left hand side is the sum over all $ r \geq 1$ and partitions $s_1+\cdots +s_r=d$, the right hand sum is over all possible $1\leq m\leq d, 0\leq n\leq d-m$, and $\maltese_n$ is defined in the same way as in (\ref{Maltese_sign}).
	Note that in our definition of the Pontryagin category $\Moore(\tLag)$, the only nontrivial $A_\infty$ operations are $\mu^P_1,\mu^P_2$. Thus  the above equation (\ref{FunctorA_infty relations})can be rewritten as 
	\begin{align}\label{eq:shortened_A_infty_relations}
	 &\sum_{r}\sum_{\vec{s}} \mu_{\CW}^r(\F^{s_r}(\cdots),\cdots,\F^{s_1}(\cdots))\\
	 =&(-1)^{\maltese_n}\F^{d}(\sigma_d,\cdots,\sigma_{n+2},\mu^P_1(\sigma_{n+1}),\sigma_n, \cdots, \sigma_1)\\
	 +&(-1)^{\maltese_n}\F^{d-1}(\sigma_d,\cdots,\sigma_{n+3},\mu^P_2(\sigma_{n+2},\sigma_{n+1}), \sigma_n,\cdots,\sigma_1).
	\end{align}
	Following the same strategy as the proof of Lemma (\ref{lem:chain_map}), we may equip these moduli spaces with appropriate cubical fundamental chains:
	\begin{lem}\label{lem:higher_homotopy_fundamental_chains}
		There exists a family of fundamental chains:
		\begin{equation}\label{eq:fundamental_chains_higher_homotopy}
			[\Discbar_{2d+1}(\vec{\sigma};x)] \in C_*(\Discbar_{2d+1}(\vec{\sigma};x)),
		\end{equation}
		such that 
		\begin{multline}\label{eq:relations_for_higher_chains}
			\del [\Discbar_{2d+1}(\vec{\sigma};x)] =\sum (-1)^{2d +1}[\Discbar_{2d}(\sigma_d\cdots \sigma_{i\cup (i+1)},\cdots, \sigma_1;x)]  \\
			+\sum (-1)^{\dagger}[\Discbar_{2d_r+1}(\sigma_d,\cdots, \sigma_{\sum_{i=1}^{r-1} d_i +1}; x_r)]
			 \times \cdots \times[\Discbar_{2d_1+1}(\sigma_{d_1},\cdots, \sigma_1;x_1)] \times [\Discbar_{r+1}(x_r,\cdots, x_1; x)]\\
			+\sum_{i=1}^{d} \sum_{\sigma' \subset \del \sigma_i} [\Discbar_{2d+1}(\sigma_d,\cdots, \sigma', \cdots, \sigma_1; x)],
		\end{multline}
		where $\dagger$ is an expression such that Equation (\ref{eq: signs for multiple breakings}) holds.
	\end{lem}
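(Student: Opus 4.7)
\textbf{Proof proposal for Lemma \ref{lem:higher_homotopy_fundamental_chains}.}
The plan is to construct the fundamental chains $[\Discbar_{2d+1}(\vec\sigma;x)]$ by induction on the dimension $n = |\vec k| + d - 1$ of the moduli space, where $|\vec k| = \sum_{i=1}^{d} k_i$ and $k_i = |\sigma_i|$. The base case $d = 1$, $|\vec k| = 0$ is exactly the choice made in Lemma \ref{lem:chain_map} (via equation \eqref{eq:boundary_strata}); more generally the case $d = 1$ for any $k_1$ follows from the same argument since only the two stratum types in Proposition \ref{prop:boundary strata of Discbar_3} appear. For the inductive step, assume that fundamental chains have been chosen for every moduli space $\Discbar_{2d'+1}(\vec\sigma';x')$ whose dimension is strictly smaller than $n$. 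Every boundary stratum listed in the decomposition of $\partial \Discbar_{2d+1}(\vec\sigma;x)$ is either a space of strictly fewer inputs (the collapse stratum $\Discbar_{2d}(\ldots,\sigma_{i\cup(i+1)},\ldots;x)$), a strict product of lower-dimensional moduli spaces (the bubbling stratum involving the $\Discbar_{2d_k+1}(\cdots;x_k)$ and $\Discbar_{r+1}(x_r,\ldots,x_1;x)$ factors), or a space on a cube of smaller dimension ($\Discbar_{2d+1}(\ldots,\sigma',\ldots;x)$ with $|\sigma'| < k_i$). Thus the right-hand side of \eqref{eq:relations_for_higher_chains}, assembled from the inductively chosen fundamental chains, is a well-defined $(n-1)$-cycle candidate $B \in C_{n-1}(\partial \Discbar_{2d+1}(\vec\sigma;x))$, with signs $\dagger$ provided by Lemma \ref{lemma: sign difference of multiple moving lagrangians}.

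The heart of the argument is to verify $\partial B = 0$. Each codimension-two stratum of $\Discbar_{2d+1}(\vec\sigma;x)$ appears in the boundary of exactly two distinct codimension-one strata, and the coherence of the decomposition needs to match the three algebraic identities encoded in the construction: the DG relation $(\mu_1^P)^2 = 0$ and the Leibniz rule for $\mu_2^P$ on the Pontryagin side; the $A_\infty$-relations for $\mu^r_\CW$ on the wrapped side (Proposition \ref{prop:a_infty_structure}); and the combinatorial compatibility between marking collapse and multi-bubble breaking on $\ZZbar_{2d+1}$ described in Section \ref{sec:abstract_moduli_space_of_upper_half_plane}. Concretely, for each pair of adjacent marking collapses that commute, or for each nested partition of $\{1,\dots,d\}$ that refines another, one checks that the two boundary contributions to $\partial B$ come with opposite signs. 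The signs are dictated by Lemma \ref{lemma: sign difference of multiple moving lagrangians} (which gives the difference between product and boundary orientation on each codimension-one face), by the $\maltese_n$ conventions inherited from Proposition \ref{prop:a_infty_structure}, and by the inductive relations already satisfied by the chains on lower-dimensional spaces. This is the main obstacle, and it is essentially a bookkeeping task modeled on \cite[Appendix A]{Ab12}.

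Once $\partial B = 0$ is established, one fills in the fundamental chain. The Gromov compactification $\Discbar_{2d+1}(\vec\sigma;x)$ is a stratified topological manifold with corners (by the standard gluing theorem in Appendix of the paper), so the relative top homology $H_n(\Discbar_{2d+1}(\vec\sigma;x),\partial\Discbar_{2d+1}(\vec\sigma;x))$ is generated by its fundamental class, and the chain $B$ represents the boundary of this class. Cubical chains suffice to produce a filling: one chooses any cubical subdivision of $\Discbar_{2d+1}(\vec\sigma;x)$ compatible with the subdivisions already fixed on the boundary strata (so that $B$ is literally the boundary cubical chain of the subdivision on $\partial$), then takes the sum of the top-dimensional cubes with induced orientation as $[\Discbar_{2d+1}(\vec\sigma;x)]$. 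Compatibility of the subdivisions is arranged inductively along with the chains themselves, exploiting that each newly encountered boundary stratum is either a smaller cube-parametrized moduli space or a product of such, for which a cubical subdivision is already in hand.
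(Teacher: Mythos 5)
Your proposal is correct and follows essentially the same strategy the paper uses: fundamental chains are constructed inductively on the dimension of the moduli space by filling in a boundary cycle assembled from previously chosen fundamental chains on the codimension-one strata, exactly as the paper does in Lemma~\ref{lem:chain_map} and asserts carries over verbatim for Lemma~\ref{lem:higher_homotopy_fundamental_chains}. The paper's own treatment is terse — it defines the fundamental chain to be ``any chain whose boundary is'' the prescribed sum, tacitly relying on the moduli space being a compact manifold with corners, and on the sum being a cycle by codimension-two cancellation; you make both of these steps explicit and correctly flag the cycle verification ($\partial B = 0$ via the Stasheff combinatorics and Lemma~\ref{lemma: sign difference of multiple moving lagrangians}) as the real content, which the paper silently omits. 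One small caution: the moduli space need not carry an actual cubical subdivision for cubical chains to represent its fundamental class — it suffices that $\Discbar_{2d+1}(\vec\sigma;x)$ is a compact oriented stratified topological manifold so $H_n(\Discbar,\partial\Discbar)\cong\Z$, and then any cubical relative cycle generating this group will do; so your last paragraph's appeal to compatible cubical subdivisions is more structure than is needed, though it is not wrong.
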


Similar to the definition of evaluation map of $\Discbar(\sigma; x)$ in \eqref{sec:chain_map}, we also define the evaluation on $\Discbar_{2d +1}(\vsigma; x)$ as follows:
\begin{equation}
	ev: \Discbar_{2d+1}(\vsigma; x) \to \Chord(L_0, L_1),
\end{equation}
which takes the punctured disc to its infinite end $x$. We obtain commutative diagrams similar to \eqref{eq: evaluation on neck breaking boundary} and \eqref{eq: evaluation on interior}.

With the evaluation map, we are set to define the higher operations $\F^d$:
	\begin{defin}\label{def:definition_of_higher_morphism}
		The operations $\F^d$ for higher homotopies are given by the following equation:
		\begin{align}
			\F^d:C_{k_d}(\Omega_{\LL_d,\LL_{d-1}}\tLag)\otimes &\cdots \otimes C_{k_1}(\Omega_{\LL_1,\LL_0}\tLag)  \to \CW_b^{-|\vec{k}|-d+1}(\LL_d,\LL_0)\\\label{eq:defining_equation_for_higher_homotopies}
			&\sigma_d\otimes \cdots \otimes \sigma_1 \mapsto \sum_{\di (\Disc(\sigma_d,\cdots,\sigma_1;x))=0} (-1)^{\dagger} ev_*([\Discbar_{2d+1}(\vsigma; x)])
		\end{align} 
	\end{defin}
Now we may proceed to give the proof of Theorem \ref{main theorem} based on Lemma \ref{lem:chain_map} and \ref{lem:higher_homotopy_fundamental_chains}.
\begin{proof}[Proof of Theorem \ref{main theorem}]
	From the description of the moduli spaces $\ZZbar_{2d}$ and their boundary strata from section (\ref{sec:moduli_of_upper_half_plane}) and the definition above; we realize that this is quite similar to the proof of Lemma (\ref{lem:chain_map}). We just need to identify the boundaries of the moduli space (\ref{eq:fundamental_chains_higher_homotopy}) and that of the equations of the $A_\infty$ relations:
	\begin{enumerate}
		\item the left hand side of Equations (\ref{eq:shortened_A_infty_relations}) corresponds to the 2nd terms on the right hand side of Equation (\ref{eq:fundamental_chains_higher_homotopy}) (when $r\geq 2$) and the left hand side of Equation (\ref{eq:fundamental_chains_higher_homotopy}) (when $r=1$).
		\item the 2nd terms on the right hand side of Equation (\ref{eq:shortened_A_infty_relations}) corresponds to the Left hand side of Equation (\ref{eq:fundamental_chains_higher_homotopy}).
		\item the first summands on the right hand side of Equation (\ref{eq:shortened_A_infty_relations}) corresponds to the 3rd summands of Equation (\ref{eq:fundamental_chains_higher_homotopy}).
	\end{enumerate}
\end{proof}


\section{Proof of Homotopy Equivalence on cotangent bundles}	

In this section we work with the cotangent bundle $T^*Q$ of a closed manifold $Q$. It can be realized at the completion of the unit disc bundle and the boundary $\del M^{in}$ is the unit circle bundle; the radial coordinate $r$ in this case is $|p|$ where $p$ denotes the cotangent vectors. 
Note that there is a natural map $\eta: C_*(\Omega_{q_0, q_1}Q) \to C_{*}(\Omega_{T_{q_0}^*Q, T_{q_1}^*Q}{\Lag})$ by taking a path in the zero section to the path of cotangent fibers in the space of Lagrangians with Legendrian boundary. We shall construct a chain homotopy equivalence between $Id_{C_{*}(\Omega_q Q)}$ and $f\circ F\circ \eta$ in the following commutative diagram:
	\[\xymatrix{
	C_{*}(\Omega_q Q)\ar@{.>}[dr] \ar^{\eta}[r]  &C_{*}(\Omega_{T_q^*Q}{\Lag}) \ar^{\F}[d]\\
	&\Cal{CW}^{-*}(L,L; H) \ar@/^2pc/|-{f}[ul]
}
\]
We use the concatenation of paths on the zero section $Q$ and evaluation of moduli spaces of pseudo-holomorphic discs with moving Lagrangian boundary condition defined below to show the homotopy equivalence in the later part of this section.

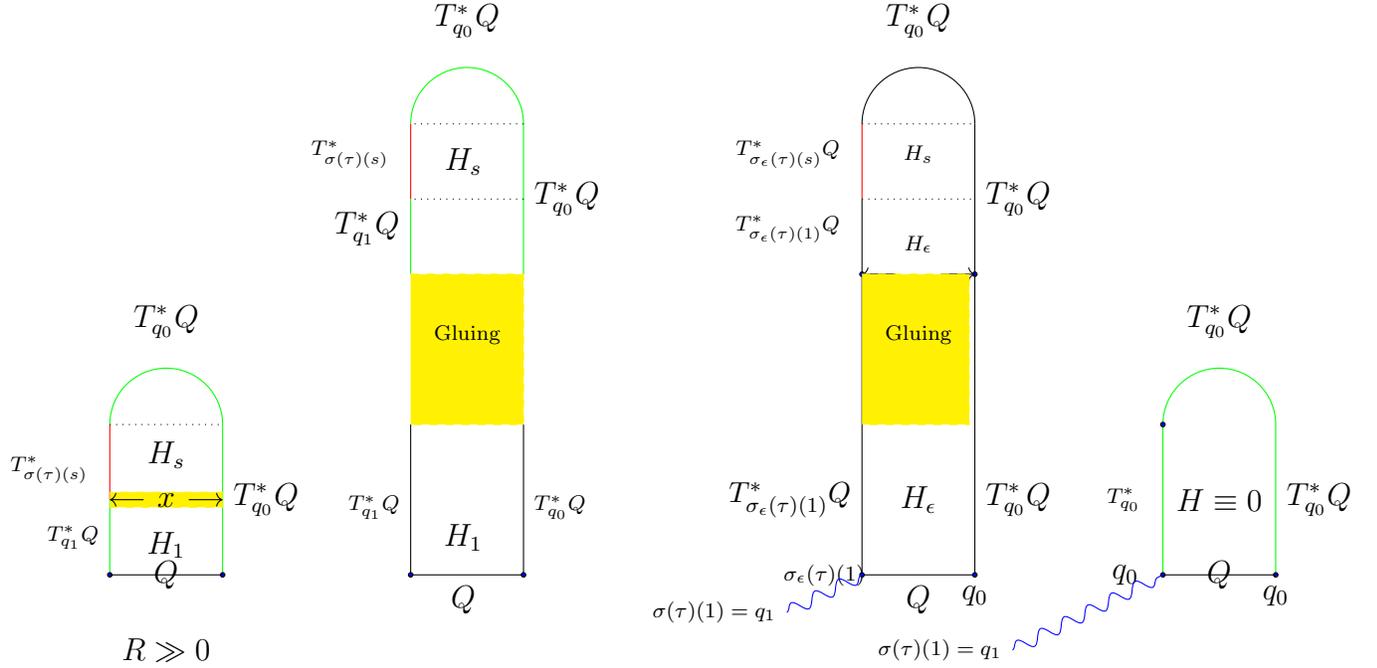
\begin{figure}\label{fig: Gluing of two moduli spaces}
	\centering
	\begin{tikzpicture}
	\begin{scope}[shift={(-7, 0)}]
	\newcommand*{\length}{1.5}
	\newcommand*{\tinyradius}{.03}
	\newcommand*{\height}{2}
	\draw[green](0,0) to (0, .5*\height);
	\draw[green](\length,0) to (\length, \height);
	\draw[green] (\length, \height) arc (0:180: .5*\length);
	\coordinate[label = right: $T^*_{q_0}Q$]() at (\length, .5*\height);
	\coordinate[label = above:$T^*_{q_0}Q$]() at (.5*\length, 1.5*\height);
	\coordinate[label = left: \tiny{$T^*_{\sigma(\tau)(s)}$}] () at (-.1*\length, .7*\height);
	\coordinate[label = left: \tiny{$T^*_{q_1}Q$}]() at (0, .25*\height);
	\draw[red] (0,\height) to (0,.5*\height);
	\draw[fill = blue](0,0) circle(\tinyradius);
	\draw[fill = blue](\length, 0) circle (\tinyradius);
	\draw[dotted](0,\height) to(\length, \height);
	\coordinate[label = center: $H_s$]() at (.5*\length, .8*\height);
	\draw[dotted](0,.5*\height) to (\length, .5*\height);
	\coordinate[label = center:$H_1$]() at (.5*\length, .2*\height);
	\draw (0,0) to (\length, 0);
	
	\coordinate[label = center: $Q$] () at (.5*\length, 0);
	\filldraw [yellow, dashed, line width =.2] (0,.45*\height) --(\length, 0.45*\height)--(\length, .55*\height)--(0,.55*\height);
	\draw[<-](0, .5*\height) to (.3*\length,.5*\height);
	\draw[->](.7*\length, .5*\height) to (\length, .5*\height);
	\coordinate[label = center: $x$]() at (.5*\length,.5*\height);

	\coordinate[label = center: $R \gg 0$] () at (0.5*\length, -1);
	\end{scope}	
	
	\newcommand*{\length}{1.5}
	\newcommand*{\tinyradius}{.03}
	\newcommand*{\height}{2}

	\begin{scope}[shift = {(-3,0)}]
	\draw (0,0) to (0, \height);
	\draw (0,0) to (\length, 0);
	\draw (\length, 0) to (\length, \height);
	\draw[fill = blue](0,0) circle(\tinyradius);
	\draw[fill = blue](\length, 0) circle (\tinyradius);
	\coordinate[label = below: $Q$] () at (.7, 0);
	\coordinate[label = left: \tiny{$T_{q_1}^*Q$}]() at (0, .9);
	\coordinate[label = right: \tiny{$T_{q_0}^*Q$}]() at (\length, .9);
	\coordinate[label = center: $H_1$]() at (0.7, 0.5);
	
	\begin{scope}[shift = {(0,4)}]
	\draw[green](0,0) to (0, .5*\height);
	\draw[green](\length,0) to (\length, \height);
	\draw[green] (\length, \height) arc (0:180: .5*\length);
	\coordinate[label = right: $T^*_{q_0}Q$]() at (\length, .5*\height);
	\coordinate[label = above:$T^*_{q_0}Q$]() at (.5*\length, 1.5*\height);
	\coordinate[label = left: $T_{q_1}^*Q$]() at (0, .3 *\height);
	\coordinate[label = left: \tiny{$T^*_{\sigma(\tau)(s)}$}] () at (-.1*\length, .8*\height);
	
	\draw[red] (0,\height) to (0,0.5*\height);
	\draw[dotted] (\length, \height) to (0, \height);
	\draw[dotted] (\length, .5*\height) to (0,.5* \height);
	\coordinate[label = center: $H_s$]() at (.7, 0.75* \height);
	
	\filldraw [yellow, dashed, line width =.4] (0,0) --(\length, 0)--(\length, -2)--(0,-2);
	\coordinate[label = center: \tiny{Gluing}]() at (.5*\length, -.8);
	\end{scope}
	\end{scope}

	\begin{scope}[shift = {(3,0)}]
	\draw [line width =.4](0,0) to (0,4);
	\draw [line width =.4](\length,0) to (\length,4);
	\draw (0,0)to (\length,0);
	\coordinate[label = below: $Q$]() at (.5*\length,0);
	\coordinate[label = below:$q_0$] () at (\length, 0);
	\coordinate[label = center: \tiny{$\sigma_\epsilon(\tau)(1)$}] () at (-.5,0);
	\draw[fill=blue](0,0) circle (\tinyradius);
	\draw[fill=blue](\length, 0) circle (\tinyradius);
	\coordinate[label = left:\small{$T_{\sigma_\epsilon(\tau)(1)}^*Q$}] () at (0,1);
	\coordinate[label = right: $T_{q_0}^*Q$] () at (\length,1);
	\coordinate[label = center: $H_\epsilon$]() at (.5*\length, 1);
	\path[draw = blue, snake it] (-1, -.5) --(0,0);
	\coordinate[label = left:\tiny{$\sigma(\tau)(1) = q_1$}]() at (-1, -.5);
	
	\begin{scope}[shift = {(0,4)}]
	\draw(0,0) to (0, .5*\height);
	\draw(\length,0) to (\length, \height);
	\draw(\length, \height) arc (0:180: .5*\length);
	\coordinate[label = right: $T^*_{q_0}Q$]() at (\length, .5*\height);
	\coordinate[label = above:$T^*_{q_0}Q$]() at (.5*\length, 1.5*\height);
	\coordinate[label = left: \tiny{$T^*_{\sigma_\epsilon(\tau)(s)}Q$}] () at (-.1*\length, .8*\height);
	\draw[red] (0,\height) to (0,0.5*\height);
	\draw[fill = blue](0,0) circle(\tinyradius);
	\draw[fill = blue](\length, 0) circle (\tinyradius);
	\coordinate[label = left: \tiny{$T^*_{\sigma_\epsilon(\tau)(1)}Q$}] () at (-.1*\length, .3*\height);
	\draw[dotted](0, \height) to (\length, \height);
	\coordinate[label = center: \tiny{$H_s$}] () at (.5*\length, .8*\height);
	\draw[dotted](0,.5*\height) to (\length, .5*\height);
	\coordinate[label = center: \tiny{$H_\epsilon$}] () at (.5*\length, .2*\height);
	
	\draw[->] (.7*\length, 0) to (\length, 0);
	\draw[->](.3*\length, 0) to (0,0);
	
	\filldraw [yellow, dashed, line width =.4] (0,0) --(0.95*\length, 0)--(0.95*\length, -2)--(0,-2);
	\coordinate[label = center: \tiny{Gluing}]() at (.5*\length, -.8);
	\end{scope}
	\end{scope}

	\begin{scope}[shift = {(7,0)}]
	\draw[green](0,0) to (0, 1*\height);
	\draw[green](\length,0) to (\length, \height);
	\draw[green] (\length, \height) arc (0:180: .5*\length);
	\coordinate[label = right: $T^*_{q_0}Q$]() at (\length, .5*\height);
	\coordinate[label = above:$T^*_{q_0}Q$]() at (.5*\length, 1.5*\height);
	\coordinate[label = left: \tiny{$T^*_{q_0}$}] () at (-.1*\length, .5*\height);
	\draw[fill = blue](0,0) circle(\tinyradius);
	\draw[fill = blue](\length, 0) circle (\tinyradius);
	\draw[fill = blue](0,\height) circle(\tinyradius);
	\draw (0,0)to (\length, 0);
	\coordinate[label = center: $Q$]() at (.5*\length, 0);
	\coordinate[label = center: $H \equiv 0$] () at (.5*\length, .5*\height);
	\coordinate[label = center: $q_0$] () at (-.5, 0);
	\coordinate[label = below: $q_0$]() at (\length, 0);
	\path[draw = blue, snake it] (-2, -1) --(0,0);
	\coordinate[label = left:\tiny{$\sigma(\tau)(1) = q_1$}]() at (-2, -1);
	\end{scope}
	\end{tikzpicture}
	\caption{Homotopy between $Id$ and $f\circ F\circ \eta$}
		
\end{figure}

\subsection{Gluing with moduli space of half-strips}

 From \cite[Section~4]{Ab12} we have a moduli space $\mathcal{H}(x;q_0,q_1 )$ of pseudo holomorphic maps from the upper half space with 2 punctures on the boundary to $T^*Q$ such that the two punctures are mapped to $q_0, q_1 \in Q$ and the boundary component of the upper half plane between the two punctures is mapped into $Q$. The evaluation map from the compactification $\bar{\Cal{H}}(q_0,q_1,x) \to \Omega(q_0,q_1)$ gives the chain map $f^1: \CW^k_b(T_{q_0}^*Q, T_{q_1}^*Q) \to C_{k}(\Omega_{q_0,q_1}Q)$.


%
%

 Let $Z$ denote the Riemann surface that is a disk with two punctures $\xi_0, \xi_1$ on its boundary, which is bi-holomorphic to a strip $\R \times I$. 
 
 We denote the domain of the moduli spaces $\{\Cal{H}(x, q_0,q_1)\}$ by $S_+$ for any $x \in \Chord(T_{q_0}^*Q, T_{q_1}^*Q)$. $S_+$ is equipped with three strip-like ends $\epsilon_{\pm}: Z_{\pm} \to S_{+},$ where the positive ends are near the incoming points $q_0, x(t)$ and negative ends near  $q_1$ as defined in \cite[Section4]{Ab12}; the domain $S_{-}$ of $\{\Disc(\sigma,x,H,J_{\tau})\}$ has a negative strip-like end for $x(t)$. For any gluing parameter $\delta \in (0, 1)$, we set the associated gluing length to be $R = - \log(\delta)$. One then defines another domain 
 \begin{equation}\label{eqn: glued_strip}
 	S_R = S_+ \#_R S_{-}
 \end{equation}
 by first removing $\epsilon_{+}([R, +\infty) \times [0,1])$ in $S_+$ and $\epsilon_{-}((-\infty, -R] \times [0,1])$ in $S_{-}$, both corresponding to strip-like ends near the puncture $x$, from $S_{+} \sqcup S_{-}$, and then identifying the remaining finite pieces of the ends through the formula $\epsilon_{+}(s, t)\sim \epsilon_{-}(s - R, t)$. The positive and negative ends at infinity in $S$ are the ones inherited from $S_{+}$.
 
 We had imposed conditions on Hamiltonians (\ref{eq:quadratic Hamiltonian}) and the behavior of almost complex structures near strip-like ends (\ref{eq:almost_complex_structure_at_strip_like_ends}); by \cite[chapter 9]{seidel-book}, there is a consistent choice of Floer data and strip like-ends that for small values of the gluing parameter $\delta$. Therefore we may impose a new inhomogeneous Cauchy-Riemann equation on this new Riemann surface $S_R$, and this equation agrees with the original equations in the definition of the two moduli spaces when restricted to $S_{+} \setminus \epsilon_{+}([R, +\infty) \times [0,1])$ and $S_{-} \setminus \epsilon_{-}((-\infty, -R] \times [0,1])$ respectively and interpolates in the overlapping region of gluing. For every $R \in [1,\infty)$, let us denote the moduli space of the solutions to the new equation on $S_R$ by $\GG(\sigma; R)$. 
 
 Note that the domain $S_R$ is bi-holomorphic to the strip $Z$ with one component of the boundary mapped to the zero section $Q$, a constant Lagrangian. It is illustrated in the second picture of Figure (\ref{fig: Gluing of two moduli spaces}). The component that is mapped to $Q$ is drawn as the bottom interval in the picture, and the arc in the picture forms the other component. 

%
 
   When $R \to \infty$, the gluing theorem (eg. \cite{FOOO}) implies that the moduli space is homeomorphic to the union over all orbits $x$ of the Cartesian product of the moduli space of upper half space with 2 punctures $\Cal{H}(x; q_0, q_1)$ and that of our moduli space $\Disc(\sigma, x)$:
 \begin{equation}\label{eqn:isomorphism of gluing when R is infty}
 \GG(\sigma,R) \simeq
 \bigsqcup_{x\in \Chord(T_{q_0}^*Q, T_{q_1}^*Q)}\Disc(\sigma,x) \times \Cal{H}(x; q_0, q_1).
  \end{equation}
 Therefore, we may also define $\GG(\sigma,\infty)$ to be $\bigsqcup_{x\in \Chord(T_{q_0}^*Q, T_{q_1}^*Q)}\Disc(\sigma,x) \times \Cal{H}(x; q_0, q_1)$ for all given $\sigma \in C_{*}(\Omega_{(q_0, q_1)}Q)$.
 
 

%
%
 

 We may pick a smooth monotone map $ \nu: [1/2, 1] \to [1, \infty]$ such that $\nu$ maps a small neighborhood of $\frac{1}{2}$ in this half interval constantly to $1$.  We are going to work with this half interval from now on and still name its parameter $R$.
 Gluing  induces a family of maps parametrized by $\tau \in I^k$ on the boundary of $Z$ 
 \begin{equation} \label{eqn: Lagrangian moving boundary for gluing}
 \Phi: \del Z \times [1/2,1] \times I^k \to \Lag(T^*Q)
 \end{equation} 
 such that for each $\tau$, $\Phi_\tau: \del Z \times [1/2, 1]$ is locally constant near $1/2$ by our construction.
 Therefore, Equation (\ref{eqn:isomorphism of gluing when R is infty}) holds when $R$ is close to $1$ under this new parametrization.
 
\subsection{Moduli space of family of moving Lagrangian boundary conditions}\label{sec: sigma_epsilon}
 Given $\sigma : [0,1]^{k} \to \Omega_{(q_0,q_1)} Q$, namely $ [0,1]^{k}\times [0,1] \to Q$, a chain of paths on the compact manifold $Q$, we will define $\sigma_\epsilon: [0,1]^{k} \times [0,1] \to Q$ such that $\sigma_\epsilon(\tau) $ moves from $q_0$ to $q_\epsilon$ smoothly, where $q_\epsilon = \sigma(\tau)(\epsilon)$.
 We want to define a moduli space $\Disc(\sigma_\epsilon, q_0, H)$ of J-holomorphic curves with boundary conditions given by the above one parameter family. As for the Floer data, we are concerned with the moving boundary conditions and Hamiltonian perturbation as we require smoothness.
 
We use linear Hamiltonian flow to generate the moving Lagrangian co-fibers. Consider the following mollifier function:
 \[
 \phi: \R \to [0,1], 
 \phi = 
\begin{cases}
2e\cdot \exp(-\frac{1}{1-(2s)^2} ) & s \in (-1,1)\\
0 & |s|\geq 1\\
\end{cases}
 \]
 
 Notice $\phi|_{I}$ has compact support in $(-1/2, 1/2)$ and it is $C^\infty$ in this open interval.
 By convolution of the characteristic function $\chi$ of the unit ball $B_{1}$ inside $\R$ with the mollifier $\phi$ defined above, we obtain the function $\psi_{1/2}$:
 \[
 \psi_{1/2} (s) = \chi \ast \phi (s) = \int_{\R} \chi(s-t) \phi(t) dt =\int_{B_{1/2}} \chi(s-t)\phi(t)dt
 \]
 which is smooth and equal to $1$ inside $B_{1/2}$ and is supported inside $B_{3/2}$. This is observed with the fact that if $|s|\leq 1/2$ and $|t| \leq 1/2$, then $|s-t| \leq 1$, hence for $|s| \leq 1/2$,
 \[
 \int_{B_{1/2}} \chi_{1/2}(s-t)\phi(t)dt = \int_{B_{1/2}}\phi\  dt = 1.
 \]
\begin{defin}[]\label{defin:bump_function}
	$\psi(s):= \psi_{1/2}(s-\frac{1}{2})$ for $s \in [0,1]$.
\end{defin}
 So far, we have constructed a smooth function $\psi$ which is $1$ at $s =0$, $0$ at $s = 1$  and all order right (resp. left) differentials vanish at that these two points.
 We extend $\psi$ by letting $\psi(s) = 1$ for $s < 0$ and $\psi(s) = 1$ for $s >1$ and get a smooth function $\R \to [0,1]$ that is a smooth diffeomorphism restricted to $[0,1]$. 
 
   Now for any $\sigma, \tau$, $\sigma(\tau)$ is a loop in $Q$. In other words, it is a map $I \to Q$ where both ends are mapped to the same point and that differentials of all orders on both ends are exactly opposite. If we compose it with the inverse of $\psi$ we still get a loop $\sigma(\tau)\circ \psi^{-1}$ in Q that is smooth. Thus when $\tau$ traverses $I^k$, we get another cubical chain of based loops in $Q$. Let us denote it by $\tilde{\sigma}$. From the above construction we may write $\sigma $ as $\tilde{\sigma} \circ \psi$ to abbreviate for $\sigma(\tau)(s) = \tilde{\sigma}(\tau)(\psi(s))$ for any given $\tau \in I^{k}$.
 
 \begin{defin}[]\label{def: sigma_epsilon}
 	The boundary condition $\sigma_\epsilon : I^{k} \times \R \to [0,1]$ is given by:
 	\[\sigma_\epsilon  = \tilde{\sigma} \circ (\epsilon\cdot \psi)
 	\]
 	namely:
 	\[
 	\sigma_\epsilon(\tau)(s) := \tilde{\sigma}(\tau)(\epsilon \cdot \psi(s))\]
 	where $\psi$ is defined as above.
 \end{defin} 
 
 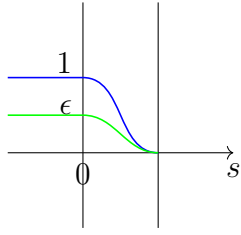
\begin{figure}\label{fig: psi_function}
 	\begin{tikzpicture}
 	\draw(0,-1) to (0,2);
 	\draw(1, -1) to (1,2);
 	\draw[->] (-1,0) to (2,0);
 	\coordinate[label = below: $s$] () at (2,0);
 	\coordinate[label = below:$0$]() at (0,0);
 	\draw [blue, line width =0.6] (0,1) [out =0, in = 180] to (1,0);
 	\draw[blue, line width =.6](-1,1) to (0,1);
 	\coordinate[label = left: $1$]() at (0, 1.2);
 	\draw[green, line width = .6](0, .5) [out = 0, in =180] to (1,0);
 	\draw[green, line width =.6](-1, .5) to (0, .5);
 	\coordinate[label = left:$\epsilon$]() at (0, .6);
 	\end{tikzpicture}
 	\caption{$\psi$ and $\psi_\epsilon$}
 \end{figure}
 
 From the construction above, we get the smoothness of $\sigma_\epsilon$ with respect to $s$. It is easy to see that it is also smooth with respect to $\epsilon$ as both $\tilde{\sigma}(\tau)$ and $\psi $ are smooth. Therefore we have the following lemma:
 \begin{lem}\label{lem: smoothness_in_s}
 	$\sigma_\epsilon$ is smooth with respect to both $s$ and $\epsilon$.
 \end{lem}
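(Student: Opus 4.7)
The plan is to reduce the claim to a standard composition-of-smooth-maps argument, where the only nontrivial content is verifying that the cutoff function $\psi$ constructed via convolution is genuinely $C^\infty$ on all of $\R$ after being extended by constants. First, I would observe that $\psi_{1/2} = \chi \ast \phi$ is smooth on $\R$: this is the standard regularization fact that convolving an $L^1_{\mathrm{loc}}$ function with a compactly supported $C^\infty$ mollifier produces a $C^\infty$ function, with derivatives given by $(\chi \ast \phi)^{(k)} = \chi \ast \phi^{(k)}$. Moreover $\operatorname{supp}(\psi_{1/2}) \subseteq B_{3/2}$, so $\psi_{1/2}$ and all of its derivatives vanish outside this compact set. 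Consequently its shift $\psi = \psi_{1/2}(\,\cdot - 1/2)$ has all one-sided derivatives of every order vanishing at the join points of the extension, so the piecewise definition (constant on $(-\infty,0]$, equal to $\psi_{1/2}(s-1/2)$ on a neighborhood of $[0,1]$, constant on $[1,\infty)$) assembles into a $C^\infty$ function $\psi: \R \to [0,1]$.

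Next, I would note that the map
\begin{equation*}
\mathrm{ev}: \R \times \R \to \R, \qquad (s,\epsilon) \mapsto \epsilon \cdot \psi(s),
\end{equation*}
is smooth as a product of a smooth function in $s$ with a linear function in $\epsilon$. Since $\tilde{\sigma}(\tau): \R \to Q$ is smooth by the preceding construction (it was obtained precisely by precomposing $\sigma(\tau)$ with $\psi^{-1}$ in order to smooth out the endpoints, where $\tilde\sigma(\tau)$ is declared to be constant outside $[0,1]$), the chain rule gives that
\begin{equation*}
\sigma_\epsilon(\tau)(s) \;=\; \tilde\sigma(\tau)\bigl(\mathrm{ev}(s,\epsilon)\bigr)
\end{equation*}
is smooth jointly in $(s,\epsilon)$, being a composition of smooth maps. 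Explicitly, $\partial_s \sigma_\epsilon(\tau)(s) = \epsilon \psi'(s)\, \tilde\sigma(\tau)'(\epsilon\psi(s))$ and $\partial_\epsilon \sigma_\epsilon(\tau)(s) = \psi(s)\, \tilde\sigma(\tau)'(\epsilon\psi(s))$, both of which are continuous, and higher derivatives follow by induction.

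There is no real obstacle here; the whole content of the lemma is that the careful mollifier construction of $\psi$ was designed exactly so that the extension by constants produces a globally smooth function, which in turn forces smooth dependence of $\sigma_\epsilon$ on its parameters. The only place one needs to be slightly careful is at the corner values $s \in \{0,1\}$ and $\epsilon \in \{0,1\}$, where the flatness of $\psi$ at the endpoints of $[0,1]$ ensures that $\sigma_\epsilon$ matches smoothly with the constant extensions of the boundary conditions required in the definition of the moduli space $\Disc(\sigma_\epsilon, q_0, H)$ of the next section.
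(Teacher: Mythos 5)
Your proposal is correct and follows essentially the same argument as the paper: both reduce the claim to smoothness of $\psi$ (via the standard mollifier/convolution fact) together with the asserted smoothness of $\tilde\sigma(\tau)$, and then invoke the chain rule on $\sigma_\epsilon(\tau)(s) = \tilde\sigma(\tau)(\epsilon\psi(s))$. The paper states this more tersely, while you spell out the chain rule and the vanishing of one-sided derivatives at the join points, but the logical content is the same.
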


 Now that we have dealt with smoothness, we are set to define the moduli space of the family of moving Lagrangian boundary conditions.

 First we require the Hamiltonian function $H$ to be quadratic in the distance to the zero section and large enough slope with respect to $s$ as in (\ref{eq:quadratic_Hamiltonian}), namely $H = c(s)\cdot |p|^2 $ for some smooth function $c(s)$ that vanishes outside $[-1,2]$. The reason for such a choice is for the result of a $C^0$ bound of the pseudo holomorphic curves given by the solutions of the inhomogeneous equations, the details are in Appendix \ref{Appendix: C^0_bound}.
 
 As for the family of almost complex structures $J_{\tau, z,\epsilon}$, they satisfy the condition laid out in Section \ref{sec:chain_map} and they are also parametrized by $\epsilon$ for the Cauchy-Riemann equation in the following definition.

 \begin{defin}
 	Given Floer data $(H, J_\tau, \gamma)$, we define the moduli space of the family of pseudo-holomorphic maps with moving Lagrangian boundary condition $\Disc(\sigma_\epsilon, q_0, H, J_\tau)$ to be pairs $(u, \tau)$ consisting of maps $u : Z = \R \times [0,1]$(parametrized by (s, t)) to $T^*Q$ and $\tau \in I^k$ satisfying the following conditions:
 	\begin{equation}\label{eqn:family_Lagrangian_boundary}
 		\left \{
 		\begin{aligned}
 		& (du-X_H\otimes \gamma)^{0,1}=0, \\
 		& u(z) \in Q \text{ if $z \in \R \times \{1\} \subseteq \del Z$ lies on the upper boundary of $Z$ },\\
 		& u(z) \in T_{q_0}^*Q \text{ if $z \in [1, \infty]\times \{0\} $ } ,\\
 		& u(z) \in T_{\sigma_\epsilon(\tau)(s) }^*Q \text{ if $z=(s,0) \in \del Z$ lies between $(0,0)$ and $(1,0)$},\\
 		& u(z) \in T_{q_\epsilon} \text{if $z \in [-\infty,0]\times \{0\}$ , }\\
 		& \lim_{s \to -\infty} u(s,\cdot) = q_\epsilon, \lim_{s\to \infty}u(s, \cdot) = q_0, \text{(asymptotic condition)}\\
 		\end{aligned}
 		\right \}
 	\end{equation}
 \end{defin}

In particular, when $\epsilon = 0$, we have the moduli space of curves $\Disc(\sigma_0, q_0)$ with constant boundary condition in $Q$ and $T_{q_0}^*Q$, which is a copy of $I^k$.

 \begin{figure}\label{fig: moving_boundary_condition}
 	\centering
 	\begin{tikzpicture}
 	\newcommand*{\tinyradius}{.03}
 	\draw[line width = 0.4] (3,0) to (-3,0);
 	\coordinate[label=above: $Q$]() at (0,1);
 	\draw[line width =0.4] (3,1) to (-3,1);
 	\coordinate[label=above:\small{$(du-X_H\otimes \gamma)^{0,1}=0$ }]() at (0,0);
 	
 	\coordinate[label=below:$T^*_{q_{\epsilon}} Q$]() at(-2.5,0);
 	\coordinate[label=below:$T^*_{q_0}Q$]() at (2.5,0);
 	\coordinate[label=below: $T^*_{\sigma_\epsilon(\tau)(s)}Q$]() at (0,0);
 	\draw[fill=blue] (-1.5,0) circle(\tinyradius);
 	\draw[fill=blue](1.5,0) circle (\tinyradius);
 	\draw[red, line width=0.6] (-1.5,0) to (1.5,0);

 	\end{tikzpicture}
 \end{figure}
 A picture of the boundary condition for the above Equation (\ref{eqn:family_Lagrangian_boundary}) is illustrated in Figure (\ref{fig: moving_boundary_condition}).
 
 
 For the discussion of later sections we want to re-parametrize the interval by $[0, 1/2]$ instead of $[0,1]$; this can be done easily by picking a smooth function $g: [1/2] \to [0,1]$ such that $g$ is locally constant near $1/2$. We will use $R$ to parametrize this half unit interval.
 
\begin{defin}\label{def: path_of_lagrangian_boundary}
	The family  $\{\sigma_\epsilon\}_{\epsilon \in [0,1)}$ also induces a map $\Phi: \del Z \times [0,1/2] \times I^k \to \Lag(T^*Q)$. 
\end{defin}
 Note this $\Phi$ can be extended with that in Equation (\ref{eqn: Lagrangian moving boundary for gluing}) to define a family of smooth maps on the whole unit interval.

\subsection{Moduli spaces with partial moving Lagrangian boundary condition}\label{subsec: Moduli space NN}
We have constructed two families of moduli spaces of pseudo holomorphic curves in $T^*Q$, the first being the glued curve of $\GG(\sigma, R)$, and the second being the family $\Disc(\sigma_\epsilon, q_0)$. We take the disjoint union of these two families and show that this gives a moduli space of J-holomorphic curves with partial Lagrangian moving boundary condition which gives the cobordism connecting
\[
\bigsqcup_{x\in \Chord(T_{q_0}^*Q, T_{q_1}^*Q)}\Cal{H}(x, q_0, q_1) \times \Disc(\sigma, x),
\]
to the moduli space with constant boundary condition $\Disc(\sigma_0, q_0)$.

\begin{defin}[Moduli space with partial moving Lagrangian boundary conditions]\label{def:NN}
	For a generic family $J_{\tau}$ and a Hamiltonian $H$ on $T^*Q$, the moduli space \textit{$\NN(\sigma,J_{\tau}, H)$}
 of pseudo-holomorphic curves with partial Lagrangian moving boundary condition is the disjoint union of tuples $(u, \tau, R)$ where $u$ is a map from $Z$ to $T^*Q$ that satisfies the following conditions:
	\begin{equation}
	\left \{
	\begin{aligned}
	& (du-X_H\otimes \gamma)^{0,1}=0, \text{for closed 1-form $\gamma \in \Omega(Z)$, such that $\gamma = dt$ when $s \to \pm\infty$  }\\
	& u(z) \in Q \text{ for all $R$ and if $z \in \R \times \{1\} \subseteq \del Z$ lies on the upper boundary of $Z$ },\\
	& u(z) \in T_{q_0}^*Q \text{ for all $R$ and if $z \in [1, \infty]\times \{0\} $ } ,\\
	& u(z) \in \Phi((s,0), \tau, R) \text{ if $z=(s,0) \in \del Z$ lies between $(0,0)$ and $(1,0)$},\\
	& u(z) \in T_{\sigma_R(\tau)(1)}^*Q \text{ if $z \in [-\infty,0]\times \{0\}$ , }\\
	& \lim_{s \to -\infty} u(s,\cdot) = \sigma_R(\tau)(1), \lim_{s\to \infty}u(s, \cdot) = q_0, \text{(asymptotic condition)}\\
	\end{aligned}
	\right \}
	\end{equation}
\end{defin}
To show that $\NN(\sigma, J_{\tau}, H)$ is a manifold with corners of the appropriate dimension for a generic family $J_{\tau}$, we first show that its interior $\NN^*(\sigma, J_{\tau}, H)$ is the zero set of a section of an infinite dimensional vector bundle whose differential is a Fredholm operator that is regular for generic $J_{\tau}$.
It is a standard argument similar to that in \cite[Chapter 3]{MS04}. However, we need to consider a different bundle from that discussed over there. We briefly describe the argument in the following part of this section.

First we define the base of this infinite dimensional vector bundle in the context of Definition \ref{def: path_of_lagrangian_boundary}.
\begin{defin}
	For a family of smooth maps  $\Phi: \del Z \times [0,1] \times I^k = \R \times \{0,1\}\times [0,1] \times I^k \to \Lag(T^*Q)$ and $\tau \in I^k$,  and any $k,p$ such that $kp>2$,  let $\BB^{k,p}_\Phi$ denote the space of tuples $(u, \tau, R)$  where $R \in [0,1]$, $\tau \in I^k$ and $u \in W^{k,p}(Z, T^*Q)$ such that
	$u(s, k) \in \Phi(s,k, R, \tau)$ where $k \in \{0,1\}$. 
	In other words, $\BB^{k,p}_\Phi$ is a family of maps from the Riemann Surface $Z$ to $T^*Q$ with boundary lying in the Lagrangians given by $\Phi(\cdot, R, \tau)$.
\end{defin}
We argue that the above space is a Banach bundle over unit interval $I$ parametrized by $R$. The fiber at each $R$ is $\BB_{\Phi_{R}}$, the space of pairs $(u, \tau)$ such that $u|_{\del Z} \in L_{\Phi_{R, \tau}}$. The tangent space of the fiber $\BB_{\Phi_R}$ over $R \in I$ at $(u,\tau, R) \in \BB_\Phi$ is the space:
\[
 T_{(u,\tau, R)} \BB_{\Phi_R} = W^{k,p}(Z\times I^k, u^*T(T^*Q) )
\]
of all families of smooth vector fields $\xi(z) \in T_{u(z)} T^*Q$ along $u$ parametrized by $I^k$.
 
The trivialization of the bundle over any small neighborhood of $(R, \tau)$ goes as follows: 

For any $\sigma \in C_k(\Omega_{q_0, q_1}Q)$, we may realize the path of Lagrangians from $T^*_{q_0}Q$ to $T^*_{q_1} Q$ as the result of a flow of Hamiltonians $W_\tau$ parametrized by $I^k$ such that for any $\tau \in I^k$, the Hamiltonian $W_\tau$ has linear bounds with respect to the distance given by a certain metric from the zero section outside of a compact set. In other words, for given $\sigma$, there exists large enough constants $r_0, C$ such that 
\begin{equation}
	|W_\tau(q, p)| \leq C\cdot |p| \text{ for $|p| \geq r_0$.}
\end{equation}
Thus the Lagrangian boundary condition $T^*_{q_s}Q = \tilde{\sigma}(\tau)(s)$ is just the time-s flow $\phi_{W_\tau}^s (T^*_{q_0}Q)$ of $T^*_{q_0}Q$ of this Hamiltonian $W_\tau$. 
Lemma \ref{lem: smoothness_in_s} and the definition of $\sigma_\epsilon$ shows that for each $\tau$,  the path of Lagrangians given by the extra parameter $R$,
\[
\Phi_\tau:\del Z \times (0,1/2) \to \Lag(T^*Q),
\] 
maps $((s,0), R)$ to $ \phi_{W_\tau}^{R\cdot \psi(s)} (T^*_{q_0}Q)$, where $\psi$ was defined in Definition \ref{defin:bump_function}. Therefore, we shall define the local trivialization map using a time-$\delta$ flow of $W_\tau$. Note that such a diffeomorphism is further parametrized by $s \in \R$, where $s$ denote the infinite coordinate of the strip $Z = \R \times I$.

\begin{defin}[local trivialization]\label{def: local trivialization}
	For any $R \in (0,1)$ and small enough $\delta$ such that $R+ \delta \in (0, 1)$, the local trivialization map $\phi_{R,\tau, s}(\delta)$ on the space of pseudo-holomorphic maps with Lagrangian moving boundary conditions is given by a time-$\delta$ flow of the Hamiltonian $W_\tau$: \label{eqn:local_trivialization}
\[
   \phi_{R,\tau, s}(\delta): \BB_{\Phi_{R, \tau}} \to \BB_{\Phi_{R+ \delta, \tau}}, u(s, t) \mapsto \phi_{W_\tau}^{\delta \cdot \psi(s)}(u(s,t)),
\]
where $\psi$ is the function defined in Definition \ref{defin:bump_function}.
\end{defin}

The following lemma guarantees the above definition is valid.
\begin{lem}
	$\phi_{R,\tau, s}(\delta)$ sends $\BB_{\Phi_{R, \tau}}$ to $\BB_{\Phi_{R+ \delta, \tau}}$.
\end{lem}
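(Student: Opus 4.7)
The plan is to verify segment-by-segment that the boundary condition $u|_{\partial Z}\in\Phi_{R,\tau}$ is transported to $v|_{\partial Z}\in\Phi_{R+\delta,\tau}$ under $v(s,t):=\phi_{W_\tau}^{\delta\psi(s)}(u(s,t))$. The single algebraic input is the one-parameter group law $\phi_{W_\tau}^a\circ\phi_{W_\tau}^b=\phi_{W_\tau}^{a+b}$ applied for each fixed $s$ (legitimate because $\delta\psi(s)$ is a constant time for each point of the domain, and $W_\tau$ is autonomous in the flow parameter).

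On the moving segment $\{(s,0):0\le s\le 1\}$, the hypothesis reads $u(s,0)\in\phi_{W_\tau}^{R\psi(s)}(T^*_{q_0}Q)$, and the group law immediately gives
\[
v(s,0)\in\phi_{W_\tau}^{\delta\psi(s)}\circ\phi_{W_\tau}^{R\psi(s)}(T^*_{q_0}Q)=\phi_{W_\tau}^{(R+\delta)\psi(s)}(T^*_{q_0}Q)=\Phi_{R+\delta,\tau}(s,0).
\]
On the ray $s\ge 1$, the extension $\psi(s)\equiv 0$ forces $\phi_{W_\tau}^{\delta\psi(s)}=\mathrm{id}$, so the constant fiber $T^*_{q_0}Q$ is preserved trivially; on the ray $s\le 0$, the extension $\psi(s)\equiv 1$ yields $\phi_{W_\tau}^{\delta}$, which by definition of $W_\tau$ carries $T^*_{q_R}Q$ to $T^*_{q_{R+\delta}}Q$, matching the constant boundary condition of $\BB_{\Phi_{R+\delta,\tau}}$ on that ray. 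These three checks exhaust the lower edge $t=0$.

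The subtler point—and the main obstacle—is the upper edge $t=1$, where the boundary condition $v(s,1)\in Q$ must persist even though the Lagrangian does not depend on $R$. To handle this I would arrange $W_\tau$ to be fiberwise linear in $p$, of the schematic form $W_\tau(q,p)=\langle p,\dot{\tilde\sigma}(\tau)(R)\rangle$ (with a suitable fiber cutoff far from the zero section so as to retain the linear growth bound stated above). Such a Hamiltonian vanishes identically on $Q$, so $X_{W_\tau}$ is horizontal along $Q$ and $\phi_{W_\tau}^r$ preserves $Q$ setwise for every $r$; consequently $v(s,1)=\phi_{W_\tau}^{\delta\psi(s)}(u(s,1))\in Q$. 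One also checks that this choice of $W_\tau$ does generate the prescribed isotopy $T^*_{q_0}Q\mapsto T^*_{q_r}Q$ of cotangent fibers, which is standard because a fiberwise linear Hamiltonian pairing $p$ with a time-dependent vector field on $Q$ produces exactly the lift to $T^*Q$ of the flow of that vector field.

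Finally, to land in $\BB^{k,p}_{\Phi_{R+\delta,\tau}}$ rather than merely the set-theoretic preimage, I would note that $\phi_{W_\tau}^{\delta\psi(s)}$ is a smooth family of diffeomorphisms of $T^*Q$ depending smoothly on $s$, so post-composition with this family preserves the $W^{k,p}$ class provided $kp>2$; this is standard mapping-space calculus and completes the proof plan.
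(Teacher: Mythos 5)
Your proposal is correct, and it is actually more careful than the paper's own proof, which verifies only the moving segment $s\in[0,1]$ at $t=0$ (the single chain $u(s,0)\in\phi_{W_\tau}^{R\psi(s)}(T^*_{q_0}Q) \mapsto \phi_{W_\tau}^{(R+\delta)\psi(s)}(T^*_{q_0}Q)$ via the group law) and says nothing about the other three boundary segments. You run the same group-law computation for the moving edge and then check the two constant rays on $t=0$ and the upper edge $t=1$ as well, which is the right thing to do since the trivialization in Definition 5.6 is applied to the whole map $u$, not just to $u|_{[0,1]\times\{0\}}$.

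Your observation about the upper edge is the genuinely substantive addition. The paper never says that $W_\tau$ can be chosen so that its flow preserves the zero section, yet without some such assumption the post-composed map need not satisfy $v(s,1)\in Q$, and the trivialization would fail to land in $\BB_{\Phi_{R+\delta,\tau}}$. Your choice of a fiberwise-linear Hamiltonian $W_\tau(q,p)=\langle p, V_\tau\rangle$ is the standard way to realize a path of cotangent fibers as a Hamiltonian isotopy: it vanishes on $Q$, so $dW_\tau|_Q$ is conormal and $X_{W_\tau}|_Q$ is tangent to $Q$, whence the flow preserves $Q$ setwise; and such $W_\tau$ already has the linear growth demanded by the paper because $|W_\tau(q,p)|\le |p|\cdot\|V_\tau\|_{C^0}$ with $Q$ compact. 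For this reason the fiber cutoff you parenthetically suggest is unnecessary and would in fact be counterproductive: truncating $W_\tau$ far from the zero section would freeze the flow outside a compact set, so the image of $T^*_{q_0}Q$ would no longer coincide with $T^*_{q_R}Q$ at large $|p|$, breaking the very boundary condition you are trying to preserve. Apart from that parenthetical, the proof is sound, the $W^{k,p}$ remark is standard, and the remaining two segment checks ($\psi\equiv 0$ on $s\ge1$ giving the identity, $\psi\equiv1$ on $s\le0$ giving $\phi_{W_\tau}^\delta$ shifting $T^*_{q_R}Q$ to $T^*_{q_{R+\delta}}Q$) are both correct.
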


\begin{proof}
	For any $u \in \BB_{\Phi_{R, \tau}}$, Definition \ref{def: sigma_epsilon} gives the boundary condition $u(s,0) \in \tilde{\sigma}(\tau)(R\cdot \psi(s))$, which, from the discussion above Definition \ref{def: local trivialization}, is just $\phi_{W_\tau}^{R\psi(s)}(T_{q_0}^*Q)$. We need to show that $\phi_{R,\tau, s}(\delta)$ maps the above path of Lagrangians into the path given by $\phi_{W_\tau}^{(R+\delta)\psi(s)}(T_{q_0}^*Q)$. 
	By Definition \ref{def: local trivialization}, the following equation holds for any  $u \in \BB_{\Phi_{R, \tau}}$
	\begin{align*}
	 	&\phi_{R,\tau, s}(\delta)(u(s,0)) = \phi_{W_\tau}^{\delta \cdot \psi(s)}(u(s,0)) \\
	 	&\in \phi_{W_\tau}^{\delta \cdot \psi(s)}\circ \phi_{W_\tau}^{R \cdot \psi(s)}(T^*_{q_0}Q) = \phi_{W_\tau}^{(\delta + R)\psi(s)} (T^*_{q_0}Q) = \Phi_{R+ \delta}(s).
	\end{align*}
	Thus $\phi_{R, s}(\delta)$ maps $\BB_{\Phi,R}$ into $\BB_{\Phi_{R+ \delta}}$. Since Hamiltonian flow is a diffeomorphism, this gives an identification of the above two spaces. 
\end{proof}
Therefore, $\phi_{R, \tau,S}$ gives a local trivialization of $\BB_{\Phi_{R,\tau}}$ over $(R-\delta, R+ \delta)$.
\begin{rem}
	$B_{\Phi}$ can further be realized as a vector bundle over $I^k \times I$ whose fiber consists of maps $u$ satisfying $u|_{\del Z} \in L_{\Phi_{R, \tau}}$.
\end{rem}





\subsection{A brief discussion of transversality}\label{sec: transversality argument brief discussion}

To show that $\NN^*(\sigma, J_{\tau})$, the interior of $\NN(\sigma, J_{\tau})$, is a manifold of the appropriate dimension for a generic family $J_{\tau}$, we need to show that it is the zero section of a Fredholm operator of an infinite dimension vector bundle.

Recall that in Section \ref{sec:chain_map}, we defined a Banach manifold $\JJ_{Z,k}(T^*Q)$. It denotes a choice of $k$-family of smooth almost complex structures of contact type outside of a compact set of $T^*Q$ (here we may regard $M^{in}$ as the unit cotangent bundle $S^*Q$.) that is also parametrized by $Z$ and such that for any $\tau \in I^{k}$, the corresponding $Z$-dependent almost complex structure
\[J_{\tau, z} \in \JJ(T^*Q)
\] only depends on $t$ when $s\to \pm \infty$ near the two infinite ends $\xi^1, \xi^2$ of $Z$ parametrized by the $s, t$ coordinates and that they agree. The choice of such a $k$ family follows from the construction given in Section \ref{sec:chain_map}; therefore, it also satisfies the regularity criterion.

Following the standard argument given in \cite[Section 3.2]{MS04} and that in \cite{Fl88}, we consider an infinite dimensional vector bundle $\EE$ over $\BB_\Phi \times \JJ_{Z,k}(T^*Q)$.
 The fiber of $\EE$ at $(u,\tau, R,J_{\tau})$ is the space
 \[
  \EE_{(u, \tau,R, J_\tau)} = W^{k-1, p}(Z\times I^{k+1}, \Lambda^{0,1}\otimes_{J_{\tau}} u^*T(T^*Q)),
\]
 a family parametrized by $\tau$ and $ R$ of $J_{\tau}$-antilinear 1-forms on $Z$ with values in $u^*T(T^*Q)$. 
\begin{lem}
	$\EE$  is a Banach manifold and infinite dimensional vector bundle over $\BB_\Phi \times \JJ_{Z,k}(T^*Q)$.
\end{lem}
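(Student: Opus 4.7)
The plan is to exhibit a local trivialization of $\EE$ around every point and verify that the transition maps are smooth; since the typical fiber is a Banach space and the base will be seen to be a Banach manifold, this simultaneously establishes that $\EE$ is itself a Banach manifold.

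First, I would show that the base $\BB_\Phi \times \JJ_{Z,k}(T^*Q)$ is a smooth Banach manifold. The factor $\JJ_{Z,k}(T^*Q)$ was already equipped with a Banach manifold structure in Section \ref{sec:chain_map}: a chart near $J$ is provided by the exponential construction $J' = J\exp(-JK)$ with $K$ ranging in the Banach space $C^\infty_{\vec{\epsilon}, M}(I^k\times Z, \operatorname{End}(T(T^*Q), J, \omega))$ equipped with the Floer norm (\ref{norm: Floer}). For the factor $\BB_\Phi$, I would combine the local trivialization $\phi_{R_0,\tau_0,s}(\delta)$ of Definition \ref{def: local trivialization} with an exponential chart on the mapping space. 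Namely, fix a Riemannian metric on $T^*Q$ and a reference point $(u_0, \tau_0, R_0) \in \BB_\Phi$; write a nearby point $(u, \tau, R)$ as $u = \exp_{\phi_{R, \tau_0, s}(R_0 - R)(u_0)}(\xi)$ for a vector field $\xi \in W^{k,p}(Z \times I^k, u_0^* T(T^*Q))$ that satisfies the linearized moving-boundary condition $\xi(s, i) \in T \Phi_{R_0, \tau_0}(s, i)$ for $i \in \{0,1\}$. The standing hypothesis $kp > 2$ guarantees continuity of elements of $W^{k,p}$ and that composition with smooth maps acts smoothly on Sobolev spaces.

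Having charts on the base, I would trivialize $\EE$ locally by parallel transport. Fix a complex-linear connection on $T(T^*Q)$ and, for $(u, \tau, R, J_\tau)$ in a chart as above, let $P_\xi$ denote parallel transport from $u(z)$ back to $u_0(z)$ along the geodesic $t \mapsto \exp_{u_0(z)}(t \xi(z))$. Composition with $P_\xi$ identifies sections of $u^*T(T^*Q)$ with those of $u_0^* T(T^*Q)$, and the dependence on $J_\tau$ in the antilinearity condition is absorbed by the tautological identification sending an arbitrary one-form $\alpha$ to its $J_{\tau_0}$-antilinear part (which is an isomorphism for $J_\tau$ close to $J_{\tau_0}$). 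This yields a local trivialization $\EE|_U \iso U \times \EE_{(u_0, \tau_0, R_0, J_{\tau_0})}$ over a neighborhood $U$ of $(u_0, \tau_0, R_0, J_{\tau_0})$ in the base.

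The main work, which I would carry out following the pattern of \cite[Appendix A]{MS04} and \cite{Fl88}, is verifying that these local trivializations and their transition functions are smooth maps between Banach manifolds. The crucial input is a Sobolev multiplication and composition lemma, valid because $kp > 2$, to see that $(u, \xi) \mapsto \exp_{u_0}(\xi)$ and $(u, \xi) \mapsto P_\xi$ are smooth, and that they interact smoothly with the $J_\tau$-dependent antilinear projection. The smoothness in the $J$-direction is the delicate point: it requires that the weights $\epsilon_l$ in (\ref{norm: Floer}) decay rapidly enough so that the inclusion of $C^\infty_{\vec{\epsilon}, M}$ into the relevant $W^{k-1, p}$-type space is a smooth map of Banach manifolds, which is exactly the technical content of \cite{Fl88}. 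The new parameters $R$ and $\tau$, together with the moving boundary condition $\Phi$, are handled by the Hamiltonian diffeomorphisms $\phi^{\delta \psi(s)}_{W_\tau}$ whose joint smoothness in $\delta, s, \tau$ follows from smooth dependence of ODEs on parameters applied to the generating Hamiltonian $W_\tau$ of the path of Lagrangians.
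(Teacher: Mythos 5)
Your proposal follows essentially the same route as the paper's sketch: exponential charts (via a fixed Riemannian metric) give coordinate charts on $\BB_\Phi$, parallel transport with respect to a $J$-preserving connection trivializes $\EE$ in the $\BB_\Phi$-direction, and the antilinear-projection isomorphism $\alpha \mapsto \tfrac{1}{2}(\alpha + J' \circ \alpha \circ j)$ handles the $\JJ_{Z,k}$-direction; the paper orders these two trivializations sequentially (first over $\NN(u,\tau,R)\times\{J_\tau\}$, then extending in $J$), while you combine them. Your write-up is somewhat more explicit about how the moving-boundary parameter $R$ is absorbed via the Hamiltonian-flow trivialization $\phi_{R,\tau,s}(\delta)$ (the paper simply cites the prior identification of $\BB_\Phi$ as a bundle over $I^{k+1}$, which encodes the same thing) and about the role of the Floer weights $\vec\epsilon$ in the $J$-direction — both reasonable elaborations of the same argument rather than a different one.
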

\begin{proof}[sketch of proof]
The Banach manifold structure of $\EE$ can be constructed as follows. Fix any smooth metric on $T^*Q$ to identify a neighborhood $\NN(u_{R, \tau})$ of an element $u_{R, \tau}: Z \to T^*Q$ in $\BB^{k,p}_{\Phi_{R,\tau}}$ with a neighborhood of zero in the Banach space $W^{k,p}(Z, u_{R,\tau}^*T(T^*Q))$ via $\xi \mapsto \exp_{u_{R, \tau}} (\xi)$. 
This gives coordinate charts on $\BB^{k,p}_{\Phi_{R, \tau}}$ with smooth transition maps. Note that we identified $\BB_\Phi$ as a vector bundle over $I^{k+1}$ with fiber $\BB^{k,p}_{\Phi_{R,\tau}}$ and base $I^{k+1}$ in the previous section, thus there is a coordinate chart $\NN(u, \tau, R)$ by taking the product of a local coordinate chart $\NN(\tau, R)$ with $\NN(u_{R, \tau}) $. We trivialize the bundle $\EE$ over such a coordinate chart by using a connection $\tilde{\nabla}$ that preserves the family of almost complex structure $J_{\tau}$. 
This constructs local trivializations over open neighborhoods $\NN(u, \tau, R)$ in $\BB_{\Phi}\times \{J_\tau\}$. To extend this over neighborhoods of the form $\NN(u, \tau, R) \times \NN(J_\tau)$ first trivialize over a neighborhood $\{(u, \tau, R)\} \times \NN(J_\tau)$ via the isomorphism:
\[
\Lambda^{0,1}\otimes _{J_{\tau}} u_R^*T(T^*Q) \to \Lambda^{0,1}\otimes_{J_{\tau'}}u_R^*T(T^*Q): \alpha \mapsto \frac{1}{2}(\alpha + J_{\tau'}\circ \alpha \circ j)  
\]
and then extend this trivialization over each slice $\NN(u,\tau, R)\times \NN(J_\tau)$ using parallel transport as before. 
Here $\NN(J_\tau)$ can be obtained by taking small perturbations in $End(T(T^*Q))$ satisfying the conditions in Section (\ref{sec:almost_complex_structure_in_general}) and then taking exponentials with respect to these perturbations around $J_\tau$.
\end{proof}
From now on we use $\UU$ to denote the map from $Z\times I^{k+1} \to T^*Q$, namely $\UU(z, \tau, R) = u(z)$ for $u \in \BB_{\Phi_{R,\tau}}$.

The complex antilinear part of $du$ defines a section $\sS:\BB_\Phi \to \EE$ of this vector bundle:
\[
\sS(\UU) = (\UU, \dbar_{J_{\tau}}(u)),
\]
which is a Fredholm operator as we explain below.
From the definition of $\NN(\sigma)$ above, we realize that its interior $\NN^*(\sigma)$ is just the zero set of this section $\sS$.
\[
\xymatrix{
	&&\EE \ar[d] \\
	& \NN(\sigma)\ar@{^{(}->}[r]&\BB_\Phi \ar^{p}[d] \ar@/^1pc/|-{\dbar}[u] \\
   &	&I
}
\]

In order to show that $\NN^*(\sigma)$ is a manifold of appropriate dimension, we must show that $\sS$ is transverse to the zero section. This means that the image of the differential $d\sS(\UU): T_{\UU}\BB_\Phi \to T_{((\UU), 0)}\EE$ is complementary to the tangent space $T_{\UU}\BB_\Phi$ of the zero section for every $\UU \in \NN(\sigma)$. Given $\UU$, we denote by  
\[
D_{\UU}:= W^{k,p}(Z \times I^{k+1}, u^*T(T^*Q) ) \to W^{k-1,p}(Z\times I^{k+1}, \Lambda^{0,1}\otimes u^*T(T^*Q))
\]
the composition of the differential $d\sS: T_{(u, \tau, R)}\BB \to T_{(u,\tau, R, 0)}\EE$ with the projection
 \[
\pi_u: T_{(\UU, 0)}\EE = T_{\UU}\BB_\Phi \oplus \EE_{\UU} \to \EE_{\UU}.
\]

The operator $D_{\UU}$ is called the vertical differential of the section $\sS$ of the section $\sS$ at $\UU$. The exact definition is as follows:

$D_{\UU}$ can be extended to general maps $Z \times I^{k+1}\to T^*Q$, regardless of $J_{\tau}-$holomorphic or not. However, it depends on a choice of splitting of the tangent space $T_{(\UU,\dbar_J(u))}\EE$ into horizontal and vertical subspaces. Such a splitting corresponds to a connection on $T(T^*Q)$. We assume the connection preserves the almost complex structure so that the fibers of $\EE$ are invariant under point wise parallel transport.  The connection 
\begin{equation}
\tilde{\nabla}_v X := \nabla_vX - \frac{1}{2} J_{\tau} (\nabla_vJ_{\tau})X
\end{equation}
induced by the connection $\nabla$ of the metric works.
Given $\xi \in W^{k,p}(Z \times I^{k+1}, u^*T(T^*Q))$, let 
\begin{equation}
\Psi_\UU(\xi): \UU^*T(T^*Q) \to \exp_\UU(\xi)^*T(T^*Q)
\end{equation}
denote the complex bundle isomorphism, given by parallel transport with respect to $\tilde{\nabla}$ along the geodesics $s \mapsto \exp_{\UU(z,\tau,R)}(s\xi_{z,\tau,R}).$ We define the map 
\begin{align}
&\F : W^{k,p}(Z\times I^{k+1}, u^*T(T^*Q))\to W^{k-1,p}(Z\times I^{k+1},\Lambda^{0,1}\otimes u^*T(T^*Q))\\
 &\F_{\UU}(\xi): = \Psi_\UU(\xi)^{-1}\dbar_{J_{\tau}}(\exp_\UU(\xi)).
 \end{align}
 For general map $\UU:Z\times I^{k+1} \to T^*Q$, $D_{\UU}$ is defined as
 \begin{equation}
 D_{\UU}\xi := D\F_u(0)\xi.
 \end{equation}
Similar to \cite[Proposition (3.1.1)]{MS04}, we see that
\begin{equation}\label{eq: explicit_form_of_Du}
	D_{\UU}\xi = \frac{1}{2}(\nabla \xi + J_{z, \tau} \nabla \xi \circ j_Z ) - \frac{1}{2} J_{z,\tau} (\nabla_\xi J)(\UU) \del_J(\UU),
\end{equation}
 where $j_Z$ acts on any tangent vector $\zeta = (\zeta_z, \zeta_\tau, \zeta_R) \in T_{(z,\tau, R)} (Z\times I^{k+1}) $ by $(j_Z (\zeta_z), \zeta_\tau,\zeta_R)$.
 The above equation implies that $D_{\UU}$ is a real linear Cauchy-Riemann operator and hence a Fredholm operator as explained in \cite[Appendix C]{MS04} or \cite[Section 8]{AbSe09}. For a generic infinitesimal perturbation $K_{\tau, z} \in End(TM, J_{\tau,z}, \omega)$, The resulting linearized operator at $J_{\tau, z}\exp(-J_{\tau,z}K_{\tau, z})$ with the expanded dense Banach subspace $\Cal{K}$ of infinitesimal perturbations given by
 \begin{align}
   \Cal{K}\oplus W^{k,p}(Z\times I^{k+1}, \UU^*T(^*Q)) \to W^{k-1, p}(Z\times I^{k+1}, \UU^*(T^*Q))\\
 	(K, \xi) \mapsto \frac{1}{2} K_{z,\UU(z,\tau,R)} \circ (d\UU - \xi \otimes \gamma) \circ j_Z + D_\UU \xi,
  \end{align}
  is also Fredholm.
 Thus we arrive at the following proposition:
 \begin{prop}
 	For a generic family of almost complex structure $J_{\tau}$, $\F$ is a Fredholm operator of index $ \mu(q_0) - \mu(q_1)+|\sigma| +1 $ and the interior $\NN^*(\sigma, J_{\tau})$ is a smooth manifold of same dimension. 
 \end{prop}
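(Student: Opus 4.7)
The Fredholm property of $\F$ is already established in the preceding discussion; what remains is to compute its index and to establish transversality, after which the implicit function theorem will equip $\NN^*(\sigma, J_\tau)$ with the structure of a smooth manifold of dimension equal to the index.

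\textbf{Index computation.} For fixed $(\tau, R) \in I^{k+1}$, the operator $D_\UU$ restricts to a real-linear Cauchy--Riemann operator on the strip $Z$ with totally real boundary conditions $TQ$ on $\R \times \{1\}$ and the moving path $T^*_{\Phi_{R,\tau}(s,0)} Q$ on $\R \times \{0\}$, with asymptotic operators determined by the Hamiltonian $H$ near the constant chords $q_0$ and $q_\epsilon = \sigma_R(\tau)(1)$. Since the lower boundary path is the Hamiltonian image of $T^*_{q_0} Q$ under the flow $\phi_{W_\tau}^{R\psi(s)}$ constructed in Section \ref{subsec: Moduli space NN}, a standard spectral-flow / Riemann--Roch calculation (cf.\ \cite{AbSe09, seidel-book}) identifies
\[
\mathrm{ind}\, D_\UU|_{(\tau,R)} = \mu(q_0) - \mu(q_1),
\]
using that the grading $\mu$ is preserved under the Hamiltonian deformation from $q_0$ to $q_\epsilon$. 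Allowing $(\tau,R)$ to vary over $I^{k+1}$ contributes $k+1 = |\sigma|+1$ trivial kernel directions, so the total index is $\mu(q_0)-\mu(q_1)+|\sigma|+1$ as claimed.

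\textbf{Transversality.} I would apply the Sard--Smale theorem to the universal moduli space $\NN^{\mathrm{univ}}(\sigma) := \sS^{-1}(0)$ sitting inside $\BB_\Phi$ times the Banach subspace of $J$-perturbations of finite $\vec\epsilon$-norm from \eqref{norm: Floer}. The key step is showing that the universal linearization
\[
(\xi, K) \longmapsto D_\UU \xi + \tfrac{1}{2} K_{z, \UU(z,\tau,R)} \circ (d\UU - X_H \otimes \gamma) \circ j_Z
\]
is surjective at every solution. Given $\eta$ in the $L^q$-dual cokernel, pairing against $\xi$ alone gives $D_\UU^* \eta = 0$, while pairing against $K$ alone forces $\eta$ to vanish pointwise on the open set where $d\UU - X_H \otimes \gamma$ has full rank and $u$ is locally injective. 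Unique continuation for $D_\UU^*$ then yields $\eta \equiv 0$, as in \cite{FHS, Fl88}. The projection $\NN^{\mathrm{univ}}(\sigma) \to \JJ$ is then a Fredholm map between Banach manifolds, and Sard--Smale produces a comeager set of regular $J_\tau$.

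\textbf{Main obstacle.} The delicate point is verifying the somewhere-injectivity needed for the transversality argument, since the standard proof requires an interior point $z_0$ at which $du(z_0) - X_H \otimes \gamma \neq 0$ and $u^{-1}(u(z_0))$ is finite. Exactness of $T^*Q$ rules out holomorphic bubbling, so the maps in question are genuine strips. For $R > 0$ the $s$-dependent lower boundary condition $T^*_{\sigma_\epsilon(\tau)(s)} Q$ forces $\pi \circ u: Z \to Q$ to be non-constant, so $u$ cannot be confined to a single cotangent fiber, and a standard argument adapted from \cite{FHS} produces the required injective points. The locus $R = 0$ must be handled separately: the boundary condition degenerates to the constant fiber $T^*_{q_0} Q$, the moduli space collapses tautologically to a copy of $I^k$ consisting of constant-section-like solutions, and regularity there follows by direct inspection. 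Gluing these two regimes together gives the smooth manifold structure on $\NN^*(\sigma, J_\tau)$ of the asserted dimension.
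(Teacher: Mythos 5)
Your proposal is correct and follows essentially the same route the paper invokes: the Fredholm and transversality theory of \cite{MS04}, \cite{Fl88}, and \cite{FHS}, applied to the Banach bundle $\EE \to \BB_\Phi \times \JJ_{Z,k}(T^*Q)$ with the vertical differential $D_\UU$ already computed in the preceding discussion. The paper itself does not spell out the index computation or the Sard--Smale/unique-continuation step, delegating both to the cited references, so your contribution is a correct unpacking of that citation rather than a different argument; the one mild overreach is the amount of attention devoted to the $R=0$ stratum, which lies on the boundary of $\NN(\sigma,J_\tau)$ and is therefore outside the interior $\NN^*(\sigma,J_\tau)$ that the proposition actually concerns.
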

 
 \begin{rem}
 Transversality would come from the surjectivity of $D_{\UU}$ for every $\UU \in \NN^*(\sigma)$ for a given regular $J_{\tau}$ as a $W^{k,p}$-neighborhood of zero in $\F_\UU^{-1}(0)$ is diffeomorphic to a $W^{k,p}$-neighborhood of $(u,\tau,R)$ via the map $\xi \mapsto \exp_\UU(\xi)$ and that $\F_\UU^{-1}(0)$ intersects a small neighborhood of zero in a smooth submanifold due to the surjectivity of $D_{\UU}$ and standard Sard-Smale argument.   The details of this argument is similar to the proof of transversality shown in \cite[Chapter 3]{MS04}.
 
  \end{rem}



%
%
%
%


\subsection{Homotopy equivalence}
%

To show the homotopy equivalence of the identity functor $\mathit{Id}$ and $f\circ F\circ \eta$ on $C_{*}(\Omega_{q_0, q_1}Q)$, we construct maps from the cubical chains of maps in $Q$ to itself:
\begin{equation}\label{eqn: homotopy_map}
	Hp: C_{*}(\Omega_{q_0, q_1}Q) \to C_{*+1}(\Omega_{(q_0, q_1)}Q),
\end{equation}
induced by  $\NN(\sigma)$ such that $ (f\circ F\circ \eta - \mathit{Id})_{*} = d \circ Hp + Hp \circ d$ in 
$ C_{*}(\Omega_{q_0, q_1}Q)$.

\begin{rem}
The geometric idea is that for every path in $\sigma$, concatenating $\Disc(\sigma_\epsilon)$ with the remainder of the path after the $\epsilon$-th part, we can homotope to the original $(f\circ F\circ \eta)_*(\sigma)$ that is the set $\bigcup_{x \in \Chord(T^*_{q_0},T^*_{q_1} )}\Disc(\sigma, x) \times \Cal{H}(x, q_0, q_1)$, represented by the first picture in Figure (\ref{fig: Gluing of two moduli spaces}).
\end{rem}

From the previous subsection and Sard-Smale argument we know that $\NN^*(\sigma, J_{\tau})$ is a smooth manifold of dimension $|\sigma| + \mu(q_0) - \mu(q_1) + 1$ for a generic family of almost complex structures $J_{\tau}$.
We list the boundary components of its compactification $\NN(\sigma, J_\tau)$ below.
 For any $\sigma' \subseteq \del \sigma$, we obtain a map:
\begin{equation}\label{eqn:boundary_from_sigma_itself}
	\NN(\sigma', J_{\tau}) \to \NN(\sigma, J_{\tau}).
\end{equation}

 Also, by gluing a half disc with output $x$ and input $\sigma$ to a half disc with input $x$, we get the map
\begin{equation}\label{eqn:gluing_end_of_NN}
\Disc(\sigma, x) \times \Cal{H}(q_0, x, q_1) \to \NN(\sigma,J_{\tau}).
\end{equation} 
Maps with the above form for all possible $x \in \Chord(T^*_{q_0}Q, T^*_{q_1}Q)$ is identified with the inverse image of the boundary at $R = 1$ of the unit interval $[0,1]$ that gives the $1$-parameter family of holomorphic maps in the definition of $\NN(\sigma, J_{\tau})$.

Similarly, taking the inverse image of the boundary at $R =0$, we obtain the following map:
\begin{equation}\label{eqn:identity_end_of_NN}
	 \Disc(Id_{q_0}, q_0)  \to \NN(\sigma, J_{\tau}).
\end{equation}

\begin{prop}\label{prop: boundary_components_of_NN}
	
	Fixing the Floer data and a generic choice of family of almost complex structures $J_{\tau}$, $\NN(\sigma, J_{\tau})$ is a smooth manifold with corners and its codimension $1$ boundary components consist of the images of the inclusion maps given by equation (\ref{eqn:boundary_from_sigma_itself}), (\ref{eqn:gluing_end_of_NN}), (\ref{eqn:identity_end_of_NN}), namely
	
	\begin{align}\label{eqn:boundary_components_of_homotopy}
	&\coprod_{x} \Disc(\sigma, x) \times \Cal{H}(q_0, x, q_1),  \\
	&\Disc(\sigma_0, q_0) = \Disc(\mathit{Id}_{q_0}, q_0), \\
	&\NN( \sigma') \text{ for $\sigma' \subseteq \del \sigma$}.
	\end{align}	\qed
\end{prop}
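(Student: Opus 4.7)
The plan is to prove this by treating the three potential sources of boundary separately, then ruling out any other degenerations by exactness and the $C^0$ bound on $T^*Q$ from Appendix \ref{Appendix: C^0_bound}. Smoothness of the interior $\NN^*(\sigma, J_\tau)$ has already been established via the Sard--Smale/Fredholm argument of Section \ref{sec: transversality argument brief discussion}, so what remains is to identify the codimension $1$ strata of the Gromov compactification and check they are collared.

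First I would deal with compactness. Because the Lagrangian boundary conditions $\Phi(\cdot,\tau,R)$ all lie in exact Lagrangians (a varying family of Hamiltonian-isotopic cotangent fibres together with the zero section $Q$), there is no bubbling of holomorphic discs or spheres, and the primitives $f_L$ can be chosen continuously in $\tau, R$ so energy is uniformly bounded over $I^k \times [0,1]$. Together with the $C^0$ bound obtained from the quadratic Hamiltonian $H$ (which forces pseudo-holomorphic maps to stay in a fixed compact set of $T^*Q$), Gromov compactness reduces the possible limits to strip breaking of maps satisfying Floer's equation with the prescribed boundary conditions, together with limits of $(\tau, R)$ to the boundary of $I^k \times [0,1]$.

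Next I would analyze each source of boundary. The cubical-chain boundary $\sigma' \subseteq \partial \sigma$ contributes $\NN(\sigma', J_\tau)$ by restriction of the parametrisation, giving \eqref{eqn:boundary_from_sigma_itself}. At $R = 0$, Definition \ref{def: sigma_epsilon} makes $\sigma_0$ the constant path at $q_0$, and $\Phi(\cdot,\tau,0)$ is everywhere equal to $T^*_{q_0}Q$, so the fibre is precisely $\Disc(\mathit{Id}_{q_0}, q_0)$, as in \eqref{eqn:identity_end_of_NN}. At $R = 1$ the parametrisation $\nu$ of Section 5.1 pushes the gluing length to infinity; by the standard gluing theorem (as in \cite{FOOO}, adapted in Appendix to our setting), a neighbourhood of the $R=1$ stratum is diffeomorphic to $(R_0, 1]$ times the union \eqref{eqn:isomorphism of gluing when R is infty}, and the boundary limit is the disjoint union $\coprod_x \Disc(\sigma, x) \times \mathcal{H}(q_0, x, q_1)$. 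Transversality of the factors $\Disc(\sigma, x)$ and $\mathcal{H}(q_0, x, q_1)$ for generic Floer data makes this a transverse intersection and the gluing map a local diffeomorphism.

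Finally, I would rule out all other potential codimension $1$ strata. Strip breaking at the negative end (where $u \to \sigma_R(\tau)(1)$) cannot produce a non-trivial stratum because the asymptote is constant in $t$ and the configuration involves no non-trivial Hamiltonian chord there; any broken piece would be a pseudo-holomorphic disc in $T^*Q$ with boundary on a single cotangent fibre, which must be constant by exactness and the Maslov index calculation. Strip breaking at the positive end similarly fails to produce a stratum for $R < 1$ because the boundary conditions do not yet admit an intermediate Hamiltonian chord; this only becomes possible in the gluing limit $R \to 1$, exactly reproducing stratum \eqref{eqn:gluing_end_of_NN}. The main obstacle I anticipate is the gluing construction at $R = 1$: one must show that the family interpolating between $\GG(\sigma, R)$ for $R$ near $1$ and the broken configuration at $R = 1$ is smooth, including the family dependence on $\tau \in I^k$. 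This is handled by the usual pregluing/Newton iteration, now carried out in families over $I^k$, using the fact that our choice of $\Phi$ in \eqref{eqn: Lagrangian moving boundary for gluing} is locally constant in $R$ near $R = 1/2$ and hence compatible with the pregluing chart. With this in place, the three strata listed in \eqref{eqn:boundary_components_of_homotopy} exhaust the codimension $1$ boundary of $\NN(\sigma, J_\tau)$ and equip it with the structure of a smooth manifold with corners.
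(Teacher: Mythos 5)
The paper does not supply a proof of this proposition --- it is stated with a \qed, and the only justification is the preceding discussion (the three maps \eqref{eqn:boundary_from_sigma_itself}--\eqref{eqn:identity_end_of_NN} and the surrounding paragraphs, plus the transversality argument of Section \ref{sec: transversality argument brief discussion}). Your proposal fills in exactly the reasoning the paper leaves implicit and follows the same decomposition (smoothness from Sard--Smale, codimension $1$ strata from cubical boundary, $R=0$, and $R=1$, with $C^0$ bound and exactness ruling out escape and bubbling), so I regard it as the intended argument made explicit.

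Two points worth tightening. First, you say the local constancy of $\Phi$ near $R=1/2$ is what makes the pregluing chart at $R=1$ work, but these are two different jobs: the near-$1/2$ constancy ensures the $\sigma_\epsilon$-family ($R\in[0,1/2]$) matches smoothly with the glued-domain family $\GG(\sigma,\nu(R))$ ($R\in[1/2,1]$), while the chart at $R=1$ is the standard Floer gluing at the neck of $S_+\#_{\nu(R)}S_-$, carried out fibrewise over $\tau\in I^k$. Both are needed, but they are not the same statement. Second, your ``no intermediate Hamiltonian chord'' argument is the right idea stated loosely; the precise version is that $c(s)$ vanishes outside $[-1,2]$, so both ends of $Z$ carry $H\equiv 0$ and asymptote to transverse intersection points of exact Lagrangians --- any broken strip there has zero topological energy and is constant --- and for $R<1$ the conformal structure of $Z$ is fixed, so domain degeneration only occurs in the $R\to 1$ limit of the glued family. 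That is why the codimension $1$ list closes up; corners then arise from simultaneous occurrences (e.g.\ $R\in\{0,1\}$ together with $\sigma'\subseteq\del\sigma$, or multiple breakings at $R=1$), which neither you nor the paper spells out but which pose no new difficulty.
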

Consider the family of evaluation map:
\begin{equation}
	ev_{\tau, R}: \NN(\sigma)_{\tau, R} \to \Omega(q_0, q_1),
\end{equation}
 which for every $R, \tau$ takes every pseudo-holomorphic strip $u:Z \to T^*Q$ to the path along $Q$ between $q_0$ and $q_R$ obtained by restricting to the component of $\del Z$ that stays within $Q$
  and the concatenation 
  with the remainder of the original path from $q_R$ to $q_1$ that is given by $\sigma$. 
As a whole this gives a map:
\[
ev: \NN(\sigma) \to C^\infty(I^{k+1},\Omega(q_0, q_1))
\]
parametrized by $R,\tau$ and induce a map $ev_*:  C_{*}(\NN(\sigma) ) \to C_{*+k+1}(\Omega_{(q_0, q_1)}Q)$.

The map in equation (\ref{eqn:gluing_end_of_NN}) gives a commutative diagram:
\begin{equation}
\xymatrix{
	&\Disc(\sigma, x) \times \Cal{H}(q_0, x, q_1) \ar[r] \ar^{ev}[d] & \NN(\sigma) \ar^{ev}[d] \\
	& C_{k}(\Omega_{(q_0, q_1)}Q) \ar[r] &C_{k+1}(\Omega_{(q_0, q_1)}Q),	
}
\end{equation}
where the above horizontal map is the inclusion of a boundary stratum and the left vertical map is projection onto the second factor and the remaining two maps are evaluations.

Similarly, the map in equation (\ref{eqn:identity_end_of_NN}) gives the following commutative diagram:
\begin{equation}
\xymatrix{
	&  \Disc(Id_{q_0}, q_0)  \ar[r] \ar^{ev}[d] & \NN(\sigma) \ar^{ev}[d] \\
	&   C_{k}(\Omega_{(q_0, q_1)}Q)  \ar[r] & C_{k+1}(\Omega_{(q_0, q_1)}Q) ,	
}
\end{equation}
where the first horizontal map is the inclusion of the boundary stratum $\Disc(Id_{q_0}, q_0)$; the left vertical map is also the evaluation map on the component of the boundary of $Z$ that stays inside $Q$; the bottom horizontal map is concatenation with the path given by $\sigma(\tau)$. 

The following commutative diagram is obtained by equation (\ref{eqn:boundary_from_sigma_itself}) in a similar fashion:
\begin{equation}
	\xymatrix{
		&  \NN(\sigma')  \ar[r] \ar^{ev}[d] & \NN(\sigma) \ar^{ev}[d] \\
		&   C_{k}(\Omega_{(q_0, q_1)}Q)  \ar[r] & C_{k+1}(\Omega_{(q_0, q_1)}Q).	
	}
\end{equation}

Notice that $\NN(\sigma, J_{\tau})$ is by definition a family of moduli spaces parametrized by $R \in [0,1]$. Taking $R = 1$ we get the boundary strata represented by Equation (\ref{eqn:gluing_end_of_NN}), given by standard gluing techniques; when $R = 0$, this is the boundary given by equation (\ref{eqn:identity_end_of_NN}); we also incorporate the boundary component that arise from the boundary chains $\del \sigma$ of chains $\sigma$ as in Equation (\ref{eqn:boundary_from_sigma_itself}). Thus the following proposition.

\begin{lem}\label{lemma: homotopy_by_evaluation}
	There exist fundamental chains $[\NN(\sigma)] \in C_{*}(\NN(\sigma))$ such that the assignment induced by the evaluation map
	\[
	Hp(\sigma) = ev_*([\NN(\sigma)])
	\]
	defines the homotopy described in equation (\ref{eqn: homotopy_map}).
\end{lem}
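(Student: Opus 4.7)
The plan is to construct the fundamental chains $[\NN(\sigma)]$ inductively on the degree of $\sigma$, mirroring the construction of the fundamental chains $[\Discbar(x;\sigma)]$ used in the proof of Lemma \ref{lem:chain_map}. Having established in Proposition \ref{prop: boundary_components_of_NN} that $\NN(\sigma)$ is a compact manifold with corners whose codimension-$1$ boundary is covered by the three families of strata listed in \eqref{eqn:boundary_components_of_homotopy}, I would argue by induction on $|\sigma|$: assuming the fundamental chains $[\NN(\sigma')]$ have been chosen for all $\sigma' \subseteq \del\sigma$ (together with the already fixed fundamental chains of the moduli spaces $\Disc(\sigma, x) \times \Cal{H}(q_0, x, q_1)$ built out of the lower-dimensional chains of Lemma \ref{lem:higher_homotopy_fundamental_chains} and the moduli spaces appearing in \cite{Ab12}), the sum
\begin{equation}
\sum_{x} [\Disc(\sigma, x)]\times [\Cal{H}(q_0, x, q_1)] + [\Disc(\mathit{Id}_{q_0}, q_0)] + \sum_{\sigma'\subseteq \del\sigma}(-1)^{\ddagger}[\NN(\sigma')]
\end{equation}
is a cycle in $C_*(\del \NN(\sigma))$ representing the fundamental class of the boundary; since $\NN(\sigma)$ is a compact manifold with corners, this cycle bounds, and we define $[\NN(\sigma)]$ to be any such bounding chain. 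The signs $\ddagger$ are fixed by the same Koszul bookkeeping as in Lemma \ref{lem:higher_homotopy_fundamental_chains}, using the orientation conventions of \cite[Appendix A]{Ab12}.

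Next I would verify that $Hp(\sigma) := ev_*([\NN(\sigma)])$ is the required homotopy by computing $\del Hp(\sigma) = ev_*(\del[\NN(\sigma)])$ and comparing each boundary contribution under the three commutative diagrams established just before the lemma. Specifically, the stratum $\coprod_x \Disc(\sigma,x)\times \Cal{H}(q_0,x,q_1)$ maps under $ev$, by the first commutative square, to $(f\circ \F\circ \eta)(\sigma)$, since the evaluation there factors through the projection onto $\Cal{H}$ and the chain-level formulas for $\F^1$ (Lemma \ref{lem:chain_map}) and for $f^1$ of \cite[Section 4]{Ab12}. The stratum $\Disc(\mathit{Id}_{q_0}, q_0)$ is, by construction of $\sigma_\epsilon$ at $\epsilon=0$, a copy of $I^{|\sigma|}$ whose evaluation under $ev$ is exactly the original chain $\sigma$, giving the $\mathit{Id}(\sigma)$ term with the correct sign. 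Finally, the stratum $\NN(\sigma')$ under $ev$ yields $Hp(d\sigma)$ by the third commutative square. Combining these three contributions and matching the signs produces precisely the identity $(f\circ \F\circ \eta - \mathit{Id})(\sigma) = d\circ Hp(\sigma) + Hp\circ d(\sigma)$.

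The main obstacle, and the reason I defer it to the inductive construction, is not the identification of the three pieces but rather the verification of compactness and transversality for the family $\NN(\sigma, J_\tau)$ uniformly in the parameter $R\in[0,1]$, together with the consistent choice of Floer data required so that the gluing isomorphism \eqref{eqn:isomorphism of gluing when R is infty} produces the correct product orientation at $R=1$ and the identification at $R=0$ with $\Disc(\mathit{Id}_{q_0}, q_0)$ is smooth across the degenerating endpoint. The $C^0$-estimates (from Appendix \ref{Appendix: C^0_bound}) together with the quadratic Hamiltonian ensure no escape to infinity along the family, and the transversality argument from Section \ref{sec: transversality argument brief discussion}, applied to the Banach bundle $\EE \to \BB_\Phi\times\JJ_{Z,k}(T^*Q)$, yields a generic $J_\tau$ for which $\NN^*(\sigma, J_\tau)$ is a smooth manifold of the expected dimension; the gluing theorem cited after \eqref{eqn:isomorphism of gluing when R is infty} then closes up these pieces into the stated compact manifold with corners, and the inductive construction of bounding chains proceeds exactly as in Lemma \ref{lem:higher_homotopy_fundamental_chains}.
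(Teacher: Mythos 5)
Your proposal follows essentially the same strategy as the paper's proof: inductively constructing fundamental chains $[\NN(\sigma)]$ whose boundary realizes the cycle built from the three codimension-$1$ strata of Proposition \ref{prop: boundary_components_of_NN}, then pushing forward by $ev$ and matching each boundary contribution with the corresponding term of the homotopy identity $(f\circ \F\circ \eta - \mathit{Id}) = \del\circ Hp + Hp\circ\del$. The additional remarks you make about uniform compactness, transversality, and gluing across $R\in[0,1]$ are consistent with the paper's reliance on Section \ref{sec: transversality argument brief discussion} and Appendix \ref{Appendix: C^0_bound}, which the paper's proof also implicitly invokes.
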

\begin{proof}
The strategy is to build up from chains of lower dimensions, starting with the fundamental chains $[\NN( \sigma')]$ for $\sigma' \subseteq \del \sigma$.
Let us consider moduli spaces $\NN(\sigma)$ whose boundary only has codimension $1$ strata, which must therefore be products of closed manifolds. By taking the product of the fundamental chains of the factors in each boundary stratum, we obtain a chain in $C_*(\NN(\sigma))$ with appropriate signs as before:
\begin{equation}\label{eqn:boudary_cycles_of_homotopy}
	\sum_{x \in \Chord(T_{q_0}^*Q, T_{q_1}^*Q)} (-1)^{\dagger} [\Disc(\sigma, x)] \times [\Cal{H}(q_0, x, q_1)]\\
	 + \sum_{\sigma' \subseteq \del \sigma}[\NN( \sigma')] + [\Disc(\sigma_0, q_0)].
\end{equation}

Lemma (\ref{lemma: homotopy_by_evaluation}) implies that the above equation is a cycle. We now define the fundamental chain 
\[  [\NN(\sigma)]
\]
to be any chain whose boundary is equation (\ref{eqn:boudary_cycles_of_homotopy}).

By definition of homotopy on chains, we realize that up to signs which we shall ignore, 
   proving Hp gives a homotopy on $C_*(\Omega_{(q_0, q_1)}Q)$ between $\mathit{Id}$ and $f\circ F\circ \eta$, is equivalent to showing that $(f\circ F\circ \eta - \mathit{Id})(\sigma) = \del \circ Hp(\sigma) + Hp(\del \sigma) \in C_*(\Omega_{(q_0, q_1)}Q)$. 
   
    We note $\del \circ Hp(\sigma)$ consists of the image of the boundary components listed in equation (\ref{eqn:boundary_components_of_homotopy}), the first of which corresponds to $f\circ F\circ \eta$, the second corresponds to $\mathit{Id}_*$ and the third corresponds to $\sum_{\sigma' \subset \del \sigma} ev_*([\NN( \sigma')]) = Hp(\del \sigma)$.
\end{proof}

\section{Appendix: $C^0$-estimates} \label{Appendix: C^0_bound}



In this section we denote by $M^{in}$ a Liouville domain with contact boundary and $M$ its completion as in Equation \eqref{eq:completion} , namely $M = M^{in} \cup \del M^{in} \times [1, \infty)$ with a collar neighborhood of $\del M^{in}$ inside $M^{in}$ modeled by $\del M^{in} \times (0,1]$. Let us use $r$ to denote the radial coordinate of the completion and use $\alpha = \theta|_{\del M^{in}}$ to denote the contact form on $\del M^{in}$.

In order to show that our moduli space of pseudo-holomorphic curves inside $M$ is compact, we need to prove Gromov compactness by restricting to a compact subset of $M$, which comes from a $C^0$ bound on $\rho=r\circ u$. Here $u$ is the pseudo-holomorphic curve from  $\D \setminus \{\xi_1, \xi_2\}$ to $M$, where $\D$ is the unit disc. We denote this domain of $u$ by $\Sigma$ in this section, and it's easy to see that this is a strip
\[
\Sigma \simeq \R \times [0,1].
\]
Let $z = (s, t)$ denote the coordinate for $\R\times I$.
 
A $C^0$-bound is achieved if $\rho$ is constraint within a compact subset of in $M$.
Suppose the moduli space of pseudo-holomorphic strip has boundary conditions given by a family of moving Lagrangians $L_s$ for $s \in [0,1]$ in the lower boundary of $\Sigma$; this family of Lagrangians in $M$ has Legendrian boundaries, namely the infinite ends of each $L_s$ is a product of Legendrian submanifolds of $\del M^{in}$ and an open interval:
 \[
 L_s \cap \del M^{in} \times (0,\infty) =\Lambda_s \times (0,\infty),
 \]
  near $\del M^{in}$, where $\Lambda_s$ is a family of Legendrian submanifolds generated by a Hamiltonian $H_0^s$ parametrized by $s$ on $\del M^{in}$. In later discussions, we just write $H_0$ if there is no confusion. Thus there exists a vector field $X_{H_0}$ in $\del M^{in}$ such that
   \[
  i_{X_{H_0}} d\alpha=-dH_0,
  \]
  and $\Lambda_s$ is the time-$s$ flow of $X_{H_0}$ in $\del M^{in}$. A calculation similar to that of exercise 3.48 in \cite{MS95} gives 
  \begin{equation} \label{eqn:$X_{H_0}$}
  X_{H_0}=H_0\cdot Y+ Z,
 \end{equation}
  where $Y$ is the Reeb vector field on $\del M^{in}$ and $Z$ lies in the contact distribution $\kappa =\ker(\alpha)$ such that 
  \[
  i_Z d \alpha|_{\kappa} = dH_0|_{\kappa}. 
  \]
Now consider a Hamiltonian $H^l$ on $M$ that is linear with respect to the radius on cylindrical ends $\del M^{in}\times (1,\infty)$,  given by $H^l (q,r): =r\cdot  H_0(q)$ for $q \in \del M^{in}$. This gives rise to a vector field on the conical end 
\begin{equation}\label{eqn:$X_{H^l}$}
X_{H^l}(q,r) = (X_{H_0}(q), -r dH_0(Y)\partial_r) \text{ when $r \gg 0$.}
\end{equation}
\begin{rem}
	This Hamiltonian $H^l$ is the one denoted by $W$ in Section \eqref{subsec: Moduli space NN}.
\end{rem}
As for the properties of the Hamiltonian that enters in the definition of the wrapped Fukaya category, we consider functions $H$ on $M$ parametrized by $\Sigma$ such that:
\begin{align} \label{eq:class-of-h}
& \parbox{30em}{$H$ is positive everywhere. Moreover, $H (r,q) = c(s)\cdot r^2$ on the infinite cone for some smooth $c:\R \to \R_+$.} 
\end{align}
Let $X_H$ be the Hamiltonian vector field of $H$. On the infinite cone it is a multiple of the Reeb vector field:
\[
X_H = 2c(s) r\cdot (0,Y),
\]
 where $Y$ is the Reeb vector field associated to the contact form $\alpha = \theta|\partial M^{in}$ and the first coordinate corresponds to the component in the direction of $\del_r$. By restriction on $\del M^{in}$, which corresponds to $r = 1$, one sees that $X_H$ satisfies $X_H|_{\partial M^{in} }=c(1)Y$.

On both infinite ends of the domain $\Sigma$ there are asymptotic Hamiltonian chords $x_0$ and $x_1$ and the curves satisfy the in-homogeneous Cauchy Riemann equation (\ref{eq:CauchyRiemannEq}), which we write here for the reader's convenience:
\begin{align}\label{eq:condition_on_u}
& \parbox{30em}{The map $u: \Sigma \to M$ satisfies $(du-X_H\otimes \gamma)^{0,1}=0,$ where $u(s,1)$ lies in a fixed Lagrangian $K$, $ u(s,0) \in L_0$ when $s\gg0$, $u(s,0) \in L_1$ when $s\ll0$ and $u(s,t) \in L_s $ for $s\in[0,1]$ and $\gamma$ is a closed $1$-form on $\Sigma$ such that $\gamma = dt$ near both infinite ends and that $\gamma = dt$ in a small neighborhood of $[0,1]\times \{ 0\}$. }
\end{align} 
Namely we restrict the moving boundary Lagrangian condition to an interval on the lower boundary of $\Sigma$.

\begin{figure}\label{fig:Maximum_principle_for_glued_disc}
	\centering
	\begin{tikzpicture}
	\newcommand*{\tinyradius}{.03}
	\begin{scope}[shift = {(8, -.5)}]

	\draw[line width = 0.4] (3,1.5) to (-3,1.5);
	\coordinate[label=above: $K$]() at (0,1.5);
	\draw[line width =0.4] (3,1.5) to (-3,1.5);
	\draw [->] (3.4,.5) to (2.9,.5);
	\draw[->](-2.5,.5) to (-3,.5);
	\coordinate[label=above:\small{$(du-X_H\otimes \gamma)^{0,1} = 0$ }]() at (0,0.5);
	
	\draw[line width = 0.4] (3,0) to (-3, 0);
	\coordinate[label=below: \tiny{$L_0$}]() at (1,0);	
	\draw[fill=blue](.5, 0) circle (\tinyradius);
	\draw [red, line width=.6](.5,0) to (-.5,0);
	\coordinate[label=below: \tiny{$L_{\sigma(\tau)(s)}$}] () at (0,0);
	\draw[fill=blue](-.5, 0) circle (\tinyradius);
	\coordinate[label=below:\tiny{$L_1$}]() at (-1,0);
	
	\end{scope}

	\draw [->, line width=1] (4.5,0) to (3,0);
	\draw (-2, 1) circle (.05);
	\draw(-2, -.5) circle (.05);
	\draw (-2,1) to (-2,-.5);
	\draw (-2,1) to (1,1);
	\draw(-2,-.5) to (1,-.5);
	\draw [red](1,1)[out=(0:2), in =(0:2)] to (1,-.5);
	\coordinate[label=left:$K$]() at (-2,0);
	\coordinate [label=right:$L_{\sigma(\tau)}$]() at (2.5,0.3);
	\draw [->] (0,-.3) to (0,.6) node[left]{$x_H$};
	\end{tikzpicture}
	\caption{Maximum Principle for glued disc}
\end{figure}
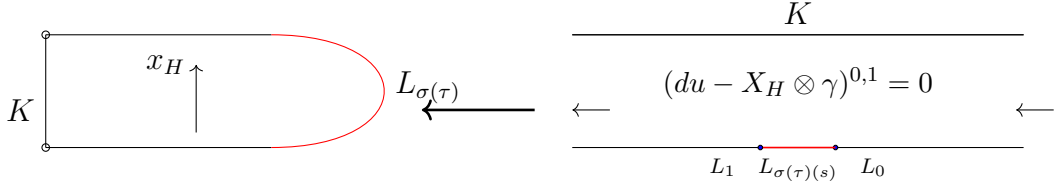

With the above geometric settings, we state the following $C^0$ estimate for moduli space of pseudo-holomorphic curves with moving Lagrangian boundary with Legendrian ends inside a Liouville manifold.
\begin{thm}\label{thm: C^0 bound on strip}
	Given a $2$-dimensional disk $\Sigma$ with punctures $\{ \xi_1,\xi_2 \}$ on the boundary, every element in the moduli space of pseudo-holomorphic curves in an open Liouville manifold $M$ from the domain $\Sigma$ and moving Lagrangian boundary conditions generated by the flow of a Hamiltonian with linear growth rate at the cylindrical ends has a priori $C^{0}$ bound. Namely, there is a choice of Hamiltonian data $H, J$ and closed $1$-form $\gamma$ on $\Sigma$ satisfying the conditions of \ref{eq:condition_on_u} such that all solutions of the inhomogeneous pseudo-holomorphic equation 
	\begin{equation}
	(du - X_H\otimes \gamma)^{0,1} = 0
	\end{equation}
	stay within a compact subset of $M$.
\end{thm}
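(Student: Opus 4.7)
The plan is to apply the weak maximum principle to $\rho := r \circ u$ on the preimage $\Sigma_{r_0} := u^{-1}(\partial M^{in}\times [r_0,\infty))$ for $r_0 \gg 1$, following the strategy pioneered in \cite{AbSe09} and \cite{Ab12} but with an extra step to handle the moving boundary. First I would choose $J$ to be of contact type on the cone, choose $\gamma$ with $\|\gamma\|_{C^0}$ small, and verify the interior subharmonic inequality: using $dr \circ J = -\theta$ and $H = c(s) r^2$, a direct substitution into $(du - X_H\otimes\gamma)^{0,1}=0$ yields, by the standard Abouzaid-Seidel computation, $\Delta \rho \geq 0$ wherever $u$ lies in the cone (the quadratic growth of $H$ is exactly what absorbs the contribution of $\gamma$ with a favorable sign). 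Hence $\rho$ attains its maximum on $\partial \Sigma_{r_0}$.

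Next I would handle the fixed portions of $\partial \Sigma$ — the upper boundary $u(s,1)\in K$ and the outer lower segments $u(s,0)\in L_0$ (for $s\gg 0$) and $L_1$ (for $s\ll 0$). Since these Lagrangians have Legendrian ends, they are of the form $\Lambda \times (0,\infty)$ on the cone and therefore invariant under the Liouville flow; consequently the Liouville vector field $Z = r\partial_r$ is tangent to them. A standard Hopf lemma computation then gives $\partial_\nu \rho \leq 0$ along these segments, ruling out a strict boundary maximum unless $\rho$ is constant. The asymptotic ends $|s|\to\infty$ are controlled by convergence to the fixed chords $x_0, x_1$, so the max cannot escape there either.

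The essential new step is the moving-boundary segment $t=0$, $s\in[0,1]$, where $u(s,0)\in L_s = \phi^s_{X_{H^l}}(L_0)$. Here I would apply the straightening trick: define $\tilde u(s,t) := \phi^{-s}_{X_{H^l}}(u(s,t))$ in a cutoff version (supported in a neighborhood of the moving segment), which pulls the moving boundary condition back to the fixed Lagrangian $L_0$. A direct calculation shows $\tilde u$ solves a modified Cauchy–Riemann equation whose Hamiltonian term has leading symbol $H - H^l$ and whose almost complex structure is $(\phi^s_{X_{H^l}})^* J$. The crucial observation, made precise by formulas \eqref{eqn:$X_{H_0}$}--\eqref{eqn:$X_{H^l}$}, is that $X_{H^l}$ preserves the radial coordinate $r$ on the cone: its $\partial_r$-component vanishes on $\partial M^{in}$ and is proportional to $r$, so that $\phi^s_{X_{H^l}}$ acts by rescaling along the Reeb–contact directions and fixes the level sets of $r$. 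Consequently $\tilde\rho := r\circ \tilde u = \rho$ pointwise, and the effective Hamiltonian $H - H^l = c(s) r^2 - r H_0$ still grows quadratically for $r$ large, so Steps 1--2 apply to $\tilde u$ with the (now fixed) boundary $L_0$, producing the desired bound on $\rho$.

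The main obstacle will be verifying that after straightening, the modified almost complex structure remains sufficiently close to contact type that the interior subharmonic inequality $\Delta\tilde\rho \geq 0$ and the boundary Hopf argument both survive; this hinges on the quadratic-over-linear domination $H \gg H^l$ at infinity, which provides the positive leading term needed to absorb the error introduced by pulling back $J$. A careful bookkeeping of these error terms, together with the preservation of $r$ by $\phi^s_{X_{H^l}}$, should then yield a uniform $C^0$-bound depending only on the input chords $x_0$, $x_1$, the compact set $M^{in}$, and the choice of $H_0$ generating the moving Lagrangian family.
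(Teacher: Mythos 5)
Your proposal hinges on the claim that the flow $\phi^s_{X_{H^l}}$ of the straightening Hamiltonian $H^l = r\cdot H_0$ preserves the radial coordinate $r$, so that $\tilde\rho := r\circ\tilde u = \rho$ pointwise. This is false, and formula \eqref{eqn:$X_{H^l}$} in the paper contradicts it directly: the $\partial_r$-component of $X_{H^l}$ is $-r\, dH_0(Y)$, where $Y$ is the Reeb vector field. Since $dH_0(Y)$ does not vanish in general (it does so only if $H_0$ is Reeb-invariant), $\phi^s_{X_{H^l}}$ changes $r$. The best you can say is that the rescaling factor is bounded for $s\in[0,1]$ since $dH_0(Y)$ is bounded on the compact $\partial M^{in}$; but then the identity $\tilde\rho=\rho$ that your argument relies on degrades to a comparison with error terms that you have not controlled, and the clean transfer to a fixed-boundary maximum principle no longer works as stated. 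A secondary issue is that if you really conjugate by the full flow (rather than a cutoff version), the upper boundary $u(s,1)\in K$ becomes a \emph{new} moving boundary $\tilde u(s,1)\in\phi^{-s}_{X_{H^l}}(K)$; with a cutoff version, you instead introduce $s,t$-dependent error terms that again break $\tilde\rho=\rho$.

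The paper takes a genuinely different route. Rather than straighten the moving boundary, it applies the Hopf maximum principle directly to $\log\rho - h_\mu$ for a carefully designed auxiliary function $h_\mu(s,t) = f(s)\bigl(\tfrac{t^2}{2}-t\bigr)\mu$ with $\mu = 2\max H_0$. On the moving segment $t=0$, the inner normal derivative of $\log\rho$ picks up the negative term $-H_0$ (coming from the decomposition $\partial_s u = a\partial_r + X_{H_0} + Z'$ of the boundary tangent vector), and $h_\mu$ is engineered so that its $t$-derivative cancels this: $(d^c\log\rho - d^c h_\mu)(\partial_s) = -H_0 + (1-t)\mu f(s) > 0$. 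In the interior, $\Delta(\log\rho - h_\mu)$ acquires a term $-\Delta h_\mu = -\mu$ that would spoil positivity; this is repaired by imposing $c'(s)$ sufficiently negative (i.e.\ the slope of the Hamiltonian $H = c(s)r^2$ \emph{decreases} across the moving-boundary interval), which supplies a compensating positive contribution $-2c'(s)\rho$. So the $C^0$ bound is purchased at the cost of a genuine constraint on the choice of Floer data, not merely a clever change of variables. The straightening idea does appear in the paper, but only as a secondary device to reduce to the fixed-boundary estimate of \cite{Ab12} when the maximum happens to lie on the fixed part $s<0$ of the lower boundary, not as the main mechanism. If you want to salvage your approach, you would need to track the multiplicative error $\exp\bigl(-\int_0^s dH_0(Y)\bigr)$ in $\tilde\rho$ and verify that the modified interior and boundary inequalities still close up, and separately handle the distortion of the $t=1$ boundary; this is not obviously easier than introducing $h_\mu$.
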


\begin{proof}
We use the Hopf maximum principle on a function that is monotone and bijective with $\rho$ to show that when the radial coordinate is too large, the maximum can't live on the moving part of the boundary of the domain $\Sigma$ nor the interior, thus forcing any solution of the equation to be within a chosen compact space inside $M$. The proof consists of showing non-existence of local maximum at the moving boundary and in the interior.



First, we work with an almost complex structure $J$ of contact type which satisfies Equation \eqref{eq:contact type} near the cylindrical ends and $\gamma = dt$:
\begin{align*} \partial_s \log(\rho )
&=\partial_s(\log(r \circ u))\\
&=\frac{dr}{r}(\del_s u)
= \frac{dr}{r} \circ(-J)(\partial_t u-  X_H)\\
&=\frac{\theta}{r}(\partial_t u-2c\rho X_\rho)
=\alpha(\partial_t u)-2c\rho\alpha(X_\rho)\\
&=\alpha(\partial_t u)-2c(s)\rho.
\end{align*}
The third equality comes from the $J$-holomorphic curve equation; the fourth equality comes from the condition \eqref{eq:contact type} and the last equality is due to the fact that $X_\rho= Y$.
Similarly,
\begin{align*}
\partial_t \log(\rho)&=
\frac{dr}{r}\cdot (-J) \cdot J \cdot (\partial_t u)
=\alpha(J \partial_t u)\\
&=\alpha(-\partial_s u + J cX_\rho)
=\alpha(-\partial_s u)
\end{align*} 

Since $H_0$ is smooth on the compact set $\del M^{in}$ we have a bound on $H_0$, let $\mu$ denote twice the maximum value, 
\begin{equation}\label{eqn: $mu$}
\mu:= 2\cdot \text{max}H_0.
\end{equation}
Consider the following auxiliary function:
\begin{equation}\label{eq:auxilary function}
h_\mu(s,t)=
\begin{cases}
e^{\frac{-s^2}{1-s^2}}\cdot (\frac{t^2}{2} -t )\mu 			& s\in(-1,0)\\
(\frac{t^2}{2}-t)\mu			& s\in [0,1]\\
e^{\frac{-(s-2)^2}{1-(s-2)^2}}\cdot (\frac{t^2}{2}-t) \mu  & s \in (1,2) \\
0				&otherwise\\
\end{cases}
\end{equation}
\begin{rem}
	The above construction has 2 components, the first part is $(\frac{t^2}{2} -t )\mu$ with the property that  $d^c h(\del_s)\geq \mu$ on all of the lower boundary of $\Sigma$ with $d^c h(\del_s)$ equals $\mu$ when $s\in[0,1]$ (the portion of moving boundary illustrated in red in figure (\ref{fig:Maximum_principle_for_glued_disc})) and $d^c h(\del_s)$ vanishes on the upper boundary of $\Sigma$ which, is mapped into a constant Lagrangian $K$.  The second part is just a mollifier function so that $h_\mu(s,t)$ vanishes near infinite ends.
\end{rem}
We want to show that there is a $C^0$-bound on the function $\log(\rho)-h_\mu$ to give a $C^0$-bound on $\rho$ since $h_\mu$ itself is bounded above on $\Sigma$, thus an upper bound on $\log(\rho)-h_\mu$ would imply an upper bound on $\rho$; to this end we apply the Hopf maximum principle to $\log(\rho)-h_\mu$. 
To this end we present the following two lemmas:

\begin{lem}\label{lemma: max not on boundary}
	Suppose there is an auxiliary function $h_\mu$ defined in Equation \eqref{eq:auxilary function} and a pseudo-holomorphic strip $u: \Sigma \to M$ with moving boundary Lagrangian conditions on a Liouville domain $M$ satisfying the condition in Theorem \ref{thm: C^0 bound on strip}. When the radial coordinate $r$ is large enough, the maximum value of $\log(\rho) - h_\mu$ does not lie on the boundary of the domain $\Sigma$.
\end{lem}

\begin{lem}\label{lemma: max not in interior}
	Suppose we are equipped with a pseudo-holomorphic strip as in Lemma \ref{lemma: max not on boundary}, when $r$ is large enough, the maximum value of $\log(\rho) - h_\mu$ does not lie on the interior of the domain $\Sigma$. 
\end{lem}
Thus, from the above two lemmas, there is an upper bound on the radial coordinate of the image of the strip and stays within a compact set.
 
\begin{proof}[Proof of Lemma \ref{lemma: max not on boundary}]
We first look at the partial derivatives of $h_\mu$ with respect to $s$ and $t$.
 \[
\del_t h_\mu=
\begin{cases}
e^{\frac{-s^2}{1-s^2}}\cdot (t-1)\mu 			& s\in(-1,0)\\
(t-1)\mu			& s\in [0,1]\\
e^{\frac{-(s-2)^2}{1-(s-2)^2}}\cdot (t-1)\mu  & s \in (1,2) \\
0				&otherwise\\
\end{cases}
\]

\[
\del_s h_\mu=
\begin{cases}
e^{\frac{-s^2}{1-s^2}}\cdot \frac{-2s}{(1-s^2)^2}(\frac{t^2}{2}-t)\mu 			& s\in(-1,0)\\
0			& s\in [0,1]\\
e^{\frac{-(s-2)^2}{1-(s-2)^2}}\cdot \frac{-2(s-2)}{(1-(s-2)^2)^2}(\frac{t^2}{2}-t) \mu  & s \in (1,2) \\
0				&otherwise\\
\end{cases}
\]
Thus we obtain the following equations:
\begin{equation}
\begin{split}
d^c \log(\rho )&=d\log(\rho)\circ j  \\
&=-\del_s \log(\rho) dt +\del_t \log(\rho)ds\\
&=(-\alpha(\del_t u)+2c(s)\rho) dt -\alpha(\del_s u) ds\\
&=-u^*\alpha +2c(s)\rho \cdot dt  \label{eqn: dchmu}\\
d^ch_\mu &=-\del_s h_\mu dt +\partial_t h_\mu ds	
\end{split}
\end{equation}
Let $\xi$ denote $\del_s u(s, 0)$, the vector field that is mapped to $M $ from 
the lower boundary of $\Sigma$, note $\xi$ is tangent to the Hamiltonian flow of $L_s$ when $s\in[0,1]$. We may decompose $\xi$ into three components:
\begin{equation}\label{eqn:boundary_tangent_vector}
\xi=a\del_r+X_{H_0}+Z',
\end{equation}
where the first component is a radial vector, $X_{H_0}$ is the Hamiltonian vector field on $\del M^{in}$ generated by $H_0$ and $Z'$ is a vector in the tangent space of the Legendrian given by the intersection of the moving Lagrangian and $\del M^{in}$ at $u(s, 0)$, namely $Z' \in T \Lambda_s \subseteq \kappa=\ker(\theta)$. Recall from  equation (\ref{eqn:$X_{H_0}$}) that $X_{H_0}=H_0\cdot Y +Z$ where $Z\in \kappa$ also. 

From Equations (\ref{eqn: dchmu}) and (\ref{eqn:boundary_tangent_vector}), it is easy to see:
\begin{align*}
(d^c\log\rho-d^ch_\mu)(\partial_s)
&= -\alpha(\xi)+ 2c(s) dt (\partial_s) -\del_s h_\mu dt(\del_s) -\del_t h_\mu\\
&=-\alpha(a\cdot \partial_r+ X_{H_0}+Z')+0 + 0-\del_t h_\mu\\
&=-\alpha(H_0Y) -\del_t h_\mu\\
&=-H_0-\del_t h_\mu=-H_0+(1-t)\mu\cdot f(s).
\end{align*}
Note $0\leq f(s) \leq 1$, and  $H_0(q)$ is strictly less than $\mu$ for any $q \in \del M^{in}$ by definition of $\mu$ in equation (\ref{eqn: $mu$}), we get $(d^c\log \rho-d^ch_\mu)(\partial_s) > 0$ when $t=0$;  in other word, the gradient of $\log\rho - h_\mu$ paired with a inner normal vector on the lower part of $\del \Sigma$ where the Lagrangian boundary moves from $L_0$ to $L_1$ is positive.

If the maximum value appears on $(s,0)$ where $s \notin [0,1]$, say $s <0$. Then the maximum value appears on the fixed Lagrangian $L_0$. We may apply the inverse Hamiltonian flow of $H^l$ on the lower boundary to obtain a strip with constant Lagrangian boundary condition:
 \begin{equation}
 \tilde{u}(s,t) = 
 \begin{cases}
 u(s,t) \text{ when $s <0$, }\\
 \phi_{-H^l}^{(1-t)s}(u(s,t)) \text{ $s \in [0,1]$,}\\
 \phi_{-H^l}^{1-t}(u(s,t)) \text{ when $s >1$.}
 \end{cases}
 \end{equation}
 From the $C^0$-estimate on the radial coordinate derived by energy estimates given by Lemma 7.2 in \cite{Ab12}, we know that the maximum value of $\log(\rho) - h_\mu$is bounded a priori.
 
When $t=1$, if $K$ is compact, $C^0$-bound of $\rho$ is automatic; otherwise $K$ is a Lagrangian with Legendrian boundary near the cylindrical ends, thus $(d^c\log \rho-d^ch_\mu)(-\partial_s)\equiv0$ as $u(s,1)\in K$ and $K\cap \del M \times [1,\infty)=\Lambda\times[1,\infty) $ for some $\Lambda$ Legendrian on $\del M$. Since $ H_0 < \mu$, the maximum of $\log\rho-h_\mu$ can't occur on the boundary of $\Sigma$ by the Hopf Lemma, which claims that if the maximum occurs at the boundary for $\log\rho - h_\mu$, then the partial derivative on the outward normal direction would be strictly positive.
\end{proof}

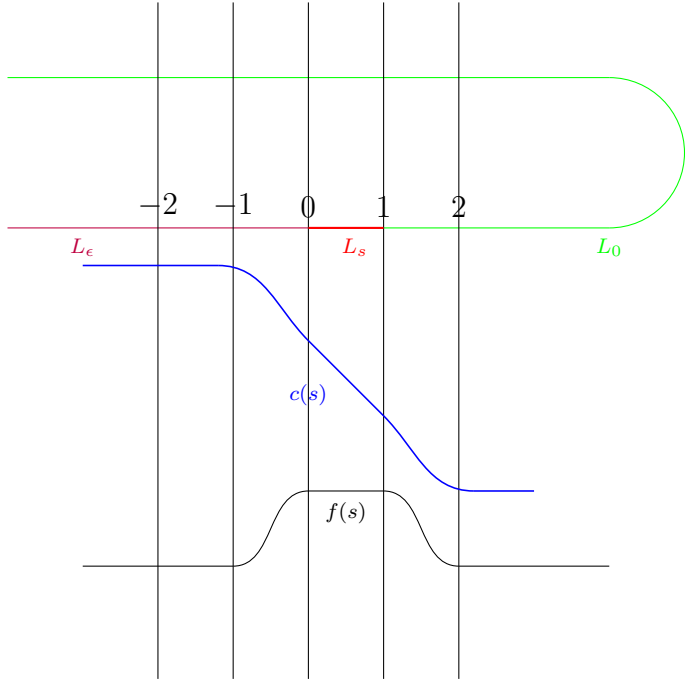
\begin{figure}\label{fig: function_requirements}
	\begin{tikzpicture}
	\draw [green] (-4, 2) to (4,2);
	\draw [green](4,0) arc (-90:90: 1);
	\draw [green](4,0) to (1,0);
	\draw [purple] (1/4,0) to (-4,0);
	\coordinate [label= below: \textcolor{green}{\tiny{$L_0$}}] () at (4,0);
	\coordinate [label= below: \textcolor{purple}{\tiny{$L_\epsilon$}}] () at (-3,0);
	
	\coordinate[label = above: $-2$]() at (-2, 0);
	\draw (-2, -6) to (-2, 3);
	\coordinate[label = above: $-1$]() at (-1, 0);
	\draw (-1, -6) to (-1, 3);
	\coordinate[label = above: $0$]() at (0, 0);
	\draw (0,-6) to (0,3);
	\coordinate[label = above: $1$]() at (1, 0);
	\draw (1,-6) to (1,3);
	\coordinate[label = above: $2$]() at (2, 0);
	\draw (2,-6) to (2, 3);
	\draw [red, line width=0.8] (1,0) to (0, 0);
	\coordinate[label = below: \textcolor{red}{\tiny{$L_s$} } ] () at (2/3, 0);
	\begin{scope}[shift ={(0,-.5)}]

	\draw [blue, line width = 0.6] (3,-3) to (2.2,-3);
	\draw [blue,line width = 0.6] (2.2,-3) to [out = 180, in = -45] (1,-2) ;
	\draw [blue, line width = 0.6] (1,-2) to (0,-1);
	\draw[blue, line width = 0.6] (0,-1) [out = 135, in = 0] to (-1.2, 0);
	\draw [blue, line width = 0.6] (-1.2,0) to (-3, 0);
	\coordinate[label = above: \textcolor{blue}{\tiny{$c(s)$}}] () at (0,-2);
	\end{scope}
	
	\begin{scope}[shift ={(0,-4.5)}]
	\draw (-3,0) to (-1,0);
	\draw (-1,0) to [out = 0, in = 180] (0,1);
	\draw (0,1) to (1,1);
	\draw (1,1) to [out =0, in = 180] (2,0);
	\draw (2,0) to (4,0);
	\coordinate [label = below:\tiny{$f(s)$}] () at (0.5, 1);
	
	\end{scope}

	\end{tikzpicture}
	\caption{$H=c(s)\cdot r^2$ and the bump function.}
\end{figure}

\begin{proof}[Proof of Lemma \ref{lemma: max not in interior}]
We now use the Laplacian to show that the maximum can't lie in the interior:
\begin{equation}\label{eq:Laplacian}
\begin{split}
\Delta(\log\rho-h_\mu)&=-dd^c(\log\rho-h_\mu)(\del_s, \del_t)
=u^*d\alpha -\partial_s[2c(s)\rho] ds\wedge dt-\Delta h_\mu \\
&=d\alpha(\del_su, \del_t u)-2c'(s)\rho - 2c(s)\del_s\rho-\Delta h_\mu \\
&=d\alpha(\del_su, J\del_s u+ cX_\rho)-2c'(s)\rho -2c(s)\del_s\rho - \Delta h_\mu \\
&=d\alpha(\del_su, J\del_s u)-2c'(s)\rho- 2c(s)\del_s\rho - \Delta h_\mu
\end{split}
\end{equation}
Note we had a decomposition of the tangent space
\begin{equation}
	T_{(r,y)}M = \kappa \oplus \R \cdot Y \oplus \R \del_r,
\end{equation}
 thus if we let $\del_s u=a\cdot Y+ b\del_r +Z$ where $Z \in \kappa$ as before. Then $J \del_s u= -ar\del_r+\frac{b}{r}\cdot Y+J \cdot Z$. 
Also $\omega=d(r\cdot \alpha)=dr\wedge \alpha+ rd\alpha$.
Therefore 
\begin{equation}
\begin{split}
d\alpha(\del_s u, J \del_s u)&=d\alpha(a\cdot Y+ b\del_r +Z, -ar\del_r+\frac{b}{r}\cdot Y+J \cdot Z)\\
&=d\alpha(Z, J\cdot Z)=\frac{(\omega-dr\wedge \alpha)(Z, J\cdot Z) }{r}\\
&=\frac{\omega(Z, J\cdot Z)}{r}=\frac{|Z|^2_J}{r}
\end{split}
\end{equation}
From the above equation we get 
\begin{equation}
\begin{split}
\Delta(\log\rho-h_\mu)&=d\alpha(\del_su, J\del_s u)-2 c'(s) \rho - 2c(s)\del_s\rho -\Delta h_\mu\\
&=\frac{|Z|_J^2}{r}-2c'(s)\rho- 2c(s)\del_s\rho -  \Delta h_\mu
\end{split}
\end{equation}
We notice that in applying the Hopf maximum principle, we can ignore the term $\del_s \rho$ at the maximum point. Therefore we only care about $\frac{|Z|_J^2}{r}- 2c'(s)\rho - \Delta h_\mu$.

When $s\in[0,1]$, $\Delta h_\mu=\mu$, thus the above term becomes 
\begin{equation*}
	\frac{|Z|^2}{r}-2c'(s) \rho-\mu.
\end{equation*} 
When $c'(s)$ is very negative such that $2c'(s) +\mu < 0$, if $\rho \geq 1$, the maximum of $\log\rho-h_\mu$ can't be in the interior, since in that case at the interior maximum point,$\Delta(\log\rho - h_\mu) <0$, yet $0> \Delta(\log\rho-h_\mu)+2c'(s)+\mu=|Z|^2\geq 0$ a contradiction. Thus $\log\rho - h_\mu$ on the image of the pseudo holomorphic strip lies inside $M^{in}$.  

When $s \in [-1,0]$,
\begin{equation*} 
\Delta (\log\rho-h)= |Z|^2-c'(s)-(1+\frac{4s^2-2(1-s^2)^2+8s^2(s^2-1)}{(1-s^2)^4}(\frac{t^2}{2}-t) ) e^{\frac{-s^2}{1-s^2}}\mu.
\end{equation*}
Some Calculation shows that $\frac{4s^2-2(1-s^2)^2+8s^2(s^2-1)}{(1-s^2)^4}\cdot e^{\frac{-s^2}{1-s^2}} \in[-200, 200]$ for $s\in[-1,0]$; while $-\frac{1}{2}\leq(\frac{t^2}{2}-t))\leq0$.
Similarly for $s\in [1,2]$.

Thus if we impose the condition $c'(s)\leq -100 |\text{max}_QH_0|$, we get upper bound of $\rho$ on all of $\Sigma$. We denote the negative constant on the right hand side of our inequality by $-A$.

Therefore, we may pick a constant $C$ large enough such that $C>2A$.

Let  $f(s)$ be a bump function that vanishes outside $[-1, 2]$ and is equal to $1$ between $0$ and $1$ as shown in Figure (\ref{fig: function_requirements}).
For example we can require $f(s)$ to be:
\begin{equation}
f(s) = 
\begin{cases}
0 & s\in (-\infty, -1]\\
\exp(-\frac{s^2}{1-s^2}) & s \in (-1,0) \\
1 & s \in [0,1]\\
\exp(-\frac{(s-2)^2}{1-(s-2)^2} ) & s\in [1,2]\\
0 & s \in [2,\infty],\\
\end{cases}
\end{equation}
which happens to be the same function in the definition of $h_\mu$.
Thus we may define the Hamiltonian $H_s$ to be any smooth function whose restriction to the cylindrical end $\del M^{in} \times (0,\infty)$ is of the following form:
\begin{equation}\label{eq:quadratic_Hamiltonian}
	H_s(q, r) = (C -A \cdot s) r^2 \cdot f(s),
\end{equation} 
whenever $r \in [1, \infty)$.
Namely the function $c(s)$ in condition (\ref{eq:class-of-h}) is given by $(C- A\cdot s^2) \cdot f(s)$, thus $c'(s)$ would be $-A$ when $s \in [0,1]$ and vanishes outside $[-1,2]$. This definition makes sure that for the moving Lagrangian boundary, namely when $s \in [0,1]$, the Hamiltonian $H$ on cylindrical end is of the form $c(s)\cdot r^2$ for $c'(s)$ sufficiently negative.

The Hamiltonian defined in equation (\ref{eq:quadratic_Hamiltonian}) satisfies the requirement of condition (\ref{eq:class-of-h}) and the previous calculations would follow through.
\end{proof}

In this way we have shown that the maximum of $\log(\rho) - h_\mu$ on the pseudo-holomorphic strip can't lie on the boundary of $\Sigma$ for large enough $r$ and can't lie in the interior of $\Sigma$ either whenever the radial coordinate is larger than $1$. Since when $s \to \pm \infty$, the ends of the strip converges to some intersection points of different (constant) Lagrangians with Legendrian boundary and thus they have to be within $M^{in}$. We get a $C^0$ bound that depends on $H_0$ on the radial coordinate of any holomorphic strip satisfying equation (\ref{eq:condition_on_u}).

\end{proof}

\section{Acknowledgment}
	I would like to thank My advisor Mohammed Abouzaid, who pointed this particular direction of research to me and offered great guidance all the way along. I would also like to thank Pei-ken Hung, who spent hours explaining the analytical details; Dusa McDuff, Nate Bottman, Yoel Groman, Zack Sylvan, Kyler Siegel, Oleg Lazerav, Simon Brendle, Semon Rezchikov offered great insights during helpful discussions. I would also like to thank the organizers of the Kylerec workshop, and especially Thomas Kragh, for giving me a special chance to learn the current techniques and present closely related ideas in nearby Lagrangian conjecture.

\doublespacing
\bibliographystyle{abbrv}
\bibliography{paper}

\begin{thebibliography}{10}

\bibitem{AbSc10}
A.~Abbondandolo and M.~Schwarz.
\newblock Floer homology of cotangent bundles and the loop product.
\newblock {\em Geom. Topol.}, 14(3):1569--1722, 2010.

\bibitem{Abcotangen}
M.~Abouzaid.
\newblock A geometric criterion for generating the fukaya category.
\newblock volume 112, pages 191--240. Springer-Verlag, 2010.

\bibitem{Ab08}
M.~Abouzaid.
\newblock A geometric criterion for generating the fukaya category.
\newblock {\em Publ.math.IHES.}, pages 112--191, 2010.

\bibitem{Ab12}
M.~Abouzaid.
\newblock On the wrapped fukaya category and based loops.
\newblock {\em J. Symplectic Geom.}, 10(1):27--79, 2012.

\bibitem{AbSe09}
M.~Abouzaid and P.~Seidel.
\newblock An open string analogue of viterbo functoriality.
\newblock {\em Geom. Topol.}, 14(2):627--718, 2010.

\bibitem{AD}
M.~Audin and M.~Damian.
\newblock {\em Morse Theory and Floer Homology}.
\newblock Springer-Verlag, 2014.

\bibitem{Fl88}
A.~Floer.
\newblock {The unregularized gradient flow of the symplectic action}.
\newblock {\em Commun. Pure Appl. Math.}, 41(6):775--813, 1988.

\bibitem{FHS}
A.~Floer, H.~Hofer, and D.~Salamon.
\newblock Transversality in elliptic morse theory for the symplectic action.
\newblock {\em Duke Math. J.}, 80(1):251--292, 10 1995.

\bibitem{FOOO}
K.~Fukaya, Y.~Oh, H.~Ohta, and K.~Ono.
\newblock {\em Lagrangian Intersection Floer Theory: Anomaly and Obstruction}.
\newblock Number pt. 1 in AMS/IP studies in advanced mathematics. American
  Mathematical Society, 2009.

\bibitem{GPS03}
S.~Ganatra, J.~Pardon, and V.~Shende.
\newblock Microlocal morse theory of wrapped fukaya categories.

\bibitem{GPS02}
S.~Ganatra, J.~Pardon, and V.~Shende.
\newblock Sectorial descent for wrapped fukaya categories.

\bibitem{GPS01}
S.~Ganatra, J.~Pardon, and V.~Shende.
\newblock Covariantly functorial floer theory on liouville sectors.
\newblock {\em Publications math\'ematiques de l'IH\'ES}, 06 2017.

\bibitem{Gromov85}
Gromov.
\newblock Pseudo holomorphic curves in symplectic manifolds.
\newblock {\em Inventiones mathematicae}, 82:307--348, 1985.

\bibitem{kirby}
L.~Kirby, R.and~Taylor.
\newblock Pin structures on low-dimensional manifolds.
\newblock pages 177--242. Cambridge University Press, Cambridge, 1991.

\bibitem{Kragh}
T.~J. Kragh.
\newblock Homotopy equivalence of nearby lagrangians and the serre spectral
  sequence.
\newblock {\em Mathematische Annalen}, 368:945--970, 2015.

\bibitem{Mas}
W.~Massey.
\newblock {\em A basic course in algebraic topology}, volume 127.
\newblock Springer-Verlag, New York., 1991.

\bibitem{MS04}
D.~McDuff and D.~Salamon.
\newblock J-holomorphic curves and symplectic topology.
\newblock 52, 01 2004.

\bibitem{MS95}
D.~McDuff and D.~Salamon.
\newblock {\em Introduction to symplectic topology, third edition}.
\newblock Clarendon Press, 01 2017.

\bibitem{seidel-book}
P.~Seidel.
\newblock {\em Fukaya categories and Picard-Lefschetz theory}.
\newblock Z\"urich, Switzerland : European Mathematical Society, 2008.

\bibitem{Vit94}
C.~Viterbo.
\newblock Generating functions in symplectic topology and applications.
\newblock volume~1, pages 537--547. Birkhauser, Zurich, 1995.

\bibitem{Vit96}
C.~Viterbo.
\newblock Functors and computations in floer homology with applications ii.
\newblock 2003.

\end{thebibliography}

\end{document}